\documentclass[11pt]{article}
\usepackage{inputenc}
\usepackage[colorlinks=true, bookmarksnumbered=true, bookmarksopen=true,
bookmarksopenlevel=3, pdfstartview=FitH, linkcolor=cyan, pdfmenubar=true,
pdftoolbar=true, bookmarks=true,citecolor=cyan, urlcolor=magenta,
filecolor=cyan,menucolor=black,plainpages=false,pdfpagelabels]{hyperref}
\usepackage[lmargin=2cm,tmargin=2cm,bmargin=2cm,rmargin=2cm]{geometry}
\usepackage{lineno,hyperref}
\usepackage{amsfonts}
\usepackage{amsmath,amssymb}
\usepackage{mathtools}
\usepackage{color}
\usepackage{amsmath,amssymb}
\usepackage{amsthm}
\usepackage{amsfonts}
\usepackage{tikz}
\usepackage{pstricks}
\usepackage{pstricks-add}
\usepackage{pst-all,pst-math,pst-plot}
\usepackage{textcomp}
\usepackage{graphicx,color}
\usepackage{graphicx}
\usepackage{fancyhdr,ifthen}

\usetikzlibrary{matrix}

\newtheorem{theorem}{Theorem}[section]
\newtheorem{lemma}[theorem]{Lemma}
\newtheorem{corollary}[theorem]{Corollary}
\newtheorem{proposition}[theorem]{Proposition}

\newtheorem{remark}[theorem]{Remark}

\numberwithin{equation}{section}

\newcommand{\R}{\mathbb{R}}

\allowdisplaybreaks

\begin{document}

\title{Remarks on a multi-dimensional transport equation with \\nonlocal velocity\\
\vspace{0.4cm}}
\author{
{Wanwan Zhang$^1$} {\thanks{{Corresponding author. }\newline
{E-mail addresses: zhangww@jxnu.edu.cn (W. W. Zhang).}
}}\\
\\
{\small $^{1}$ School of Mathematics and Statistics, Jiangxi Normal University, } \\
{\small {Nanchang, 330022, Jiangxi, P. R. China.}}
}
\date{}
\maketitle

\begin{abstract}
In this paper, we revisit a multi-dimensional nonlocal transport equation of the form
\begin{eqnarray*}
\partial_t\rho+g\mathcal{R}_a\rho\cdot \nabla\rho= 0,~\rho(\cdot,0)=\rho_{0},
\end{eqnarray*}
where the transform $\mathcal{R}_a$ is defined by
\begin{eqnarray*}
\mathcal{R}_a\rho(x,t)=\frac{\Gamma(\frac{n+1}{2})}{\pi^{\frac{n+1}{2}}}P.V.\int\limits_{\mathbb{R}^n}\Big(\frac{x-y}{|x-y|^{n+1}}-\frac{x-y}{(|x-y|^2+a^2)^{\frac{n+1}{2}}}\Big)\rho(y,t)dy.
\end{eqnarray*}
This model was first introduced in [W. Zhang, J. Differential Equations 467 (2026) 114275] as a natural generalization of the well-known Kiselev-Sarsam equation proposed in [A. Kiselve and N.A. Sarsam, Nonlinearity 38 (2025) 055018].
By using an improved pointwise inequality for the transform $\mathcal{R}_a$, we obtain a better and more natural version of a nonlinear integral inequality proved in Zhang \cite{[Zhang-JDE2026]} for the transform $\mathcal{R}_a$. In addition, we also establish a new weighted nonlinear inequality for the transform $\mathcal{R}_a$ implying the finite-time blow-up for a class of smooth radial initial data.
\

\vspace{0.2cm}

\noindent\textbf{Keywords:} Kiselev-Sarsam equation; Nonlinear integral inequalities; Finite-time blow-up; IPM equation

\vspace{0.2cm}

\noindent\textbf{AMS MSC 2020:} 35Q35; 76S05; 76B03
\end{abstract}

\tableofcontents
\
\setlength{\headheight}{14.5pt} \setlength{\headwidth}{\textwidth} %
\pagestyle{fancy} \fancyhf{} \renewcommand{\headrulewidth}{0pt}
\chead{\ifthenelse{\isodd{\value{page}}}{A multi-dimensional transport with nonlocal velocity}{Wanwan Zhang}} \rhead{\thepage}
\newpage
\section{Introduction and notations}
\hskip\parindent
In this paper, we study a multi-dimensional nonlocal active scalar equation of the form
\begin{align}\label{M-KS}
\begin{cases}
\partial_t\rho+g\mathcal{R}_a\rho\cdot \nabla\rho= 0, &\, (x,t)\in \mathbb{R}^n\times\mathbb{R}_+,\\
\rho(x,0)=\rho_{0}(x),&\, x\in \mathbb{R}^n,
\end{cases}
\end{align}
which was first introduced in \cite{[Zhang-JDE2026]}.
Here $a,g>0$ are fixed constants, $n\geq2$ is the space dimension, and
the transform $\mathcal{R}_a$ is defined by
\begin{eqnarray*}
\mathcal{R}_a\rho(x,t)=\frac{\Gamma(\frac{n+1}{2})}{\pi^{\frac{n+1}{2}}}P.V.\int\limits_{\mathbb{R}^n}\Big(\frac{x-y}{|x-y|^{n+1}}-\frac{x-y}{(|x-y|^2+a^2)^{\frac{n+1}{2}}}\Big)\rho(y,t)dy.
\end{eqnarray*}
The one-dimensional version of the equation \eqref{M-KS} is the well-known Kiselev-Sarsam equation given by
\begin{align}\label{KS}
\begin{cases}
\partial_t\rho+g\mathcal{H}_a\rho\partial_x\rho= 0, &\, (x,t)\in \mathbb{R}\times\mathbb{R}_+,\\
\rho(x,0)=\rho_{0}(x),&\, x\in \mathbb{R},
\end{cases}
\end{align}
where the transform $\mathcal{H}_a$ is defined by
\begin{eqnarray*}
\mathcal{H}_a\rho(x,t)
=\frac{1}{\pi}P.V.\int\limits_{\mathbb{R}}\frac{a^2\rho(y,t)}{(x-y)((x-y)^2+a^2)}dy.
\end{eqnarray*}
The one-dimensional nonlocal transport equation \eqref{KS} was introduced by Kiselev and Sarsam in \cite{[Kiselev-Sarsam]} as a one-dimensional model analogy for the two-dimensional incompressible porous media (IPM) equation given by
\begin{align*}
\begin{cases}
\partial_t\rho+u\cdot\nabla\rho= 0, &\, (x,t)\in \mathbb{R}^2\times\mathbb{R}_+,\\
u=-\nabla P-(0,g\rho),~\nabla\cdot u=0,\\
\rho(x,0)=\rho_{0}(x),&\, x\in\mathbb{R}^2,
\end{cases}
\end{align*}
which models the transport of a scalar density $\rho(x,t)$ by an incompressible fluid velocity field $u(x,t)$ under the effects of Darcy's law and gravity. Here $P=P(x,t)$ is the scalar pressure, $g>0$ is the constant of gravitational acceleration, and $\rho_{0}=\rho_{0}(x)$ is the initial density.
Here we briefly review some recent results for the IPM equation.
The local well-posedness has been established in various function spaces (see \cite{[Cordoba-Gancedo-Orive]} and \cite{[Constantin]}).
Bianchini, C\'{o}rdoba and Martn\'{i}nez-Zoroa \cite{[Bianchini-Cordoba-Martninez]} proved both ill-posedness for IPM and strong ill-posedness for perturbations of a spectrally stable steady state at the critical regularity.
Kiselev and Yao \cite{[Kiselev-Yao]} studied the small scale formation for the IPM equation provided global-in-time existence.
The question of finite-time singularity versus global regularity for IPM on the whole plane is still open.
There have been many results towards this problem.
C\'{o}rdoba and Mart\'{i}nez-Zoroa \cite{[Cordoba-Martninez-Zoroa]} showed the finite time singularities for the IPM equation with a smooth source.
Zlato\v{s} \cite{[Zlatos-Local-regularity],[Zlatos-singularity-formation]} proved the finite-time singularity formation of the Muskat equation on the half-plane for  smooth initial data.
Very recently, Dembski \cite{[Dembski]} proved the finite-time singularity of Lipschitz continuous solutions to the IPM equation vanishing on the boundary of the domains arbitrarily close to the half-plane.

We proceed to recall the known results on the Kiselev-Sarsam equation and related models.
Kiselev and Sarsam \cite{[Kiselev-Sarsam]} derived the one-dimensional model  \eqref{KS} from the IPM equation in the periodic half-plane and also remarked how the 1-D IPM  equation \eqref{KS} parallels the well-known C\'{o}rdoba-C\'{o}rdoba-Fontelos (CCF) equation
\begin{align}\label{CCF}
\begin{cases}
\partial_t\rho-\mathcal{H}\rho\partial_x\rho=0, &\, (x,t)\in \mathbb{R}\times\mathbb{R}_+,\\
\rho(x,0)=\rho_{0}(x),&\, x\in \mathbb{R},
\end{cases}
\end{align}
where $\mathcal{H}$ is the Hilbert transform defined by
\begin{eqnarray*}
\mathcal{H}\rho(x,t)
=\frac{1}{\pi}P.V.\int\limits_{\mathbb{R}}\frac{\rho(y,t)}{x-y}dy.
\end{eqnarray*}
The multi-dimensional generalization of the CCF equation \eqref{CCF} given by
\begin{align}\label{M-CCF}
\begin{cases}
\partial_t\rho-\mathcal{R}\rho\cdot\nabla\rho= 0, &\, (x,t)\in \mathbb{R}^n\times\mathbb{R}_+,\\
\rho(x,0)=\rho_{0}(x),&\, x\in \mathbb{R}^n,
\end{cases}
\end{align}
was first investigated in \cite{[Balodis-Cordoba]} by Balodis and C\'{o}rdoba.
Here $\mathcal{R}$ is the Riesz transform defined by
\begin{eqnarray*}
\mathcal{R}\rho(x,t)=\frac{\Gamma(\frac{n+1}{2})}{\pi^{\frac{n+1}{2}}}P.V.\int\limits_{\mathbb{R}^n}\frac{x-y}{|x-y|^{n+1}}\rho(y,t)dy.
\end{eqnarray*}

Since the equations \eqref{M-KS} and \eqref{KS} are intimately related to the equations \eqref{CCF} and \eqref{M-CCF}, we first review the existing results on the equations \eqref{CCF} and \eqref{M-CCF} for clear motivation of the present work.
C\'{o}rdoba-C\'{o}rdoba-Fontelos \cite{[Cordoba-Cordoba-Fontelos05]} first showed the finite-time singularity formation of solutions to \eqref{CCF} for a class of smooth even initial data.
Specifically, the blow-up proof in \cite{[Cordoba-Cordoba-Fontelos05]} was based on an ingenious inequality for the Hilbert transform $\mathcal{H}$: for any $-1<\delta<1$ and any even bounded smooth function $f$ defined on $\mathbb{R}$,
\begin{eqnarray}\label{CCF-inequality}
-\int\limits^\infty_0\frac{\mathcal{H} f(x) f'(x)}{x^{1+\delta}}dx
\geq
C_{\delta}\int\limits^\infty_0\frac{(f(x))^2}{x^{2+\delta}}dx,
\end{eqnarray}
where  $C_\delta>0$ is a constant depending only on $\delta$.  The proof of \eqref{CCF-inequality} in \cite{[Cordoba-Cordoba-Fontelos05]} was based on the Meillin transform and complex analysis.
Based on completely real variable arguments, Kiselev \cite{[Kiselev]} proved that more general inequality below
\begin{eqnarray}\label{nonlinear-inequality-H-Kiselev}
-\int\limits^1_0\frac{\mathcal{H} f(x) f'(x)(f(x))^{p-1}}{x^\delta}dx
\geq
C_{p,\delta}\int\limits^1_0\frac{(f(x))^{p+1}}{x^{1+\delta}}dx
\end{eqnarray}
holds true for any $p\geq1,\delta>0$ and any even bounded $C^1$ function $f$ with $f(0)=0$ and $f'\geq0$ on $(0,\infty)$.
Li and Rodrigo \cite{[Li-Rodrigo20]} established a pointwise inequality for the Hilbert transform $\mathcal{H}$ acting on even and non-decreasing (on $(0,\infty)$) function $f$ on $\mathbb{R}$:
\begin{eqnarray*}
-\mathcal{H}f(x)\geq\frac{2}{\pi }\cdot\frac{1}{x}\int\limits^x_0(f(x)-f(y))dy,
\end{eqnarray*}
and then gave a short proof of the inequalities \eqref{CCF-inequality} and \eqref{nonlinear-inequality-H-Kiselev} by a simple application of Hardy's inequality.
The authors \cite{[Li-Rodrigo20]} also established the integral inequality below for the Hilbert transform $\mathcal{H}$ : for any even Schwartz function $f$,
\begin{eqnarray}\label{exponential-Hilbert}
-\int\limits^\infty_0f'(x)\mathcal{H}f(x)\frac{e^{-x}}{x}dx
\geq \frac{1}{2\pi}
\int\limits\limits^\infty_0\frac{(f(0)-f(x))^2}{x^2}dx
-1000\|f\|^2_{L^\infty}.
\end{eqnarray}
We also refer the readers to \cite{[Castro-Cordoba],[Cordoba-Cordoba-Fontelos06],[Silvestre-Vicol]}
for different and elegant blow-up proofs for the CCF equation \eqref{CCF}.

For the multi-dimensional CCF equation \eqref{M-CCF},
Balodis and C\'{o}rdoba \cite{[Balodis-Cordoba]}
proved the finite-time blow-up of smooth solutions to \eqref{M-CCF} by applying a multi-dimensional version of the integral inequality \eqref{CCF-inequality}: for any $C_n<\delta<1$ and a suitable smooth function $f$ with constant sign vanishing at the origin,
\begin{eqnarray*}
-\int\limits_{\R^n}\frac{(\mathcal{R} f(x)-\mathcal{R} f(0))\cdot\nabla f(x)}{|x|^{n+\delta}}dx
\geq
C_n\int\limits_{\R^n}\frac{(f(x))^2}{|x|^{n+1+\delta}}dx,
\end{eqnarray*}
where $0<C_n<1$ is a fixed constant.
When $n=2$, such blow-up result was also proved for a similar equation in \cite{[Dong-Li]} independently.
Later, the transport equation with  fractional velocity given by
\begin{align}\label{M-CCF-generalized}
\begin{cases}
\partial_t\rho+\Lambda^{-2+2\alpha}\nabla\rho\cdot \nabla\rho= 0, &\, (x,t)\in \mathbb{R}^n\times\mathbb{R}_+,\\
\rho(x,0)=\rho_{0}(x),&\, x\in \mathbb{R}^n,
\end{cases}
\end{align}
was also studied. Here the space dimension $n\geq2$ and $0<\alpha<1$.
The equation \eqref{M-CCF-generalized} with $\alpha=\frac12$ is reduced to the multi-dimensional CCF equation \eqref{M-CCF}.
Chae \cite{[Chae]} established the local well-posedness for \eqref{M-CCF-generalized} in the Sobolev space.
Dong \cite{[Dong]} obtained the weighted nonlinear inequality for all $\alpha\in (0,1)$: for any $\delta\in(-2\alpha,2-2\alpha)$ and radial Schwartz function $f$,
\begin{eqnarray}\label{nonlinear-inequality}
\int\limits_{\R^n}\frac{\Lambda^{-2+2\alpha}\nabla f(x)\cdot\nabla f(x)}{|x|^{n+\delta}}dx
\geq
C_{n,\alpha,\delta}\int\limits_{\R^n}\frac{(f(0)-f(x))^2}{|x|^{n+2\alpha+\delta}}dx,
\end{eqnarray}
which was applied to prove the blow-up of smooth solutions to \eqref{M-CCF-generalized}  for a class of smooth and radially symmetric initial data.
Motivated by \cite{[Silvestre-Vicol]}, Jiu and Zhang \cite{[Jiu-Zhang]} also proved the finite-time singularity of solutions to \eqref{M-CCF-generalized} for smooth initial data $\theta_0$ with $\displaystyle\sup_{x\in\R^n}\theta_0(x)>0$ via the De Giorgi iteration technique.
Under the additional non-increasing assumption of radial $f$,  Li and Rodrigo \cite{[Li-Rodrigo20]} proved the inequality \eqref{nonlinear-inequality} by deriving the pointwise inequality below: for any $x\neq0$,
\begin{align*}
-\Lambda^{-2+2\alpha}\nabla f(x)\cdot\frac{x}{|x|}
\geq \frac{C_{\alpha,n}}{|x|^{n-1+2\alpha}}\int\limits_0^{|x|} (-f'(\varrho))\varrho^n
d\varrho,
\end{align*}
along with the use of the Hardy's inequality.
The multi-dimensional extension of the one-dimensional inequality \eqref{exponential-Hilbert} to the Riesz transform $\mathcal{R}$ was established in \cite{[Zhang-JAM27]}:
for any radial Schwartz function $f:\R^n\rightarrow\R$, we have
\begin{eqnarray}\label{Riesz-transform-integral-inequality}
-\int\limits_{\R^n}\frac{\mathcal{R} f(x)\cdot\nabla f(x)}{|x|^{n}}e^{-|x|}dx
\geq \frac{1}{2nB(\frac12,\frac{n}{2})}
\int\limits_{\R^n}\frac{(f(0)-f(x))^2}{|x|^{n+1}}dx
-C_{n}\|f\|^2_{L^\infty}.
\end{eqnarray}
Finally, the finite-time blow-up problem of the fractionally dissipative equations of \eqref{CCF} and \eqref{M-CCF} in the supercritical scheme was also investigated in the literature.
It was proved that certain smooth solution to the equation
\begin{align}\label{CCF-generalized-dissipation}
\begin{cases}
\partial_t\rho-\mathcal{H}\rho\partial_x\rho+\Lambda^\gamma\rho= 0~ {\rm or}~\partial_t\rho-\mathcal{R}\rho\cdot\nabla\rho+\Lambda^\gamma\rho=0,\\
\rho(x,0)=\rho_{0}(x),
\end{cases}
\end{align}
develop finite-time blow-up for $\gamma\in(0,\frac12)$. We refer the readers to \cite{[Kiselev],[Li-Rodrigo08],[Li-Rodrigo09],[Li-Rodrigo20],[Silvestre-Vicol],[Zhang]} and the
references therein for more details.
In the case of $\gamma\in[\frac12,1)$, whether solutions of  \eqref{CCF-generalized-dissipation} with smooth initial data may blow up in finite time remains still open.

Now we proceed to review the known results on the Kiselev-Sarsam equation \eqref{KS} and its multi-dimensional generalization \eqref{M-KS}.
Kiselev and Sarsam \cite{[Kiselev-Sarsam]} proved the local well-posedness for the equation \eqref{KS} posed on the circle and adapted the arguments for the Hilbert transform in \cite{[Kiselev]} to show that, for any $a,\delta>0$ and any  nonnegative even smooth function $f$  defined on $\mathbb{T}$  with $f(0)=0$ and $f'\geq0$ on $[0,\pi)$,
\begin{eqnarray}\label{IPM-inequality}
-\int\limits_0^{\frac\pi2}\frac{\mathcal{H}_af(x)f'(x)}{x^\delta}dx\geq C_{a,\delta}\int\limits_0^{\frac\pi2}\frac{(f(x))^2}{x^{1+\delta}}dx,
\end{eqnarray}
where $C_{a,\delta}$ is a universal constant depending only on $\delta$ and $a$.  As an application of the inequality \eqref{IPM-inequality}, Kiselev and Sarsam proved the finite-time singularity of solutions to \eqref{KS} for a class of  smooth even initial data in the setting of the periodic circle. Recently, Liu and Zhang \cite{[Liu-Zhang]} established several weighted integral inequalities for the transform $\mathcal{H}_a$ in the setting of the real line. Based on these integral inequalities, the authors proved the finite-time blow-up of solutions to \eqref{KS}. Specifically, the authors obtained a pointwise inequality for the transform $\mathcal{H}_a$ acting on even and non-decreasing (on $(0,\infty)$) function $f$ on $\mathbb{R}$:
\begin{eqnarray}\label{pointwise-inequality-Ha}
-\mathcal{H}_a f(x)\geq\frac{2}{\pi x}\Big(1-\frac{x^2}{x^2+a^2}\Big)\int\limits^x_0(f(x)-f(y))dy,
\end{eqnarray}
which can be implied to show the nonlinear weighted inequality for the transform $\mathcal{H}_a$: for any $\delta\in(-1,1)$ and even and non-decreasing (on $(0,\infty)$) function $f$ on $\mathbb{R}$,
\begin{eqnarray*}
-\int\limits^\infty_0\frac{\mathcal{H}_a f(x)f'(x)}{x^{1+\delta}}dx
\geq \frac{a^2}{\pi}(3+\delta-2\sqrt{2+\delta})\int\limits^\infty_0\frac{(f(x)-f(0))^2}{x^{2+\delta}(x^2+a^2)}dx.
\end{eqnarray*}
The inequality \eqref{exponential-Hilbert} for the Hilbert transform $\mathcal{H}$ was also extended to the transform $\mathcal{H}_a$ in \cite{[Liu-Zhang]}: for any even Schwartz function,
\begin{eqnarray}\label{exponentially-Ha}
-\int\limits^\infty_0\frac{\mathcal{H}_af(x)f'(x)}{x}e^{-x}dx
\geq \frac{1}{2\pi}
\int\limits\limits^\infty_0
\frac{(f(0)-f(x))^2}{x^2}\cdot\frac{a^2(3x^2+a^2)}{(x^2+a^2)^2}dx
-\frac{2}{\pi}\Big(1911+\frac{3}{a^2}\Big)\|f\|^2_{L^\infty}.
\end{eqnarray}
 Very recently, by extending the one-dimensional transform $\mathcal{H}_a$ to the multi-dimensional transform $\mathcal{R}_a$ in a natural way, Zhang \cite{[Zhang-JDE2026]} proposed the multi-dimensional Kiselev-Sasarm equation \eqref{M-KS} and established a pointwise inequality for the transform $\mathcal{R}_a$ acting on the radially non-decreasing function $f$:
\begin{eqnarray}\label{pointwise-inequality-JDE}
-\mathcal{R}_a f(x)\cdot\frac{x}{|x|}
\geq\frac{nB(\frac12,\frac{n+1}{2})}{2^{n+1}\pi}\frac{1}{|x|^n}\Big[1-\frac{2^{n+1}|x|^{n+1}}{(4|x|^2+a^2)^{\frac{n+1}{2}}}\Big]
\int\limits_0^{|x|} (f(|x|)-f(\varrho))\varrho^{n-1}
d\varrho,
\end{eqnarray}
which can be viewed as the multi-dimensional version of the inequality \eqref{pointwise-inequality-Ha} for the transform $\mathcal{H}_a$.
Using the pointwise inequality \eqref{pointwise-inequality-JDE}, the author \cite{[Zhang-JDE2026]} obtained a weighted nonlinear inequality for the transform $\mathcal{R}_a$,
\begin{eqnarray}\label{weighted-Ra-monotone}
-\int\limits_{\mathbb{R}^n}\frac{\mathcal{R}_a f(x)\cdot\nabla f(x)}{|x|^{n+\delta}}dx
\geq \frac{(\sqrt{n+1+\delta}-\sqrt{n})^2}{2^{n+2}\pi (B(\frac12,\frac{n+1}{2}))^{-1}}
\int\limits_{\mathbb{R}^n}\frac{(f(x)-f(0))^2}{|x|^{n+1+\delta}}\Big[1-\frac{2^{n+1}|x|^{n+1}}{(4|x|^2+a^2)^{\frac{n+1}{2}}}\Big]dx,
\end{eqnarray}
which was applied to show that smooth solution to \eqref{M-KS} must blow up in finite time for the compactly surpported and radially non-decreasing initial data $\rho_0$ with $\rho_0(0)<0$.

It is worth mentioning that, compared to the pointwise inequality \eqref{pointwise-inequality-Ha} for the one-dimensional transform $\mathcal{H}_a$, the inequality \eqref{pointwise-inequality-JDE} for the multi-dimensional transform $\mathcal{R}_a$ is not particularly satisfactory.
The purpose of this paper is mainly two fold. First, we first establish a more natural pointwise inequality for the transform $\mathcal{R}_a$ (see Proposition \ref{lower-bound-Ra}) and then obtain an improved nonlinear weighted inequality (see Proposition \ref{n-nonlinear-inequality-monotone}). Second, we extend the one-dimensional nonlinear inequality \eqref{exponentially-Ha} for the transform $\mathcal{H}_a$ to the multi-dimensional case and obtained a new nonlinear weighted inequality for the transform $\mathcal{R}_a$  implying the finite-time blow-up of smooth solution to \eqref{M-KS} for a class of radial initial data (see Proposition \ref{nonlinear-weighted-inequality-exponential}).

At the end of this section, some notations are introduced as follows.
Throughout this paper, we will use $C$ to denote a positive constant, whose value may change from line to line, and write $C_{n,a}$ or $C(n,a)$ to emphasize the dependence of a constant on $n$ and $a$.
For $p\in[1,\infty]$, we denote $L^p(\mathbb{R}^n)$ the standard Lebesgue space and its  norm by  $\|\cdot\|_{L^p(\mathbb{R}^n)}$.
For $s>0$, we use the notation $H^s(\mathbb{R}^n)$ to denote the nonhomogeneous Sobolev space of $s$ order,
whose endowed norm is denoted by
\begin{align*}
\|f\|_{H^s(\mathbb{R}^n)}:=\|f\|_{L^2(\mathbb{R}^n)}+\|(-\Delta)^{\frac s2}f\|_{L^2(\mathbb{R}^n)},
\end{align*}
where the fractional Laplacian $(-\Delta)^{\frac s2}$ is defined through the Fourier transform as (see, e.g., \cite{[Stein1970]})
\begin{eqnarray*}
\widehat{(-\Delta)^{\frac s2}f}(\xi)=(2\pi|\xi|)^s\widehat{f}(\xi).
\end{eqnarray*}
For a sake of the convenience, the $L^{p}(\mathbb{R}^n)$-norm of a function $f$ is sometimes abbreviated as $\|f\|_{L^p}$ and  the $H^s(\mathbb{R}^n)$-norm as $\|f\|_{H^s}$.
All norms of a function $f(x,t)$ depending on space and time variables will refer to the spatial norms.
The functions $\Gamma(\cdot)$ and $B(\cdot,\cdot)$ stand for the standard Gamma and Beta function, respectively.
Let $\mathbb{S}^{n-1}$ be the unit sphere in $\mathbb{R}^n$, i.e., $\mathbb{S}^{n-1}=\{x\in\mathbb{R}^n:|x|=1\}$
and $\omega_{n-1}$ be its surface area. We recall that $\omega_{n-1}=\frac{2\pi^{\frac{n}{2}}}{\Gamma(\frac{n}{2})}$.

Finally, we recall some definitions and properties about the Gauss hypergeometric function and Jacobi polynomials that we will use along the paper.
For $|z|<1$ and $\gamma\notin\{0,-1,-2,...\}$, the Gauss hypergeometric function ${}_2F_1(\alpha,\beta;\gamma;z)$ is defined by
\begin{align*}
{}_2F_1(\alpha,\beta;\gamma;z)=\displaystyle\sum^\infty_{m=0}\frac{(\alpha)_m(\beta)_m}{(\gamma)_m}\frac{z^m}{m!},
\end{align*}
which extends analytically beyond the interval $(-1,1)$ and converges at $z=1$ under the condition ${\rm Re}(\gamma-\alpha-\beta)>0$.
Here the Pochhammer's symbol's $(x)_m$ is defined by
\begin{eqnarray*}
(x)_m
=
\begin{cases}
1,
&  \mbox{if $m=0,$ }\\
x(x+1)\cdot\cdot\cdot(x+m-1),
& \mbox{if $m\geq1.$ } \\
\end{cases}
\end{eqnarray*}
We also recall the Pfaff's identity for the Gauss hypergeometric function (see \cite{[Andrews-Askey-Roy]})
\begin{align}\label{Pfaffs-identity}
{}_2F_1(\alpha,\beta,\gamma;z)=(1-z)^{-\alpha}{}_2F_1\Big(\alpha,\gamma-\beta,\gamma;\frac{z}{z-1}\Big)
\end{align}
and the finite Chu-Vandermonde identity
\begin{align}\label{Chu-Vandermonde}
{}_2F_1\Big(-j,\beta;\gamma;1\Big)=\frac{(\gamma-\beta)_j}{(\gamma)_j}
\end{align}
(see e.g., Corollary 2.2.3 in \cite{[Andrews-Askey-Roy]})

For every integer $k\geq1$, we proceed to recall the standard $k$-th Jacobi polynomials $P^{(\alpha,\beta)}_k(x)$ with indices $\alpha,\beta>-1$ defined by
\begin{align*}
P^{(\alpha,\beta)}_k(x)=\frac{(-1)^k}{2^kk!}(1-x)^{-\alpha}(1+x)^{-\beta}\frac{d^k}{dx^k}\Big((1-x)^{\alpha+k}(1+x)^{\beta+k}\Big),~x\in(-1,1).
\end{align*}
We recall the following Jacobi identity
\begin{align}\label{Jacobi-identity}
P_k^{(\alpha,\beta)}(x)=\frac{1+x}{2}P_k^{(\alpha,\beta+1)}(x)+\frac{1-x}{2}P_k^{(\alpha+1,\beta)}(x)
\end{align}
and the basic relation between the hypergeometric function and the Jacobi polynomials
\begin{align}\label{hypergeometric-Jacobi}
P^{(\alpha,\beta)}_k(x)=\frac{(\alpha+1)_k}{k!}{}_2F_1\Big(-k,k+\alpha+\beta+1;\alpha+1;\frac{1-x}{2}\Big).
\end{align}
The remaining part of this paper is organized as follows. In Section 2, we first recall the local well-posedness for the model \eqref{M-KS}, and then present some useful facts utilized later in this paper.
Section 3 is devoted to some weighted nonlinear integral inequalities for the transform $\mathcal{R}_a$.
The finite-time blow-up of smooth solutions to \eqref{M-KS} is discussed in Section 4.
The proofs of some lemmas are given in Section 5.
\section{Preliminaries}
\subsection{Basic facts}
We first recall the local well-posedness result for the equation \eqref{M-KS} and its Beala-Kato-Majda type blow-up criterion, which was established in \cite{[Zhang-JDE2026]}.
\begin{lemma}\label{local}
Let $n\geq2$ and $a,g>0$. For each $\rho_0\in H^s(\mathbb{R}^n)$ with $s>\frac n2+1$, there exists a $T=T(\|\rho_0\|_{H^s})>0$ such that \eqref{M-KS} admits a unique solution $\rho$ in $C([0,T); H^s(\mathbb{R}^n))\cap {\rm Lip}((0,T);H^{s-1}(\mathbb{R}^n))$.
Moreover, if $0<T_\ast<\infty$ is the first blow-up time such that $\rho$ cannot be continued in $C([0,T_\ast); H^s(\mathbb{R}^n))$, then
\begin{eqnarray*}
\displaystyle\limsup_{t\rightarrow T_\ast}\|\rho(t)\|_{H^s}=\infty~{\rm if~and~only~if}~\lim_{t\rightarrow T_\ast}\int\limits_0^{t}\|\nabla\rho(\tau)\|_{L^\infty}d\tau=\infty.
\end{eqnarray*}
\end{lemma}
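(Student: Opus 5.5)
The plan is the standard energy-method route for transport equations with a bounded-order nonlocal velocity, adapted to the kernel of $\mathcal{R}_a$. The first step is to split $\mathcal{R}_a=\mathcal{R}-\mathcal{K}_a$, where $\mathcal{K}_a$ is convolution with $c_n\frac{x}{(|x|^2+a^2)^{\frac{n+1}{2}}}$. The symbol of $\mathcal{R}$ is bounded and homogeneous of degree $0$. The kernel of $\mathcal{K}_a$ is smooth, lies in $L^{q}$ for every $q>\frac{n}{n}=1$, and all its derivatives are integrable. Hence $\mathcal{R}_a$ is a Fourier multiplier with bounded symbol, so that
\[
\|\mathcal{R}_a f\|_{H^s}\le C\|f\|_{H^s}.
\]
It also commutes with derivatives. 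Existence then follows by mollifying the equation: set $\rho^\epsilon_t+gJ_\epsilon(\mathcal{R}_aJ_\epsilon\rho^\epsilon\cdot\nabla J_\epsilon\rho^\epsilon)=0$. This is an ODE on $H^s$, and Picard's theorem gives a solution on $[0,T_\epsilon)$.

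The key step is the uniform $H^s$ estimate. Apply $\Lambda^s$, pair with $\Lambda^s\rho$, and use the Kato--Ponce commutator estimate:
\[
\frac{d}{dt}\|\rho\|_{H^s}^2\le C\big(\|\nabla\mathcal{R}_a\rho\|_{L^\infty}+\|\nabla\rho\|_{L^\infty}\big)\|\rho\|_{H^s}^2 .
\]
Here I use $\|\Lambda^s(\mathcal{R}_a\rho)\|_{L^2}\le C\|\rho\|_{H^s}$. The transport part is handled by integrating by parts, which leaves $\operatorname{div}\mathcal{R}_a\rho$; this is a bounded multiple of $\Lambda\rho$ minus a smooth convolution, and so is controlled by $\|\nabla\mathcal{R}_a\rho\|_{L^\infty}$. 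Since $s>\frac n2+1$, Sobolev embedding bounds $\|\nabla\mathcal{R}_a\rho\|_{L^\infty}$ by $C\|\rho\|_{H^s}$. This gives a Riccati-type bound and therefore a uniform existence time $T(\|\rho_0\|_{H^s})$.

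Passing $\epsilon\to0$ is routine. I would show the approximations are Cauchy in $L^2$ via an $L^2$ difference estimate, interpolate to get convergence in $H^{s'}$ for $s'<s$, and then upgrade to $C([0,T);H^s)$ by the Bona--Smith argument. Uniqueness comes from the same $L^2$ difference estimate combined with Gronwall. The Lipschitz-in-time property into $H^{s-1}$ is read off from the equation, since $\mathcal{R}_a\rho\cdot\nabla\rho\in H^{s-1}$.

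For the BKM criterion, the "only if" direction is immediate from the embedding. For the "if" direction, I need to replace $\|\nabla\mathcal{R}_a\rho\|_{L^\infty}$ in the energy inequality by $\|\nabla\rho\|_{L^\infty}$ up to a logarithm. The $\mathcal{K}_a$ part is harmless, since $\|\nabla\mathcal{K}_a\rho\|_{L^\infty}\le C_a\|\rho\|_{L^\infty}$ and $\|\rho\|_{L^\infty}$ is conserved by transport. The $\mathcal{R}$ part is the main obstacle, because Riesz transforms are not bounded on $L^\infty$. I would use the Beale--Kato--Majda logarithmic inequality
\[
\|\nabla\mathcal{R}\rho\|_{L^\infty}\le C\big(1+\|\rho\|_{L^2}+\|\nabla\rho\|_{L^\infty}\log(e+\|\rho\|_{H^s})\big),
\]
with $\|\rho\|_{L^2}$ bounded using the divergence term. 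This leads to $\frac{d}{dt}\log\|\rho\|_{H^s}\lesssim(1+\|\nabla\rho\|_{L^\infty})\log(e+\|\rho\|_{H^s})$. A double-exponential Gronwall bound then shows that $\int_0^{T_\ast}\|\nabla\rho\|_{L^\infty}<\infty$ keeps the $H^s$ norm bounded, which proves the criterion.
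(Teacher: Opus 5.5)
The paper gives no proof of this lemma; it quotes it from Zhang's JDE 2026 paper, so there is no in-paper argument to compare with. Your route is the standard one and works: mollified ODE, Kato--Ponce with $\mathcal{R}_a$ bounded on $H^s$, Bona--Smith, and a Beale--Kato--Majda logarithmic inequality, with $\nabla\mathcal{K}_a\rho$ handled by its integrable kernel and $\|\rho\|_{L^\infty}=\|\rho_0\|_{L^\infty}$. The exception is one step in the continuation criterion that does not close as written: bounding $\|\rho\|_{L^2}$ ``using the divergence term''.

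After integrating by parts, $\frac{d}{dt}\|\rho\|_{L^2}^2=g\int(\operatorname{div}\mathcal{R}_a\rho)\rho^2dx$, so you need $\|\operatorname{div}\mathcal{R}_a\rho\|_{L^\infty}$. But $\operatorname{div}\mathcal{R}_a$ has symbol $2\pi|\xi|(1-e^{-2\pi a|\xi|})$. It is a Riesz-type operator applied to $\nabla\rho$ and is not controlled by $\|\nabla\rho\|_{L^\infty}$. Your only tool for it is the logarithmic inequality, which contains $\|\rho\|_{L^2}$ again. You end up with $\frac{d}{dt}\|\rho\|_{L^2}\lesssim\big(1+\|\rho\|_{L^2}+\|\nabla\rho\|_{L^\infty}\log(e+\|\rho\|_{H^s})\big)\|\rho\|_{L^2}$. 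This is a Riccati-type inequality that gives no bound on $[0,T_\ast)$, however small $\int_0^{T_\ast}\|\nabla\rho\|_{L^\infty}d\tau$ is. Absorbing $\|\rho\|_{L^2}$ into $\|\rho\|_{H^s}$ does not help either; it just puts a term $\|\rho\|_{H^s}^3$ into the energy inequality.

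The repair is to not integrate by parts. With $c_n=\Gamma(\frac{n+1}{2})\pi^{-\frac{n+1}{2}}$, the function $c_n x(|x|^2+a^2)^{-\frac{n+1}{2}}$ is the conjugate Poisson kernel. Hence $\mathcal{R}_a$ has symbol $-i\frac{\xi}{|\xi|}(1-e^{-2\pi a|\xi|})$, of modulus less than $1$, so $\|\mathcal{R}_a\rho\|_{L^2}\le\|\rho\|_{L^2}$ and
\[
\frac{d}{dt}\|\rho\|_{L^2}^2=-2g\int_{\mathbb{R}^n}\rho\,\mathcal{R}_a\rho\cdot\nabla\rho\,dx\le 2g\|\nabla\rho\|_{L^\infty}\|\rho\|_{L^2}^2 .
\]
Therefore $\|\rho(t)\|_{L^2}\le\|\rho_0\|_{L^2}\exp\big(g\int_0^t\|\nabla\rho\|_{L^\infty}d\tau\big)$, which is bounded on $[0,T_\ast)$ under your hypothesis. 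Alternatively, bound the low-frequency block of $\nabla\mathcal{R}\rho$ by $C\|\rho\|_{L^\infty}=C\|\rho_0\|_{L^\infty}$. Its symbol is homogeneous of degree one near the origin, so its kernel is integrable, and no $L^2$ norm enters. Either way your double-exponential Gronwall argument then closes.

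Two smaller points. First, you have swapped ``if'' and ``only if''. The direction that follows from $\|\nabla\rho\|_{L^\infty}\le C\|\rho\|_{H^s}$ and $T_\ast<\infty$ is $\int_0^{T_\ast}\|\nabla\rho\|_{L^\infty}d\tau=\infty\Rightarrow\limsup_{t\to T_\ast}\|\rho(t)\|_{H^s}=\infty$, which is the ``if'' part. Your logarithmic argument proves the ``only if'' part; the content is right and only the labels are off. Second, the claim that $\mathcal{K}_a$ has a bounded symbol is true, but it does not follow from its kernel lying in $L^q$ for $q>1$ with integrable derivatives. The even kernel $(1+|x|^2)^{-\frac n2}$ has both properties, yet its symbol diverges logarithmically at $\xi=0$. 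What gives boundedness is the cancellation coming from oddness, or directly the explicit symbol $-i\frac{\xi}{|\xi|}e^{-2\pi a|\xi|}$ of $\mathcal{K}_a$.
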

We also recall that the radial symmetry of the initial data is preserved by the solution to \eqref{M-KS}.
\begin{lemma}\label{radial-property-preserved}
Fix $n\geq2$ and $a,g>0$. If $\rho$ is a smooth solution to \eqref{M-KS} with a radial and nondecreasing initial data $\rho_0$, then  $\rho(x,t)$ is also radial and nondecreasing in its life span.
\end{lemma}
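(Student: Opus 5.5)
The plan is in three steps: radial symmetry via rotation covariance and uniqueness, an ODE representation in the radial variable, and monotonicity via the characteristic flow.

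\textbf{Radial symmetry.} We use the rotation covariance of $\mathcal{R}_a$. For any orthogonal matrix $O\in O(n)$ set $\rho^O(x,t):=\rho(Ox,t)$. The kernel
\begin{eqnarray*}
K(z)=\frac{z}{|z|^{n+1}}-\frac{z}{(|z|^2+a^2)^{\frac{n+1}{2}}}
\end{eqnarray*}
satisfies $K(Oz)=OK(z)$. The change of variables $y=Oy'$ therefore gives $\mathcal{R}_a\rho^O(x)=O^{T}(\mathcal{R}_a\rho)(Ox)$. Since also $\nabla\rho^O(x)=O^T(\nabla\rho)(Ox)$, we get
\begin{eqnarray*}
\mathcal{R}_a\rho^O(x)\cdot\nabla\rho^O(x)=(\mathcal{R}_a\rho)(Ox)\cdot(\nabla\rho)(Ox).
\end{eqnarray*}
Hence $\rho^O$ solves \eqref{M-KS} with initial data $\rho_0(O\cdot)=\rho_0$. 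By the uniqueness part of Lemma \ref{local}, $\rho^O=\rho$ on the life span, so $\rho(\cdot,t)$ is radial.

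\textbf{Velocity of a radial function.} For radial $\rho$, the same covariance applied to rotations fixing $x$ shows that $\mathcal{R}_a\rho(x)$ is invariant under all rotations fixing $x$. It is therefore parallel to $x$: $\mathcal{R}_a\rho(x,t)=v(|x|,t)\frac{x}{|x|}$ for a scalar $v$, which is smooth for $x\neq0$ and has $v(0,t)=0$ by oddness. Writing $\rho(x,t)=f(r,t)$ with $r=|x|$, the equation reduces to the one-dimensional transport equation
\begin{eqnarray*}
\partial_tf+g\,v(r,t)\,\partial_rf=0,\qquad r>0.
\end{eqnarray*}

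\textbf{Monotonicity.} Differentiating in $r$, the quantity $w=\partial_rf$ satisfies
\begin{eqnarray*}
\partial_tw+g v\,\partial_rw=-g(\partial_rv)w,
\end{eqnarray*}
a linear equation along characteristics. Let $\dot r(t)=gv(r(t),t)$. Since $v$ is Lipschitz (the solution is smooth, so $\mathcal{R}_a\rho\in C^1$) and $v(0,t)=0$, characteristics starting at $r_0>0$ stay positive, and the map $r_0\mapsto r(t)$ is a bijection of $(0,\infty)$. Along each characteristic,
\begin{eqnarray*}
w(r(t),t)=w(r_0,0)\exp\Big(-g\int_0^t\partial_rv(r(s),s)\,ds\Big),
\end{eqnarray*}
so $w\ge0$ is preserved. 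Equivalently, one can argue directly that $\rho(X(t,x_0),t)=\rho_0(x_0)$ along the flow map $X$, which maps spheres to spheres and preserves the ordering of radii because the flow is radial and one-dimensional-monotone. Both routes show that $\rho(\cdot,t)$ is radially nondecreasing.

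\textbf{Main obstacle.} The main point of care is justifying the regularity of $v$ near $r=0$, and the representation of $\mathcal{R}_a$ as a $C^1$ vector field. This follows because $\mathcal{R}_a\rho$ is the Riesz transform of an $H^s$ function plus the convolution of $\rho$ with a smooth kernel, so it lies in $H^{s}\subset C^1$ when $s>\frac n2+1$. Once this is in place, the ODE comparison, namely that characteristics cannot cross, gives the result.
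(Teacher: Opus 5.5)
The paper gives no proof of this lemma. It is recalled from \cite{[Zhang-JDE2026]} and then used, e.g.\ to get $\rho(0,t)=\rho_0(0)$ in the proof of Theorem~\ref{singularity-formation-monotone}. Your argument is correct and is the standard one: $O(n)$-covariance of $\mathcal{R}_a$ together with the uniqueness in Lemma~\ref{local} gives the symmetry, and the order-preserving one-dimensional radial characteristic flow gives the monotonicity. It therefore supplies a self-contained justification of what the paper takes for granted.

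Two points should be tightened.

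First, for $n=2$ the only rotation fixing $x\neq0$ is the identity. To conclude $\mathcal{R}_a\rho(x)\parallel x$ you need the reflection across the line $\mathbb{R}x$. This is available because you proved the covariance for all $O\in O(n)$, but you should speak of orthogonal maps fixing $x$ rather than rotations.

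Second, the regular part of the kernel, $z/(|z|^2+a^2)^{\frac{n+1}{2}}$, decays like $|z|^{-n}$ and is not integrable. So its boundedness on $H^s$ does not follow from ``convolution with a smooth kernel'' via Young's inequality. The correct justification is that $\frac{\Gamma(\frac{n+1}{2})}{\pi^{\frac{n+1}{2}}}\frac{z}{(|z|^2+a^2)^{\frac{n+1}{2}}}$ is the conjugate Poisson kernel at height $a$. Hence the symbol of $\mathcal{R}_a$ is that of $\mathcal{R}$ multiplied by $1-e^{-2\pi a|\xi|}$, which is bounded. This gives $\mathcal{R}_a\rho\in C([0,T);H^s)\subset C([0,T);C^1_b)$ for $s>\frac n2+1$, which is exactly the bounded Lipschitz velocity you need for global, non-crossing characteristics.

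Of your two monotonicity routes, the flow-map one ($\rho(X(t,x_0),t)=\rho_0(x_0)$ plus monotonicity of $r_0\mapsto r(t)$) is the more robust, since it uses only $C^1$ regularity of $\rho$. The equation for $w=\partial_r f$ needs two spatial derivatives, which is harmless here only because the lemma assumes a smooth solution.
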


We also need a string of simple inequalities utilized later. Let $n\geq2$ be an integer and $a>0$.
We denote the weight (see \eqref{boundary-0})
\begin{align}\label{weight-definition}
w_a(\varrho):=1-\frac{\varrho^{n+1}(\varrho^2-na^2)}{(\varrho^2+a^2)^{\frac{n+3}{2}}},~\varrho\geq0.
\end{align}
Then straightforward computations give that
\begin{align*}
w'_a(\varrho)
=\frac{3(n+1)a^2\varrho^n(a\sqrt{\frac{n}{3}}+\varrho)}{(\varrho^2+a^2)^{\frac{n+5}{2}}}\Big(a\sqrt{\frac{n}{3}}-\varrho\Big),~~\varrho\geq0,
\end{align*}
which implies that, $w_a(\varrho)>0$ for any $\varrho\in[0,\infty)$, and
\begin{align}\label{lower-bound-weight}
w_a(\varrho)\geq1,~{\rm for~ every~ } \varrho\in\Big[0,a\sqrt{\frac{n}{3}}\Big].
\end{align}
Moreover,
\begin{align}\label{small-order}
w_a(\varrho)
&=1-\Big(1-n\frac{a^2}{\varrho^2}\Big)\Big(1+\frac{a^2}{\varrho^2}\Big)^{-\frac{n+3}{2}}\nonumber\\
&=1-\Big(1-n\frac{a^2}{\varrho^2}\Big)\Big[1-\frac{n+3}{2}\frac{a^2}{\varrho^2}+O\Big(\frac{a^4}{\varrho^4}\Big)\Big]\nonumber\\
&=\frac{3(n+1)}{2}\frac{a^2}{\varrho^2} +O\Big(\frac{a^4}{\varrho^4}\Big),~{\rm as}~\varrho\rightarrow\infty.
\end{align}
\begin{lemma}\label{Young-inequality-Weight}
Let $n\geq2$ and $a>0$. Given $f:[0,\infty)\mapsto\R$ a smooth and bounded function. We have that, for any $\varepsilon>0$,
\begin{eqnarray*}
\int\limits_0^\infty\frac{(f(\varrho)-f(0))^2}{\varrho e^{\frac\varrho2}}d\varrho
\leq\varepsilon\int\limits_0^\infty\frac{(f(\varrho)-f(0))^2}{\varrho^{2}}w_a(\varrho)d\varrho
+\Big[\frac{8}{\min\Big\{1,a\sqrt{\frac{n}{3}}\Big\}}+\frac{1}{\varepsilon}\Big]\|f\|^2_{L^\infty}
\end{eqnarray*}
and
\begin{eqnarray*}
\int\limits_0^\infty\frac{(f(\varrho)-f(0))^2}{\varrho^{2}}(1-e^{-\frac{\varrho}{2}})d\varrho
\leq
\varepsilon\int\limits_0^\infty\frac{(f(\varrho)-f(0))^2}{\varrho^{2}}w_a(\varrho)d\varrho
+\Big[\frac{4}{\min\Big\{1,a\sqrt{\frac{n}{3}}\Big\}}+\frac{1}{4\varepsilon}\Big]\|f\|^2_{L^\infty},
\end{eqnarray*}
where the weight $w_a(\varrho)$ is defined as \eqref{weight-definition}.
\end{lemma}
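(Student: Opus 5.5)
Write $F(\varrho):=(f(\varrho)-f(0))^2$, so that $0\le F\le 4\|f\|^2_{L^\infty}$, and set $r_0:=\min\{1,a\sqrt{n/3}\}$. For both inequalities we split $(0,\infty)$ into $(0,r_0]$ and $(r_0,\infty)$. On $(0,r_0]$ we have $w_a\ge1$ by \eqref{lower-bound-weight}, so the singular part near the origin is absorbed by the weighted integral. On $(r_0,\infty)$ both integrands are bounded by a multiple of $\|f\|^2_{L^\infty}$ times an integrable function. The $\varepsilon$ comes from Young's inequality applied to the small-$\varrho$ piece.

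\textbf{First inequality.} On $(0,r_0]$ we have $\frac{1}{\varrho e^{\varrho/2}}\le\frac1\varrho$. We use Young's inequality in the form
$$\frac{F}{\varrho}=\sqrt{\frac{F}{\varrho^2}}\cdot\sqrt F\le\varepsilon\frac{F}{\varrho^2}+\frac{F}{4\varepsilon}.$$
With $w_a\ge1$ this gives
$$\int_0^{r_0}\frac{F}{\varrho e^{\varrho/2}}\,d\varrho\le\varepsilon\int_0^{r_0}\frac{F}{\varrho^2}w_a\,d\varrho+\frac{r_0}{4\varepsilon}\cdot4\|f\|^2_{L^\infty}\le \varepsilon\int_0^\infty\frac{F}{\varrho^2}w_a\,d\varrho+\frac1\varepsilon\|f\|^2_{L^\infty},$$
using $r_0\le1$ and $w_a>0$. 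On $(r_0,\infty)$ we bound $\frac{1}{\varrho e^{\varrho/2}}\le\frac{1}{r_0}e^{-\varrho/2}$, which gives
$$\int_{r_0}^\infty\frac{F}{\varrho e^{\varrho/2}}\,d\varrho\le\frac{4\|f\|^2_{L^\infty}}{r_0}\int_0^\infty e^{-\varrho/2}\,d\varrho=\frac{8}{r_0}\|f\|^2_{L^\infty}.$$
Adding the two pieces yields the first inequality.

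\textbf{Second inequality.} We use $1-e^{-\varrho/2}\le\frac\varrho2$. On $(0,r_0]$, Young's inequality gives
$$\frac{F}{\varrho^2}\cdot\frac{\varrho}{2}=\frac{F}{2\varrho}\le\varepsilon\frac{F}{\varrho^2}+\frac{F}{16\varepsilon},$$
and integrating produces at most $\varepsilon\int_0^\infty\frac{F}{\varrho^2}w_a\,d\varrho+\frac{1}{4\varepsilon}\|f\|^2_{L^\infty}$. On $(r_0,\infty)$ we use $1-e^{-\varrho/2}\le1$ to get
$$\int_{r_0}^\infty\frac{F}{\varrho^2}\,d\varrho\le\frac{4}{r_0}\|f\|^2_{L^\infty}.$$
This gives the second inequality with exactly the stated constants.

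\textbf{Main point.} There is no real obstacle. The only point needing care is that near the origin the weight satisfies $w_a\ge1$ only on $[0,a\sqrt{n/3}]$, by \eqref{lower-bound-weight}; this is why the cutoff is $\min\{1,a\sqrt{n/3}\}$, which also lets us bound $r_0\le1$ in the constants. The decay $w_a\sim\varrho^{-2}$ from \eqref{small-order} is never needed, because the tail is handled by $\|f\|_{L^\infty}$ alone.
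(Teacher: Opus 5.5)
Your proof is correct and follows essentially the same route as the paper: you split at $\min\{1,a\sqrt{n/3}\}$, use $w_a\ge1$ near the origin, and bound the tail crudely by $\|f\|_{L^\infty}^2$, which gives exactly the stated constants. The only difference is cosmetic: you apply Young's inequality pointwise to $F/\varrho$, whereas the paper first bounds $(f(\varrho)-f(0))^2\le 2\|f\|_{L^\infty}|f(\varrho)-f(0)|$, then applies Cauchy--Schwarz with $\int_0^{\min\{1,a\sqrt{n/3}\}}w_a^{-1}\,d\varrho\le1$, and only then uses Young on the resulting scalar product.
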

\textbf{Proof.} We denote
\begin{align*}
\widetilde{a}:=\min\Big\{1,a\sqrt{\frac{n}{3}}\Big\}.
\end{align*}
By the elementary inequalities
\begin{eqnarray*}
e^{\frac{\varrho}{2}}>1~{\rm and}~e^{\frac{\varrho}{2}}>\frac{\varrho}{2},~ {\rm for}~\varrho>0,
\end{eqnarray*}
along with the Cauchy-Schwarz, Young's inequality and \eqref{lower-bound-weight}, we first derive that
\begin{align*}
&\int\limits_0^\infty\frac{(f(\varrho)-f(0))^2}{\varrho e^{\frac\varrho2}}d\varrho
\leq2\|f\|_{L^\infty}\int\limits_0^{\widetilde{a}}\frac{|f(\varrho)-f(0)|}{\varrho}d\varrho
+8\|f\|^2_{L^\infty}\int\limits_{{\widetilde{a}}}^\infty\frac{d\varrho}{\varrho^2}\\
&~~~~~~=2\|f\|_{L^\infty}\int\limits_0^{\widetilde{a}}\frac{|f(\varrho)-f(0)|}{\varrho}\sqrt{w_a(\varrho)}\frac{d\varrho}{\sqrt{w_a(\varrho)}}
+\frac{8}{\widetilde{a}}\|f\|^2_{L^\infty}\\
&~~~~~~\leq2\|f\|_{L^\infty}\Big(\int\limits_0^{\widetilde{a}}\frac{(f(\varrho)-f(0))^2}{\varrho^2}w_a(\varrho)d\varrho\Big)^{\frac12}
\Big(\int\limits_0^{\widetilde{a}}\frac{d\varrho}{w_a(\varrho)}\Big)^{\frac12}
+\frac{8}{\widetilde{a}}\|f\|^2_{L^\infty}\\
&~~~~~~\leq2\|f\|_{L^\infty}\Big(\int\limits_0^\infty\frac{(f(\varrho)-f(0))^2}{\varrho^2}w_a(\varrho)d\varrho\Big)^{\frac12}
+\frac{8}{\widetilde{a}}\|f\|^2_{L^\infty}\\
&~~~~~~\leq\varepsilon\int\limits_0^\infty\frac{(f(\varrho)-f(0))^2}{\varrho^{2}}w_a(\varrho)d\varrho
+\Big(\frac{8}{\widetilde{a}}+\frac{1}{\epsilon}\Big)\|f\|^2_{L^\infty},
\end{align*}
where $\varepsilon>0$ is any positive constant.
Similarly, also by the elementary inequalities
\begin{eqnarray*}
1-e^{-\frac{\varrho}{2}}<\frac{\varrho}{2}~{\rm and}~0<1-e^{-\frac{\varrho}{2}}<1,~ {\rm for}~\varrho>0,
\end{eqnarray*}
we can obtain that
\begin{align*}
&\int\limits_0^\infty\frac{(f(\varrho)-f(0))^2}{\varrho^{2}}(1-e^{-\frac{\varrho}{2}})d\varrho
\leq\|f\|_{L^\infty}\int\limits_0^{\widetilde{a}}\frac{|f(\varrho)-f(0)|}{\varrho}d\varrho
+4\|f\|^2_{L^\infty}\int\limits_{\widetilde{a}}^\infty\frac{d\varrho}{\varrho^{2}}\\
&~~~~~~~~~~~~~\leq\|f\|_{L^\infty}\Big[\int\limits_0^{\widetilde{a}}\frac{(f(\varrho)-f(0))^2}{\varrho^{2}}w_a(\varrho)d\varrho\Big]^{\frac12}
\Big[\int\limits_0^{\widetilde{a}}\frac{d\varrho}{w_a(\varrho)}\Big]^{\frac12}
+\frac{4}{\widetilde{a}}\|f\|^2_{L^\infty}\\
&~~~~~~~~~~~~~\leq\|f\|_{L^\infty}\Big(\int\limits_0^\infty\frac{(f(\varrho)-f(0))^2}{\varrho^{2}}w_a(\varrho)d\varrho\Big)^{\frac12}
+\frac{4}{\widetilde{a}}\|f\|^2_{L^\infty}\\
&~~~~~~~~~~~~~\leq\varepsilon\int\limits_0^\infty\frac{(f(\varrho)-f(0))^2}{\varrho^{2}}w_a(\varrho)d\varrho+\Big(\frac{4}{\widetilde{a}}+\frac{1}{4\varepsilon}\Big)\|f\|^2_{L^\infty}.
\end{align*}
We then finish the proof of Lemma \ref{Young-inequality-Weight}. \hfill\hfill$\square$\vskip12pt

Finally, we recall a well-known Bernstein type estimate for the Jacobi polynomials, which was applied directly in \cite{[Betancor-De Leon-Contreras]} (see (5) there).  We will give its proof in the Appendix \ref{appendix} for completeness.
\begin{lemma}\label{Bernstein-Lemma}
Let $\alpha\geq0$ and $\beta\geq0$. There exists a constant $C_{\alpha,\beta}>0$, depending only on $\alpha$ and $\beta$, such that for every integer $k\geq1$ and any $x\in(-1,1)$,
\begin{align*}
(1-x)^{\frac{\alpha}{2}+\frac14}(1+x)^{\frac{\beta}{2}+\frac14}|P^{(\alpha,\beta)}_k(x)|\leq \frac{C_{\alpha,\beta}}{\sqrt{k}}.
\end{align*}
\end{lemma}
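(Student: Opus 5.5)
The plan is to pass to the trigonometric variable $x=\cos\theta$, $\theta\in(0,\pi)$. There one has $1-x=2\sin^2\frac\theta2$ and $1+x=2\cos^2\frac\theta2$, so the quantity to bound becomes, up to a constant depending only on $\alpha,\beta$,
\begin{align*}
u_k(\theta):=\Big(\sin\frac\theta2\Big)^{\alpha+\frac12}\Big(\cos\frac\theta2\Big)^{\beta+\frac12}P^{(\alpha,\beta)}_k(\cos\theta).
\end{align*}
The symmetry $P^{(\alpha,\beta)}_k(-x)=(-1)^kP^{(\beta,\alpha)}_k(x)$ exchanges the roles of $\alpha$ and $\beta$. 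It therefore suffices to prove $|u_k(\theta)|\leq C_{\alpha,\beta}k^{-1/2}$ for $\theta\in(0,\frac\pi2]$, provided the constants are allowed to depend symmetrically on $(\alpha,\beta)$. I would split this range into an endpoint region $0<\theta\leq c/k$ and an interior region $c/k\leq\theta\leq\frac\pi2$, with $c>0$ fixed.

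In the endpoint region I will use the classical bound $\max_{[-1,1]}|P^{(\alpha,\beta)}_k|=P^{(\alpha,\beta)}_k(1)=\binom{k+\alpha}{k}\leq C_\alpha k^{\alpha}$, valid when $\alpha\geq\beta\geq-\frac12$ (Szegő, Theorem 7.32.1). When $\beta>\alpha$, I will instead use the estimate $|P^{(\alpha,\beta)}_k(\cos\theta)|\leq Ck^\alpha$ for $\theta\leq c/k$ (Szegő, (7.32.5)). This can also be read off from \eqref{hypergeometric-Jacobi}: the series in $\frac{1-x}{2}=\sin^2\frac\theta2\lesssim k^{-2}$ has terms bounded by $\frac{(k(k+\alpha+\beta+1))^m}{(\alpha+1)_m m!}(c/k)^{2m}$, which are summable uniformly in $k$. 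Since $(\sin\frac\theta2)^{\alpha+\frac12}\leq(c/k)^{\alpha+\frac12}$, the product is at most $Ck^{\alpha}k^{-\alpha-\frac12}=Ck^{-1/2}$.

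In the interior region the key tool is the Liouville normal form of the Jacobi equation. With $N=k+\frac{\alpha+\beta+1}{2}$, the function $u_k$ satisfies
\begin{align*}
u_k''(\theta)+\Big(N^2+\frac{\frac14-\alpha^2}{4\sin^2\frac\theta2}+\frac{\frac14-\beta^2}{4\cos^2\frac\theta2}\Big)u_k(\theta)=0.
\end{align*}
On $[c/k,\frac\pi2]$ the potential is $N^2(1+O(c^{-2}))$ with a perturbation $q(\theta)=O(\theta^{-2})$. I would first try a Sonine-type energy argument: with $E(\theta)=u_k^2+u_k'^2/Q(\theta)$, where $Q$ is the full coefficient, one has $E'=-Q'u_k'^2/Q^2$. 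Integrating $|Q'|/Q\lesssim \theta^{-3}/N^2$ over $[c/k,\frac\pi2]$ gives a bounded total variation of $\log E$ once $c$ is large. This yields $\sup_{[c/k,\pi/2]}|u_k|^2\leq C E(c/k)$. If the monotonicity or sign of $Q'$ is problematic, I would instead use variation of constants against $\cos(N\theta),\sin(N\theta)$ together with Gronwall, since $\int_{c/k}^{\pi/2}|q|/N\,d\theta\lesssim 1/c$. Either way the supremum is controlled by the data at $\theta_0=c/k$.

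It then remains to bound $E(\theta_0)\lesssim k^{-1}$. The endpoint step gives $|u_k(\theta_0)|\lesssim k^{-1/2}$. For the derivative I need $|u_k'(\theta_0)|\lesssim N k^{-1/2}$. Differentiating $u_k$ produces $\theta^{-1}u_k$-type terms, which are of size $k\cdot k^{-1/2}$, and a term involving $\frac{d}{d\theta}P_k(\cos\theta)$. By $\frac{d}{dx}P^{(\alpha,\beta)}_k=\frac{k+\alpha+\beta+1}{2}P^{(\alpha+1,\beta+1)}_{k-1}$ and the endpoint bound applied with indices $(\alpha+1,\beta+1)$, this derivative is $\lesssim k\cdot k^{\alpha+1}\sin\theta$, which gives the required size at $\theta_0$.

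I expect the interior uniform estimate to be the main obstacle: the energy/Gronwall argument has to be carried out uniformly in $k$, with the singular potential handled near $\theta_0=c/k$. If this becomes too delicate, the fallback is to invoke Darboux–Szegő asymptotics (Szegő, Theorem 8.21.13), $P^{(\alpha,\beta)}_k(\cos\theta)=k^{-1/2}\pi^{-1/2}(\sin\frac\theta2)^{-\alpha-\frac12}(\cos\frac\theta2)^{-\beta-\frac12}[\cos(N\theta+\gamma)+O((k\sin\theta)^{-1})]$, uniformly for $c/k\leq\theta\leq\pi-c/k$. Multiplying by the weight then gives the bound at once.
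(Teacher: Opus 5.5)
Your argument is correct, but it takes a different route from the paper's.

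\textbf{The paper's route.} The paper also reduces to $x=\cos\theta$ and uses the symmetry $P^{(\alpha,\beta)}_k(-t)=(-1)^kP^{(\beta,\alpha)}_k(t)$ to restrict to $\theta\in(0,\frac\pi2]$. It then treats that whole range at once with Hilb's formula (Szeg\H{o}, Theorem 8.21.12): $(\sin\frac\theta2)^\alpha(\cos\frac\theta2)^\beta P^{(\alpha,\beta)}_k(\cos\theta)=\frac{\Gamma(k+\alpha+1)}{K^\alpha k!}(\frac{\theta}{\sin\theta})^{1/2}J_\alpha(K\theta)+R_k(\theta)$ with $|R_k(\theta)|\le C\theta^{1/2}k^{-3/2}$. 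The two regimes $K\theta<1$ and $K\theta\ge 1$ enter only through the Bessel bounds $|J_\alpha(t)|\lesssim t^\alpha$ and $|J_\alpha(t)|\lesssim t^{-1/2}$.

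\textbf{Your route.} You handle $\theta\le c/k$ with the explicit hypergeometric series. This gives $|P^{(\alpha,\beta)}_k(\cos\theta)|\lesssim k^\alpha$ there for all $\alpha,\beta\ge0$, so the case split via Theorem 7.32.1 is not even needed. You then propagate the bound into $[c/k,\frac\pi2]$ by a Sonine-type energy estimate for the Liouville normal form. The identity $E'=-Q'u_k'^2/Q^2$, the bound $\int_{c/k}^{\pi/2}|Q'|/Q\,d\theta\lesssim c^{-2}$, and the junction estimates $|u_k(\theta_0)|\lesssim k^{-1/2}$ and $|u_k'(\theta_0)|\lesssim k^{1/2}$ (the latter via the derivative formula and the endpoint bound for $P^{(\alpha+1,\beta+1)}_{k-1}$) all check out.

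\textbf{What each buys.} The paper's proof is shorter and needs no matching at a junction point. However, it rests on Hilb's asymptotics with a uniform remainder, which is a fairly deep black box. Yours uses only the Jacobi differential equation and the explicit series, so it is more self-contained.

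\textbf{One detail to add.} For $\beta>\frac12$, the term $\frac{1/4-\beta^2}{4\cos^2(\theta/2)}$ lies between $-\frac{\beta^2-1/4}{2}$ and $-\frac{\beta^2-1/4}{4}$ on all of $(0,\frac\pi2]$. So $Q\gtrsim N^2>0$ there holds only for $k\ge k_0(\alpha,\beta)$, and enlarging $c$ does not help. The finitely many smaller $k$ are absorbed into the constant, exactly as the paper does.

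Your fallback would also prove the lemma outright: Theorem 8.21.13, or more directly Szeg\H{o}'s estimate (7.32.5), combined with the endpoint bound. That version is closest in spirit to the paper, which itself uses the Darboux formula in its optimality remark.
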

\subsection{An auxiliary function}
In this subsection, we mainly study an auxiliary function defined by
\begin{eqnarray*}
g_\mu(\lambda,m,\beta)
:=\int\limits_0^\pi\frac{\sin^m\theta }{(1-2\lambda\cos\theta+\lambda^2+\mu^2)^{\frac{m}{2}+\beta}}d\theta,
\end{eqnarray*}
where $\lambda\in(-1,1)$, $m\in\mathbb{N}$,  $\beta>0$ and $\mu>0$.
We can readily see that $g_\mu$ is even, infinitely differentiable (and real analytic) in $\lambda\in(-1,1)$ and its Taylor's series at $0$ has the form:
\begin{eqnarray*}
g_\mu(\lambda,m,\beta)=\sum^\infty_{k=0}\frac{\frac{\partial^{2k}g_\mu}{\partial \lambda^{2k}}(0,m,\beta)}{(2k)!}\lambda^{2k}.
\end{eqnarray*}
Then we have
\begin{lemma}\label{Taylor-series-coefficients}
Let the integer $m\geq2$, $\beta>0$ and $\mu>0$. Then, for any integer $k\geq1$,
\begin{eqnarray}\label{claim}
\begin{split}
&\frac{\partial^{2k}g_\mu }{\partial \lambda^{2k}}(0,m,\beta)
=B\Big(\frac12,\frac{m+1}{2}\Big)(2k-1)!!\displaystyle\prod^{k-1}_{j=0}\frac{(2\beta+2j)(m+2\beta+2j)}{m+2j+2}\\
&\ \ \ \ \ \ \ \ \ \ \ \  \ \ \ \ \ \ \ \ \ \ \ \ \ \
\times
\frac{1+\displaystyle\sum^{k}_{j=1}(-1)^j\binom{k}{j}\Big(\displaystyle\prod^{j-1}_{l=0}\frac{m+2(k-l)}{2\beta+2l}\Big)\mu^{2j}}{(1+\mu^2)^{\frac{m}{2}+\beta+2k}}.
\end{split}
\end{eqnarray}

\end{lemma}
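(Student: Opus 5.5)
Set $c:=1+\mu^2$ and write the denominator as $(c-2\lambda\cos\theta+\lambda^2)^{-(\frac m2+\beta)}$. The strategy is to expand this in powers of $\lambda$ by the binomial (hypergeometric) series, read off the coefficient of $\lambda^{2k}$, and integrate against $\sin^m\theta$ term by term using Beta integrals. Put $s=\frac m2+\beta$ and $u=2\lambda\cos\theta-\lambda^2$. For $|\lambda|$ small we have
\[
(c-u)^{-s}=\sum_{N\ge0}\frac{(s)_N}{N!}c^{-s-N}u^N,\qquad u^N=\sum_{i=0}^N\binom{N}{i}(2\cos\theta)^{i}\lambda^{i}(-\lambda^2)^{N-i}.
\]
The coefficient of $\lambda^{2k}$ comes from the pairs with $i+2(N-i)=2k$. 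Since $\int_0^\pi\sin^m\theta\cos^{i}\theta\,d\theta$ vanishes for odd $i$, only $i=2p$ survives, and then $N=2k-p$ with $0\le p\le k$. The even moments are
\[
\int_0^\pi\sin^m\theta\cos^{2p}\theta\,d\theta=B\Big(\frac{m+1}{2},p+\frac12\Big)=B\Big(\frac12,\frac{m+1}{2}\Big)\frac{(1/2)_p}{((m+1)/2+1/2)_p}=B\Big(\frac12,\frac{m+1}{2}\Big)\frac{(2p-1)!!}{\prod_{l=0}^{p-1}(m+2+2l)}.
\]
This gives
\[
\frac{1}{(2k)!}\frac{\partial^{2k}g_\mu}{\partial\lambda^{2k}}(0)=B\Big(\tfrac12,\tfrac{m+1}{2}\Big)\sum_{p=0}^k\frac{(s)_{2k-p}}{(2k-p)!}\binom{2k-p}{2p}(-1)^{k-p}4^p\frac{(2p-1)!!}{\prod_{l<p}(m+2+2l)}\,c^{-s-2k+p}.
\]

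Next I would convert this into the stated form. Factor out $c^{-s-2k}$, so the $p$-th term carries $c^{p}=(1+\mu^2)^p$. Expand $(1+\mu^2)^p$ binomially and regroup by powers $\mu^{2j}$. The coefficient of $\mu^{0}$ should match $(2k-1)!!\prod_{j<k}\frac{(2\beta+2j)(m+2\beta+2j)}{m+2j+2}$, and the coefficient of $\mu^{2j}$ should carry the extra factor $(-1)^j\binom kj\prod_{l<j}\frac{m+2(k-l)}{2\beta+2l}$. The cleaner route, which I would actually follow, is to use the Gegenbauer-type generating function: with $x=\cos\theta$, set $t=\lambda/\sqrt c$ and write
\[
(c-2\lambda x+\lambda^2)^{-s}=c^{-s}\big(1-2tx+t^2 c^{-0}\cdot\ldots\big)^{-s},
\]
which is not exactly Gegenbauer because $\lambda^2\neq t^2c$ unless $\mu=0$. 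So I would keep the double sum, and for fixed $k$ recognise the inner sum in $p$ as a terminating ${}_2F_1$ in the variable $c$ (or in $1/c$). Pfaff's identity \eqref{Pfaffs-identity}, applied with $z$ chosen so that $\frac{z}{z-1}$ involves $-\mu^2$, should turn it into a terminating ${}_2F_1(-k,\cdot;\cdot;-\mu^2)$. That is exactly a sum $\sum_j(-1)^j\binom kj(\cdots)\mu^{2j}$ with Pochhammer ratios, which I would then match to $\prod_{l<j}\frac{m+2(k-l)}{2\beta+2l}$. The normalisation at $\mu=0$ I would verify separately via Chu--Vandermonde \eqref{Chu-Vandermonde}: the $\mu=0$ sum must collapse to the product of $(2\beta+2j)(m+2\beta+2j)/(m+2j+2)$ times $(2k-1)!!$, using $(s)_{2k}=4^{-k}\prod_{j<k}(2s+2j)(2s+2j+1)$-type duplication identities.

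The main obstacle is this middle step: writing the $p$-sum, with duplication formulas $(s)_{2k-p}$ and $\binom{2k-p}{2p}$, as a single hypergeometric function whose parameters make Pfaff plus Chu--Vandermonde apply, and tracking the constants so they agree exactly with \eqref{claim}. If the direct manipulation proves unwieldy, my fallback is induction on $k$ via a differential identity. Differentiating under the integral and integrating by parts in $\theta$ against $\sin^m\theta$ gives a recursion in $(m,\beta)$: the factor $\frac{(2\beta)(m+2\beta)}{m+2}$ arises from shifting $m\to m+2$, $\beta\to\beta+1$. I would then check that the right side of \eqref{claim} satisfies the same recursion and initial value at $k=1$, computed directly.
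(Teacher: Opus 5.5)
Your main route (binomial expansion in $\lambda$, Beta moments, terminating hypergeometric sums) differs from the paper's and can be completed. As written, however, the displayed coefficient formula is wrong, and the identification you call the main obstacle is not carried out.

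With $i=2p$, the condition $i+2(N-i)=2k$ forces $N=k+p$, not $N=2k-p$. Your sign $(-1)^{k-p}$ is the one for $N=k+p$, but $\binom{2k-p}{2p}$, $(s)_{2k-p}/(2k-p)!$ and $c^{-s-2k+p}$ are not. With your formula at $k=1$, the $p=1$ term vanishes because $\binom12=0$. You would get $\partial_\lambda^2 g_\mu(0,m,\beta)=-B(\tfrac12,\tfrac{m+1}2)\,s(s+1)c^{-s-2}$, which is negative even at $\mu=0$. The true value is $B(\tfrac12,\tfrac{m+1}2)\frac{2s}{m+2}(2\beta-(m+2)\mu^2)c^{-s-2}$. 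The correct coefficient is
\[
\frac{1}{(2k)!}\frac{\partial^{2k}g_\mu}{\partial\lambda^{2k}}(0,m,\beta)=B\Big(\frac12,\frac{m+1}{2}\Big)\sum_{p=0}^{k}\frac{(s)_{k+p}}{(k+p)!}\binom{k+p}{2p}(-1)^{k-p}4^p\frac{(\frac12)_p}{(\frac m2+1)_p}\,c^{-s-k-p}.
\]
The expansion is legitimate for $|\lambda|<\sqrt2-1$, where $|u|\le 2|\lambda|+\lambda^2<1<c$ gives absolute convergence uniformly in $\theta$. Now use $\binom{k+p}{2p}/(k+p)!=1/((2p)!(k-p)!)$, $(2p)!=4^pp!(\frac12)_p$, $(s)_{k+p}=(s)_k(s+k)_p$ and $(-1)^{k-p}/(k-p)!=(-1)^k(-k)_p/k!$. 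The sum collapses to $B(\frac12,\frac{m+1}{2})\frac{(-1)^k(s)_k}{k!}\,c^{-s-k}\,{}_2F_1(-k,s+k;\frac m2+1;\frac1c)$, and no duplication formula is needed.

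For the identification, Pfaff's identity alone does not reach the argument $-\mu^2$. Applied at $z=1/c$ it produces $-1/\mu^2$, so you would still need a reversal of the terminating series or the terminating $z\mapsto 1-z$ formula. The route you set aside closes using only \eqref{Chu-Vandermonde}:
\begin{itemize}
\item Write $c^{-s-k-p}=c^{-s-2k}(1+\mu^2)^{k-p}$, expand, and use $\binom{k-p}{j}=\binom kj\frac{(j-k)_p}{(-k)_p}$.
\item The coefficient of $\mu^{2j}$ becomes $\frac{(-1)^k(s)_k}{k!}\binom kj\,{}_2F_1(j-k,s+k;\frac m2+1;1)=\frac{(-1)^k(s)_k}{k!}\binom kj\frac{(1-\beta-k)_{k-j}}{(\frac m2+1)_{k-j}}$.
\item The identities $(1-\beta-k)_{k-j}=(-1)^{k-j}(\beta)_k/(\beta)_j$ and $(\frac m2+1)_k=(\frac m2+1)_{k-j}(\frac m2+k-j+1)_j$ turn this into $\frac{(s)_k(\beta)_k}{k!(\frac m2+1)_k}(-1)^j\binom kj\prod_{l=0}^{j-1}\frac{m+2(k-l)}{2\beta+2l}$.
\item Since $\frac{(2k)!}{k!}\frac{(s)_k(\beta)_k}{(\frac m2+1)_k}=(2k-1)!!\prod_{j=0}^{k-1}\frac{(2\beta+2j)(m+2\beta+2j)}{m+2j+2}$, this is exactly \eqref{claim}.
\end{itemize}

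Once repaired, your argument is a short closed-form proof. It also identifies the polynomial in \eqref{claim} as ${}_2F_1(-k,-\frac m2-k;\beta;-\mu^2)$; for $\beta=\frac12$ this is the representation of $\mathcal P_{n,k}$ used in the proof of Lemma \ref{Decay-rate}.

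The paper instead inducts on $k$ via \eqref{second-order}. That recursion comes from differentiating twice and using $(\lambda-\cos\theta)^2=(1-2\lambda\cos\theta+\lambda^2+\mu^2)-\sin^2\theta-\mu^2$, not from integration by parts. It involves three shifted functions, including the term $\mu^2 g_\mu(\lambda,m,\beta+2)$ that your fallback omits. It is then followed by the lengthy verification of $\mathcal I_0,\dots,\mathcal I_{k+1}$. As sketched, your fallback does not yet specify a recursion that could actually be checked.
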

The proof of this lemma will be postponed in the Appendix \ref{appendix}.

\begin{remark}\label{remark-4.1}
We also need another auxiliary function defined by
\begin{eqnarray*}
g_0(\lambda,m,\beta):=\int\limits_0^\pi\frac{\sin^m\theta}{(1-2\lambda\cos\theta+\lambda^2)^{\frac{m}{2}+\beta}}d\theta,
\end{eqnarray*}
where $\lambda\in(-1,1)$, $m\in\mathbb{N}$ and $\beta>0$.
Clearly, $g_0$ is also even, infinitely differentiable (and real analytic) in $\lambda\in(-1,1)$ and its Taylor's series at $0$ has the form:
\begin{eqnarray*}
g_0(\lambda,m,\beta)=\sum^\infty_{k=0}\frac{\frac{\partial^{2k}g_0}{\partial \lambda^{2k}}(0,m,\beta)}{(2k)!}\lambda^{2k}.
\end{eqnarray*}
From the proof of Lemma \ref{Taylor-series-coefficients}, we can see that, for $k\geq1$,
\begin{eqnarray}\label{claim-g1}
\begin{split}
&\frac{\partial^{2k}g_0}{\partial \lambda^{2k}}(0,m,\beta)
=B\Big(\frac12,\frac{m+1}{2}\Big)(2k-1)!!\displaystyle\prod^{k-1}_{j=0}\frac{(2\beta+2j)(m+2\beta+2j)}{m+2j+2},
\end{split}
\end{eqnarray}
which corresponds to the special case $\mu=0$ of \eqref{claim} and was derived in the previous work \cite{[Zhang-JAM27]}.
\end{remark}

The functions $g_\mu(\cdot,m,\beta)$ and $g_0(\cdot,m,\beta)$ with $m=n\geq2$ and $\beta=\frac12$ will play a vital role in this paper (see, e.g., \eqref{velocity-expression}).
For the convenience of our later application, we denote
\begin{eqnarray}\label{G-definition}
G(\lambda)
:=\int\limits_0^\pi\frac{\sin^n\theta }{(1-2\lambda\cos\theta+\lambda^2)^{\frac{n+1}{2}}}d\theta
\end{eqnarray}
and
\begin{eqnarray}\label{G-miu-definition}
\widetilde{G}(\lambda,\mu)
:=\int\limits_0^\pi\frac{\sin^n\theta }{(1-2\lambda\cos\theta+\lambda^2+\mu^2)^{\frac{n+1}{2}}}d\theta,
\end{eqnarray}
where $\lambda\in(-1,1)$,  $n\geq2$ and $\mu>0$. Clearly,
\begin{eqnarray}\label{0-value}
G(0)=B\Big(\frac12,\frac{n+1}{2}\Big),~\widetilde{G}(0,\mu)=\frac{B(\frac12,\frac{n+1}{2})}{(1+\mu^2)^{\frac{n+1}{2}}} ~{\rm and}~ \frac{\partial\widetilde{G}}{\partial\mu}(0,\mu)=-\frac{(n+1) B(\frac12,\frac{n+1}{2})\mu}{(1+\mu^2)^{\frac{n+3}{2}}}
\end{eqnarray}
and it is easy to check that, for any $\lambda>0$ and $\lambda\neq1$ (notice that $G(\lambda)$ is not defined at 1),
\begin{eqnarray}\label{transform-identity}
G\Big(\frac{1}{\lambda}\Big)=\lambda^{n+1}G(\lambda).
\end{eqnarray}
In addition, $G(\cdot)$ is smooth in $[0,1)\cup(1,\infty)$ with a singularity at $1$. The singularity is of order $\ln|\lambda-1|$. Specifically,
\begin{align}\label{singularity-order}
\lim_{\lambda\rightarrow1}\frac{G(\lambda)}{\ln|\lambda-1|}=-1,~~~\lim_{\lambda\rightarrow1}(\lambda-1)G'(\lambda)=-1,
\end{align}
whose proofs will be given later in the Appendix \ref{appendix}.
We also have that, for any $\lambda>0$ and $\mu>0$,
\begin{eqnarray}\label{transform-identity-G}
\widetilde{G}\Big(\frac{1}{\lambda},\mu\Big)=\lambda^{n+1}\widetilde{G}(\lambda,\lambda\mu).
\end{eqnarray}
As a corollary of Lemma \ref{Taylor-series-coefficients} and Remark \ref{remark-4.1}, we immediately have
\begin{corollary}\label{Maclaurin-series}
For any $\lambda\in(-1,1)$ and all $\mu>0$, it holds that
\begin{align}\label{Taylor}
G(\lambda)-\widetilde{G}(\lambda,\mu)
=\sum^\infty_{k=0}a_{2k}b_k(\mu)\lambda^{2k},
\end{align}
where
\begin{eqnarray*}
a_{2k}
=
\begin{cases}
B\Big(\frac12,\frac{n+1}{2}\Big),
&  \mbox{if $k=0,$ }\\
B\Big(\frac12,\frac{n+1}{2}\Big)\frac{(2k-1)!!(\frac{n}{2}+\frac{1}{2})_k}{(2k)!!(\frac{n}{2}+1)_k},
& \mbox{if $k\geq1,$ } \\
\end{cases}
\end{eqnarray*}
and
\begin{eqnarray}\label{bkmu-1}
b_{k}(\mu)
=
\begin{cases}
1-\frac{1}{(1+\mu^2)^{\frac{n+1}{2}}},
&  \mbox{if $k=0,$ }\\
1-\frac{1+\displaystyle\sum^{k}_{j=1}(-1)^{j}\binom{k}{j}\Big(\displaystyle\prod^{j-1}_{l=0}\frac{n+2(k-l)}{1+2l}\Big)\mu^{2j}}{(1+\mu^2)^{\frac{n+1}{2}+2k}},
& \mbox{if $k\geq1.$ } \\
\end{cases}
\end{eqnarray}
Consequently, for any $\lambda\in(-1,1)$ and all $\mu>0$,
\begin{align}\label{1-derivative-Taylor}
G'(\lambda)-\frac{\partial\widetilde{G}}{\partial\lambda}(\lambda,\mu)
=\sum^\infty_{k=1}2ka_{2k}b_k(\mu)\lambda^{2k-1}
\end{align}
and
\begin{align}\label{2-derivative-Taylor}
G''(\lambda)-\frac{\partial^2\widetilde{G}}{\partial\lambda^2}(\lambda,\mu)
=\sum^\infty_{k=1}2k(2k-1)a_{2k}b_k(\mu)\lambda^{2k-2}.
\end{align}
\end{corollary}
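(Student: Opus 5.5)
The plan is to apply Lemma \ref{Taylor-series-coefficients} with $m=n$, $\beta=\frac12$, $\mu>0$, and Remark \ref{remark-4.1} with the same $m,\beta$. By \eqref{G-definition} and \eqref{G-miu-definition}, $G(\lambda)=g_0(\lambda,n,\frac12)$ and $\widetilde{G}(\lambda,\mu)=g_\mu(\lambda,n,\frac12)$. Both functions are even and real analytic in $\lambda\in(-1,1)$, and their Maclaurin series converge on $(-1,1)$. This is because each integrand is analytic in $\lambda$ for $|\lambda|<1$: the denominator satisfies $1-2\lambda\cos\theta+\lambda^2\geq(1-|\lambda|)^2>0$, and adding $\mu^2$ only makes it larger, so the singularities stay away from the disc. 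We will check this a little more carefully by extending to complex $\lambda$ with $|\lambda|<1$, where the real part of the denominator stays bounded away from zero. Subtracting the two series then gives $G(\lambda)-\widetilde G(\lambda,\mu)=\sum_k \frac{1}{(2k)!}\big(\partial^{2k}_\lambda g_0-\partial^{2k}_\lambda g_\mu\big)(0)\lambda^{2k}$.

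The key step is to identify the coefficients. For $k=0$, \eqref{0-value} gives $G(0)-\widetilde G(0,\mu)=B(\frac12,\frac{n+1}{2})\big(1-(1+\mu^2)^{-\frac{n+1}{2}}\big)=a_0b_0(\mu)$. For $k\geq1$, \eqref{claim-g1} and \eqref{claim} share the common factor $B(\frac12,\frac{n+1}{2})(2k-1)!!\prod_{j=0}^{k-1}\frac{(1+2j)(n+1+2j)}{n+2j+2}$, and \eqref{claim} carries an extra factor that equals exactly $1-b_k(\mu)$ from \eqref{bkmu-1} when $2\beta+2l=1+2l$ and $\frac m2+\beta+2k=\frac{n+1}{2}+2k$. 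So the difference equals the common factor times $b_k(\mu)$. It remains to divide by $(2k)!$ and simplify:
\[
\prod_{j=0}^{k-1}(1+2j)=(2k-1)!!,\quad \prod_{j=0}^{k-1}(n+1+2j)=2^k\Big(\tfrac{n+1}{2}\Big)_k,\quad \prod_{j=0}^{k-1}(n+2+2j)=2^k\Big(\tfrac n2+1\Big)_k .
\]
Since $(2k)!=(2k)!!(2k-1)!!$, the quotient $\frac{(2k-1)!!\,(2k-1)!!}{(2k)!}$ reduces to $\frac{(2k-1)!!}{(2k)!!}$, and we obtain $a_{2k}$ as stated. This yields \eqref{Taylor}.

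Finally, \eqref{1-derivative-Taylor} and \eqref{2-derivative-Taylor} follow by differentiating the power series term by term, which is legitimate inside its interval of convergence $(-1,1)$. The only real obstacle is the convergence and analyticity justification on the full interval $(-1,1)$. If the complex-extension argument is delicate, a fallback is to use $a_{2k}=O(k^{-1/2})$ together with boundedness of $b_k(\mu)$ in $k$ for fixed $\mu$ (or simply to use the radius of convergence of each separate series) to get convergence of \eqref{Taylor} for $|\lambda|<1$.
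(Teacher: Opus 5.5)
Your proposal is correct and follows the paper's route: specialize Lemma \ref{Taylor-series-coefficients} and Remark \ref{remark-4.1} to $m=n$, $\beta=\frac12$, subtract, simplify using $(2k)!=(2k)!!(2k-1)!!$, and differentiate termwise inside $(-1,1)$. One small fix is needed in your analyticity check, which the paper itself simply asserts.

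For complex $\lambda$ the real part of the denominator is not bounded away from zero: take $\lambda=0.9e^{i\pi/4}$, $\theta=0$ and $\mu$ small. Also, ``adding $\mu^2$ only makes it larger'' is valid only for real $\lambda$. Instead, observe that $1+\mu^2-2\lambda\cos\theta+\lambda^2=-c$ with $c\geq0$ forces $|\lambda|^2=1+\mu^2+c\geq1$ for every $\mu\geq0$. Hence on $|\lambda|<1$ the base never meets $(-\infty,0]$, the principal power is analytic, and both Maclaurin series converge to $G$ and $\widetilde G(\cdot,\mu)$ on $(-1,1)$.
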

We also need the Taylor series at $0$ for the functions $\frac{\partial \widetilde{G}}{\partial \mu}(\cdot,\mu)$ and $\frac{\partial^2 \widetilde{G}}{\partial\lambda\partial \mu}(\cdot,\mu)$.
\begin{corollary}\label{1-mu-2-lambda-mu}
For any $\lambda\in(0,1)$ and all $\mu>0$, it holds that
\begin{align}\label{1-mu}
\frac{\partial \widetilde{G}}{\partial \mu}(\lambda,\mu)
=-(n+1)\mu\sum^\infty_{k=0}\widetilde{a}_{2k}\widetilde{b}_k(\mu)\lambda^{2k},
\end{align}
where
\begin{eqnarray*}
\widetilde{a}_{2k}
=
\begin{cases}
B\Big(\frac12,\frac{n+1}{2}\Big),
&  \mbox{if $k=0,$ }\\
B\Big(\frac12,\frac{n+1}{2}\Big)\frac{(2k+1)!!(\frac{n}{2}+\frac{3}{2})_k}{(2k)!!(\frac{n}{2}+1)_k},
& \mbox{if $k\geq1,$ } \\
\end{cases}
\end{eqnarray*}
and
\begin{eqnarray}\label{bkmu-2}
\widetilde{b}_{k}(\mu)
=
\begin{cases}
\frac{1}{(1+\mu^2)^{\frac{n+3}{2}}},
&  \mbox{if $k=0,$ }\\
\frac{1+\displaystyle\sum^{k}_{j=1}(-1)^{j}\binom{k}{j}\Big(\displaystyle\prod^{j-1}_{l=0}\frac{n+2(k-l)}{3+2l}\Big)\mu^{2j}}{(1+\mu^2)^{\frac{n+3}{2}+2k}},
& \mbox{if $k\geq1.$ } \\
\end{cases}
\end{eqnarray}
Consequently, for any $\lambda\in(-1,1)$ and all $\mu>0$,
\begin{align}\label{2-lambda-mu}
\frac{\partial^2 \widetilde{G}}{\partial\lambda\partial \mu}(\lambda,\mu)
=-(n+1)\mu\sum^\infty_{k=1}2k\widetilde{a}_{2k}\widetilde{b}_k(\mu)\lambda^{2k-1},
\end{align}
\end{corollary}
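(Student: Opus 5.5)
Differentiate $\widetilde{G}$ under the integral sign in $\mu$ and reduce to Lemma \ref{Taylor-series-coefficients} with a shifted exponent. For $\lambda\in(-1,1)$ and $\mu>0$ the denominator $1-2\lambda\cos\theta+\lambda^2+\mu^2\ge(1-|\lambda|)^2+\mu^2>0$, so differentiation is legitimate and
\begin{align*}
\frac{\partial\widetilde{G}}{\partial\mu}(\lambda,\mu)=-(n+1)\mu\int\limits_0^\pi\frac{\sin^n\theta}{(1-2\lambda\cos\theta+\lambda^2+\mu^2)^{\frac{n}{2}+\frac32}}d\theta=-(n+1)\mu\, g_\mu\Big(\lambda,n,\tfrac32\Big).
\end{align*}
Since $g_\mu(\cdot,n,\frac32)$ is even and real analytic on $(-1,1)$, its Maclaurin series is $\sum_k \frac{\partial^{2k}_\lambda g_\mu(0,n,\frac32)}{(2k)!}\lambda^{2k}$. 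The task is therefore to show that
\begin{align*}
\frac{\partial^{2k}_\lambda g_\mu(0,n,\tfrac32)}{(2k)!}=\widetilde{a}_{2k}\widetilde{b}_k(\mu).
\end{align*}
The series converges on all of $(-1,1)$, not only on the interval $(0,1)$ stated, because analyticity there holds uniformly for fixed $\mu>0$; one could invoke the Bernstein bound of Lemma \ref{Bernstein-Lemma} if the expansion is written through Jacobi polynomials, though analyticity alone suffices.

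For $k=0$, evaluate directly: $g_\mu(0,n,\frac32)=(1+\mu^2)^{-\frac{n+3}{2}}\int_0^\pi\sin^n\theta\,d\theta=B(\frac12,\frac{n+1}{2})(1+\mu^2)^{-\frac{n+3}{2}}$, which equals $\widetilde{a}_0\widetilde{b}_0(\mu)$. For $k\ge1$, apply \eqref{claim} with $m=n$ and $\beta=\frac32$.

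The fractional factor in \eqref{claim} becomes
\begin{align*}
\frac{1+\sum_{j=1}^k(-1)^j\binom{k}{j}\prod_{l=0}^{j-1}\frac{n+2(k-l)}{3+2l}\,\mu^{2j}}{(1+\mu^2)^{\frac{n+3}{2}+2k}},
\end{align*}
since $2\beta+2l=3+2l$. This is exactly $\widetilde{b}_k(\mu)$.

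The remaining prefactor is
\begin{align*}
B\Big(\tfrac12,\tfrac{n+1}{2}\Big)\frac{(2k-1)!!}{(2k)!}\prod_{j=0}^{k-1}\frac{(3+2j)(n+3+2j)}{n+2+2j}.
\end{align*}
This requires a routine Pochhammer simplification. First, $\prod_{j=0}^{k-1}(3+2j)=\frac{(2k+1)!!}{1}$. Second, $\prod_{j=0}^{k-1}\frac{n+3+2j}{n+2+2j}=\frac{(\frac n2+\frac32)_k}{(\frac n2+1)_k}$, because the factors of $2$ cancel. Third, $\frac{(2k-1)!!}{(2k)!}=\frac{1}{(2k)!!}$. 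Together these give $B(\frac12,\frac{n+1}{2})\frac{(2k+1)!!(\frac n2+\frac32)_k}{(2k)!!(\frac n2+1)_k}=\widetilde{a}_{2k}$, which proves \eqref{1-mu}.

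For \eqref{2-lambda-mu}, differentiate \eqref{1-mu} term by term in $\lambda$. This is justified because a power series may be differentiated inside its interval of convergence, and the $k=0$ term vanishes. Alternatively, since $\widetilde{G}$ is jointly smooth, $\partial_\lambda\partial_\mu\widetilde{G}=\partial_\lambda(-(n+1)\mu g_\mu(\lambda,n,\frac32))$, which gives the same series.

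The main obstacle is only the bookkeeping in the coefficient identity, in particular checking the double factorials against Pochhammer symbols. The analytic justification of differentiating under the integral sign and term by term is routine because the denominators stay bounded away from zero when $\mu>0$.
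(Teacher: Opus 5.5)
Your proposal is correct and follows the paper's proof: differentiate under the integral sign in $\mu$, apply Lemma \ref{Taylor-series-coefficients} with $m=n$ and $\beta=\frac32$, then differentiate the power series term by term. Your double-factorial and Pochhammer bookkeeping matching the prefactor to $\widetilde{a}_{2k}$ is right; the paper leaves that step implicit.

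The one soft spot is your remark that ``analyticity alone suffices'' for the Maclaurin series to converge on $(-1,1)$. Real analyticity on $(-1,1)$ does not by itself force the radius of convergence at $0$ to be at least $1$. What does work here is that the roots in $\lambda$ of $1-2\lambda\cos\theta+\lambda^2+\mu^2$ are complex conjugates of modulus $\sqrt{1+\mu^2}>1$. Hence $g_\mu(\cdot,n,\frac32)$ is holomorphic on the disk of radius $\sqrt{1+\mu^2}$, and its series converges there; the Bernstein bound is not needed.
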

\textbf{Proof.} By \eqref{G-miu-definition}, we find that
\begin{align}\label{derivative-mu}
\frac{\partial \widetilde{G}}{\partial\mu}(\lambda,\mu)
=-(n+1)\mu\int\limits_0^\pi\frac{\sin^n\theta }{(1-2\lambda\cos\theta+\lambda^2+\mu^2)^{\frac{n+3}{2}}}d\theta.
\end{align}
Letting $m=n$ and $\beta=\frac32$ in Lemma \ref{Taylor-series-coefficients}, we can obtain that
\begin{align*}
\int\limits_0^\pi\frac{\sin^n\theta }{(1-2\lambda\cos\theta+\lambda^2+\mu^2)^{\frac{n+3}{2}}}d\theta
=\sum^\infty_{k=0}\widetilde{a}_{2k}\widetilde{b}_k(\mu)\lambda^{2k},
\end{align*}
which along with \eqref{derivative-mu} gives \eqref{1-mu}. Differentiation term by term on \eqref{1-mu} directly gives \eqref{2-lambda-mu}.
\hfill\hfill$\square$\vskip12pt
It should be remarked that, for any $k\geq0$,
\begin{align}\label{widetilde-a-2k-relation}
\widetilde{a}_{2k}=\frac{(2k+1)(2k+n+1)}{n+1}a_{2k}
\end{align}
and
\begin{align}\label{asympotic}
\lim_{k\rightarrow\infty}ka_{2k}
&=B\Big(\frac12,\frac{n+1}{2}\Big)\lim_{k\rightarrow\infty}\Big[k\cdot\frac{(\frac12)_k}{(1)_k}\cdot\frac{(\frac{n}{2}+\frac{1}{2})_k}{(\frac{n}{2}+1)_k}\Big]\nonumber\\
&=B\Big(\frac12,\frac{n+1}{2}\Big)
\lim_{k\rightarrow\infty}\Big[k\cdot\frac{\Gamma(1)}{\Gamma(\frac12)}k^{\frac12-1}\cdot\frac{\Gamma(\frac{n}{2}+1)}{\Gamma(\frac{n}{2}+\frac{1}{2})}k^{(\frac{n}{2}+\frac{1}{2})-(\frac{n}{2}+1)}\Big]\\
&=B\Big(\frac12,\frac{n+1}{2}\Big)
\frac{\Gamma(\frac{n}{2}+1)}{\Gamma(\frac12)\Gamma(\frac{n}{2}+\frac{1}{2})}=1\nonumber.
\end{align}
The key inequality in the following lemma is of independent interest, which  implies a bit surprising fact that all the Taylor coefficients of $G(\cdot)-\widetilde{G}(\cdot,\mu)$ are positive.

\begin{lemma}\label{Key-inequality}
Let $N>-1$ and $K\geq1$ be integers. Then, for every $0\neq\mu\in\mathbb{R}$,
\begin{align}\label{Key}
(1+\mu^2)^{\frac{N+1}{2}+2K}
>\Big|1+\displaystyle\sum^{K}_{j=1}(-1)^j\binom{K}{j}\Big(\displaystyle\prod^{j-1}_{l=0}\frac{N+2(K-l)}{1+2l}\Big)\mu^{2j}\Big|.
\end{align}
\end{lemma}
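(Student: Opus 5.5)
The plan is to identify the polynomial on the right of \eqref{Key} as a Jacobi polynomial, and then bound it by a monotone Sonine-type energy built from the Jacobi differential equation. Put $t=\mu^2>0$ and let $P(t)$ denote the polynomial inside the absolute value.

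\emph{Step 1: rewrite $P$ as a Jacobi polynomial.} We have
\[
\frac{N+2K-2l}{1+2l}=\frac{K+\frac N2-l}{\frac12+l},
\qquad
(-1)^j\binom Kj=\frac{(-K)_j}{j!}.
\]
Hence
\[
P(t)={}_2F_1\Big(-K,-K-\tfrac N2;\tfrac12;-t\Big).
\]
Pfaff's identity \eqref{Pfaffs-identity} turns this into
\[
P(t)=(1+t)^K\,{}_2F_1\Big(-K,K+\tfrac{N+1}{2};\tfrac12;\tfrac{t}{1+t}\Big).
\]
Now apply \eqref{hypergeometric-Jacobi} with $\alpha=-\tfrac12$, $\beta=\tfrac N2$ and $x=\frac{1-t}{1+t}\in(-1,1)$, for which $\frac{1-x}{2}=\frac{t}{1+t}$ and $1+x=\frac{2}{1+t}$. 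This gives
\[
P(t)=\frac{K!}{(\frac12)_K}\Big(\frac{2}{1+x}\Big)^{K}P_K^{(-\frac12,\frac N2)}(x).
\]
So \eqref{Key} is equivalent to
\[
(1+x)^{2K+N+1}\big(P_K^{(-\frac12,\frac N2)}(x)\big)^2<2^{2K+N+1}\Big(\frac{(\frac12)_K}{K!}\Big)^2
\qquad\text{for } x\in(-1,1).
\]
The right-hand side is exactly the value of the left-hand side at $x=1$, since $P_K^{(\alpha,\beta)}(1)=(\alpha+1)_K/K!$. This matches the degenerate case $\mu=0$, where \eqref{Key} becomes an equality.

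\emph{Step 2: a Sonine-type function.} Write $p=P_K^{(-\frac12,\frac N2)}$ and $\Lambda=K(K+\alpha+\beta+1)>0$. The function $p$ satisfies
\[
(1-x^2)p''+(\beta-\alpha-(\alpha+\beta+2)x)p'+\Lambda p=0 .
\]
Set $S=p^2+\frac{1-x^2}{\Lambda}(p')^2$. Substituting the ODE to eliminate $p''$ gives
\[
S'=\frac{2(p')^2}{\Lambda}\big[(\alpha+\beta+1)x+\alpha-\beta\big]=\frac{(N+1)(x-1)}{\Lambda}(p')^2\le0 .
\]
Here $\alpha=-\tfrac12$ is used crucially: the bracket collapses to $\frac{N+1}{2}(x-1)$.

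\emph{Step 3: the weighted energy is strictly increasing.} Let $H(x)=(1+x)^{2K+N+1}S(x)$. Then
\[
H'=(1+x)^{2K+N}\Big[(2K+N+1)p^2+\big(2K+N+1-(N+1)\big)\frac{1-x^2}{\Lambda}(p')^2\Big]
=(1+x)^{2K+N}\Big[(2K+N+1)p^2+2K\,\frac{1-x^2}{\Lambda}(p')^2\Big].
\]
This is strictly positive on $(-1,1)$, because $p$ and $p'$ cannot vanish simultaneously: a nontrivial solution of the second-order ODE cannot have a double zero at a regular point. Hence for $x\in(-1,1)$,
\[
(1+x)^{2K+N+1}p(x)^2\le H(x)<H(1)=2^{2K+N+1}p(1)^2 ,
\]
which is the reformulated inequality. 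Returning to $\mu$ yields \eqref{Key} with strict inequality for $\mu\neq0$.

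The only delicate point is the bookkeeping in Step 1, namely getting the parameters $(\alpha,\beta)=(-\frac12,\frac N2)$ exactly right. The monotonicity argument depends on the favorable sign of $(\alpha+\beta+1)x+\alpha-\beta$, which forces $\alpha=-\frac12$. I will double-check it on small cases, for example $K=1$. There $P(t)=1-(N+2)t$ and the claim is $|1-(N+2)t|<(1+t)^{(N+5)/2}$, which is evident for $t>0$.
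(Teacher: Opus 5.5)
Your proof is correct, and it takes a genuinely different route from the paper.

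\textbf{The paper's route.} It first shows, via the Fa\`{a} di Bruno formula and the Chu--Vandermonde identity, that $\mathcal{P}_{N,K}(\mu^2)(1+\mu^2)^{-\frac{N+1}{2}-2K}$ is the $2K$-th derivative of $(1+\mu^2)^{-\frac{N+1}{2}}$, normalized by its value at $\mu=0$. It then writes $(1+\mu^2)^{-\frac{N+1}{2}}=\int_{\mathbb{R}}e^{i\mu\xi}w_N(\xi)\,d\xi$ with a strictly positive even weight $w_N$. The quotient therefore equals $\int\xi^{2K}\cos(\mu\xi)w_N\,d\xi\big/\int\xi^{2K}w_N\,d\xi$, a normalized characteristic function, whose modulus is $<1$ for $\mu\neq0$.

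\textbf{Your route.} You identify the quotient with $\frac{K!}{(1/2)_K}\big(\frac{1+x}{2}\big)^{K+\frac{N+1}{2}}P_K^{(-\frac12,\frac N2)}(x)$, where $x=\frac{1-\mu^2}{1+\mu^2}$. This is exactly the representation the paper derives only later, in the proof of Lemma~\ref{Decay-rate} (with $q=\frac{1+x}{2}$). You then run a weighted Sonine-type energy argument. I checked the formula for $S'$ and its collapse to $\frac{(N+1)(x-1)}{\Lambda}(p')^2$ at $\alpha=-\frac12$. I also checked the expression for $H'$ and the endpoint value $H(1)=2^{2K+N+1}p(1)^2$. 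All are right. Strictness follows from $H'>0$ on $(-1,1)$, or simply because $H'\geq0$ is a nonzero polynomial times a positive factor and so has only isolated zeros.

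\textbf{What each buys.} The paper's argument explains the inequality conceptually, through positive-definiteness of $(1+\mu^2)^{-\frac{N+1}{2}}$, and needs no Jacobi theory. Its cost is a fairly heavy combinatorial derivative identity, plus a Fourier representation that requires $N>-1$ (for $\Gamma(\frac{N+1}{2})$ and $w_N>0$). Your argument is short and purely real-variable, and it puts Lemma~\ref{Key-inequality} and Lemma~\ref{Decay-rate} into one Jacobi-polynomial framework. It is also slightly more general: define $p$ through the terminating ${}_2F_1$, so that it satisfies the Jacobi equation for any real parameters. Then the monotonicity of $H$ uses only $2K+N+1>0$ (equivalently $\Lambda>0$), so your argument covers all real $N>-2K-1$ verbatim.
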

\textbf{Proof.}
Set
\begin{align}\label{polynomial-definition}
\mathcal{P}_{N,K}(x):=1+\displaystyle\sum^{K}_{j=1}(-1)^j\binom{K}{j}\Big(\displaystyle\prod^{j-1}_{l=0}\frac{N+2(K-l)}{1+2l}\Big)x^{j}.
\end{align}
The desired inequality \eqref{Key} is precisely the one below: for any $0\neq\mu\in\mathbb{R}$,
\begin{align*}
\frac{|\mathcal{P}_{N,K}(\mu^2)|}{(1+\mu^2)^{\frac{N+1}{2}+2K}}<1.
\end{align*}
We identify the quotient on the left as a normalized Fourier transform. For this purpose, we first claim that, for every $\mu\in\mathbb{R}$,
\begin{align}\label{derivative-identity}
\frac{\frac{d^{2K}}{d\mu^{2K}}(1+\mu^2)^{-\frac{N+1}{2}}}{\frac{d^{2K}}{d\mu^{2K}}\Big|_{\mu=0}(1+\mu^2)^{-\frac{N+1}{2}}}
=\frac{\mathcal{P}_{N,K}(\mu^2)}{(1+\mu^2)^{\frac{N+1}{2}+2K}}.
\end{align}
Indeed, for a smooth function $\phi$, the special case of the Fa\`{a} di Bruno formula (see \cite{[Faa]}, and also \cite{[Spindler]} for a modern formulation and a simple proof) corresponding to the map $\mu\mapsto \mu^2$ gives that
\begin{align*}
\frac{d^{2K}}{d\mu^{2K}}\phi(\mu^2)=\displaystyle\sum^{K}_{k=0}\frac{(2K)!4^k}{(2k)!(K-k)!}\mu^{2k}\phi^{(K+k)}(\mu^2).
\end{align*}
Taking $\phi(s)=(1+s)^{-\frac{N+1}{2}}$, we have that, for any $0\leq k\leq K$,
\begin{align*}
\phi^{(K+k)}(s)=(-1)^{K+k}\Big(\frac{N+1}{2}\Big)_{K+k}(1+s)^{-\frac{N+1}{2}-K-k}.
\end{align*}
Hence
\begin{align*}
(-1)^K\frac{d^{2K}}{d\mu^{2K}}(1+\mu^2)^{-\frac{N+1}{2}}
=(2K)!\displaystyle\sum^{K}_{k=0}(-1)^k\frac{4^k(\frac{N+1}{2})_{K+k}}{(2k)!(K-k)!}\frac{\mu^{2k}}{(1+\mu^2)^{\frac{N+1}{2}+K+k}}.
\end{align*}
At $t=0$ this formula reads as
\begin{align*}
(-1)^K\frac{d^{2K}}{d\mu^{2K}}\Big|_{\mu=0}(1+\mu^2)^{-\frac{N+1}{2}}
=\frac{(2K)!(\frac{N+1}{2})_K}{K!}.
\end{align*}
Dividing by this value yields that
\begin{align*}
\frac{\frac{d^{2K}}{d\mu^{2K}}(1+\mu^2)^{-\frac{N+1}{2}}}{\frac{d^{2K}}{d\mu^{2K}}\Big|_{\mu=0}(1+\mu^2)^{-\frac{N+1}{2}}}
=\frac{K!}{(1+\mu^2)^{\frac{N+1}{2}+2K}}\displaystyle\sum^{K}_{k=0}\frac{(-1)^k}{(K-k)!}\frac{4^k(\frac{N+1}{2}+K)_{k}}{(2k)!}\mu^{2k}(1+\mu^2)^{K-k}.
\end{align*}
By the Binomial Theorem, we further compute the right formula as
\begin{align*}
\frac{\frac{d^{2K}}{d\mu^{2K}}(1+\mu^2)^{-\frac{N+1}{2}}}{\frac{d^{2K}}{dt^{2K}}\Big|_{t=0}(1+t^2)^{-\frac{N+1}{2}}}
&=\frac{K!}{(1+\mu^2)^{\frac{N+1}{2}+2K}}\displaystyle\sum^{K}_{k=0}\frac{(-1)^k}{(K-k)!}\frac{4^k(\frac{N+1}{2}+K)_{k}}{(2k)!}\mu^{2k}\displaystyle\sum^{K-k}_{m=0}\binom{K-k}{m}\mu^{2m}\\
&=\frac{1}{(1+\mu^2)^{\frac{N+1}{2}+2K}}\displaystyle\sum^{K}_{k=0}\displaystyle\sum^{K-k}_{m=0}(-1)^k\frac{(\frac{N+1}{2}+K)_k}{(\frac{1}{2})_k}\binom{m+k}{k}\binom{K}{m+k}\mu^{2(m+k)}\\
&=\frac{1}{(1+\mu^2)^{\frac{N+1}{2}+2K}}\displaystyle\sum^{K}_{j=0}\binom{K}{j}\displaystyle\sum^{j}_{k=0}(-1)^k\binom{j}{k}\frac{(\frac{N+1}{2}+K)_k}{(\frac{1}{2})_k}\mu^{2j}\\
&=\frac{1}{(1+\mu^2)^{\frac{N+1}{2}+2K}}\displaystyle\sum^{K}_{j=0}\binom{K}{j}\displaystyle\sum^{j}_{k=0}\frac{(-j)_k(\frac{N+1}{2}+K)_k}{(\frac{1}{2})_kk!}\mu^{2j}\\
&=\frac{1}{(1+\mu^2)^{\frac{N+1}{2}+2K}}\displaystyle\sum^{K}_{j=0}\binom{K}{j}{}_2F_1\Big(-j,\frac{N+1}{2}+K;\frac12;1\Big)\mu^{2j}\\
&=\frac{1}{(1+\mu^2)^{\frac{N+1}{2}+2K}}\displaystyle\sum^{K}_{j=0}\binom{K}{j}\frac{(-\frac{N}{2}-K)_j}{(\frac12)_j}\mu^{2j}\\
&=\frac{1}{(1+\mu^2)^{\frac{N+1}{2}+2K}}\displaystyle\sum^{K}_{j=0}(-1)^j\binom{K}{j}\Big(\displaystyle\prod^{j-1}_{l=0}\frac{N+2(K-l)}{1+2l}\Big)\mu^{2j}\\
&=\frac{\mathcal{P}_{N,K}(\mu^2)}{(1+\mu^2)^{\frac{N+1}{2}+2K}},
\end{align*}
where we also used the identity $(2k)!=4^kk!(\frac{1}{2})_k$ and the finite Chu-Vandermonde identity \eqref{Chu-Vandermonde}.
Thus, the derivative identity \eqref{derivative-identity} follows.

Secondly, we identify the derivative on the left of \eqref{derivative-identity} as a normalized Fourier transform.
Using the Gamma integral
\begin{align*}
(1+\mu^2)^{-\frac{N+1}{2}}=\frac{1}{\Gamma(\frac{N+1}{2})}\int\limits_0^\infty\tau^{\frac{N-1}{2}}e^{-\tau}e^{-\tau \mu^2}d\tau
\end{align*}
and the Fourier representation of a Gaussian
\begin{align*}
e^{-\tau \mu^2}=\frac{1}{2\sqrt{\pi\tau}}\int\limits_{\mathbb{R}}e^{-\frac{\xi^2}{4\tau}}e^{i\mu\xi}d\xi,
\end{align*}
we obtain that
\begin{align}\label{representation-identity}
(1+\mu^2)^{-\frac{N+1}{2}}=\int\limits_{\mathbb{R}}e^{i\mu\xi}w_N(\xi) d\xi,
\end{align}
where
\begin{align*}
w_N(\xi)=\frac{1}{2\sqrt{\pi}\Gamma(\frac{N+1}{2})}\int\limits_0^\infty\tau^{\frac{N}{2}-1}e^{-\tau-\frac{\xi^2}{4\tau}}
d\tau.
\end{align*}
The function $w_N$ is even and strictly positive.
Moreover,
\begin{align*}
\int\limits_{\mathbb{R}}\xi^{2K}w_N(\xi)d\xi
=\frac{2^{2K}}{\sqrt{\pi}}\cdot\frac{\Gamma\Big(K+\frac12\Big)\Gamma\Big(\frac{N+1}{2}+K\Big)}{\Gamma(\frac{N+1}{2})}<\infty.
\end{align*}
This follows directly by interchanging the two positive integrals and evaluating the Gaussian moment.
Consequently, differentiating \eqref{representation-identity} $2K$ times under the integral yields that
\begin{align*}
\frac{d^{2K}}{d\mu^{2K}}(1+\mu^2)^{-\frac{N+1}{2}}
&=(-1)^K\int\limits_{\mathbb{R}}\xi^{2K}e^{i\mu\xi}w_N(\xi) d\xi\\
&=(-1)^K\int\limits_{\mathbb{R}}\xi^{2K}\cos(\mu\xi)w_N(\xi) d\xi.
\end{align*}
At $\mu=0$, this representation reads as
\begin{align*}
\frac{d^{2K}}{d\mu^{2K}}\Big|_{\mu=0}(1+\mu^2)^{-\frac{N+1}{2}}
=(-1)^K\int\limits_{\mathbb{R}}\xi^{2K}w_N(\xi) d\xi.
\end{align*}
Combining these identities with \eqref{derivative-identity}, we find that, for any $0\neq \mu\in\mathbb{R}$,
\begin{align*}
\frac{|\mathcal{P}_{N,K}(\mu^2)|}{(1+\mu^2)^{\frac{N+1}{2}+2K}}
=\frac{\Big|\int\limits_{\mathbb{R}}\xi^{2K}\cos(\mu\xi)w_N(\xi) d\xi\Big|}{\int\limits_{\mathbb{R}}\xi^{2K}w_N(\xi) d\xi}
\leq\frac{\int\limits_{\mathbb{R}}\xi^{2K}|\cos(\mu\xi)|w_N(\xi) d\xi}{\int\limits_{\mathbb{R}}\xi^{2K}w_N(\xi) d\xi}<1.
\end{align*}
The proof of Lemma \ref{Key-inequality} is finished.
\hfill\hfill$\square$\vskip12pt
\begin{remark}
The special case $N=1$ of the key inequality \eqref{Key} may admit an elementary proof.
Indeed, for any $1\leq j\leq K$, we note that
\begin{align*}
\binom{K}{j}\displaystyle\prod^{j-1}_{l=0}\frac{1+2(K-l)}{1+2l}=\binom{2K+1}{2j}.
\end{align*}
It follows that, for any integer $K\geq1$ and any $\mu>0$,
\begin{align*}
&1+\displaystyle\sum^{K}_{j=1}(-1)^j\binom{K}{j}\Big(\displaystyle\prod^{j-1}_{l=0}\frac{1+2(K-l)}{1+2l}\Big)\mu^{2j}
=\displaystyle\sum^{K}_{j=0}(-1)^{j}\binom{2K+1}{2j}\mu^{2j}\\
&~~~~~~~~=\frac{1}{2}\Big[(1+i\mu)^{2K+1}+(1-i\mu)^{2K+1}\Big]\\
&~~~~~~~~=\frac{1}{2}\Big[(\sqrt{1+\mu^2}e^{i\arctan\mu})^{2K+1}+(\sqrt{1+\mu^2}e^{-i\arctan\mu})^{2K+1}\Big]\\
&~~~~~~~~=(1+\mu^2)^{K+\frac12}\cos((2K+1)\arctan\mu),
\end{align*}
which yields that
\begin{align*}
\Big|1+\displaystyle\sum^{K}_{j=1}(-1)^j\binom{K}{j}\Big(\displaystyle\prod^{j-1}_{l=0}\frac{1+2(K-l)}{1+2l}\Big)\mu^{2j}\Big|
&=(1+\mu^2)^{K+\frac12}|\cos((2K+1)\arctan\mu)|\\
&\leq(1+\mu^2)^{K+\frac12}<(1+\mu^2)^{2K+1}.
\end{align*}
\end{remark}
\section{Nonlinear weighted inequalities for the transform $\mathcal{R}_a$}
\hskip\parindent In this section, we present some new nonlinear weighted inequalities for the transform $\mathcal{R}_a$. By abuse of notation, we will not distinguish $f(x)$ and $f(|x|)$ for a radially symmetric function $f$.
\subsection{Improved weighted inequalities for radially nondecreasing functions}
In \cite{[Zhang-JDE2026]}, Zhang first established the pointwise inequality \eqref{pointwise-inequality-JDE} for the transform $\mathcal{R}_a$ acting on the radially nondecreasing function, and then proved the nonlinear integral inequality \eqref{weighted-Ra-monotone} for the transform $\mathcal{R}_a$. In this subsection, we will improve these inequalities in \cite{[Zhang-JDE2026]} for the transform $\mathcal{R}_a$.
\begin{proposition}\label{lower-bound-Ra}
Fix $n\geq2$ and $a>0$. Let $f:\mathbb{R}^n\rightarrow\mathbb{R}$ be a radial, nondecreasing and continuously differentiable function with $\nabla f\in L^1(\mathbb{R}^n)\cap L^\infty(\mathbb{R}^n)$. Then,  for any $x\neq0$, we have
\begin{eqnarray}\label{lower-bound-Ra-inequality}
-\mathcal{R}_a f(x)\cdot\frac{x}{|x|}
\geq\frac{nB(\frac12,\frac{n+1}{2})}{\pi}\frac{1}{|x|^n}\Big[1-\frac{|x|^{n+1}}{(|x|^2+a^2)^{\frac{n+1}{2}}}\Big]\int\limits_0^{|x|} (f(|x|)-f(\varrho))\varrho^{n-1}
d\varrho.
\end{eqnarray}
\end{proposition}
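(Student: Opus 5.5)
Since $\mathcal{R}_a$ is a convolution with $\nabla_y$ of a radial potential, we move the derivative onto $f$ and reduce everything to a one-dimensional integral in $\varrho=|y|$. The kernel of that integral is $G-\widetilde G$, which Corollary \ref{Maclaurin-series} and Lemma \ref{Key-inequality} show is nonnegative.

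\emph{Step 1: integration by parts in $y$.} Observe that
\[
\frac{x-y}{|x-y|^{n+1}}=\frac{1}{n-1}\nabla_y|x-y|^{1-n},\qquad \frac{x-y}{(|x-y|^2+a^2)^{\frac{n+1}{2}}}=\frac{1}{n-1}\nabla_y(|x-y|^2+a^2)^{\frac{1-n}{2}}.
\]
Integrating by parts (the boundary terms vanish since $\nabla f\in L^1\cap L^\infty$ and the potentials are locally integrable) gives
\[
\mathcal{R}_af(x)=-\frac{\Gamma(\frac{n+1}{2})}{(n-1)\pi^{\frac{n+1}{2}}}\int_{\mathbb{R}^n}\Big[|x-y|^{1-n}-(|x-y|^2+a^2)^{\frac{1-n}{2}}\Big]\nabla f(y)\,dy.
\]
For radial $f$ we have $\nabla f(y)=f'(|y|)\,y/|y|$.

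\emph{Step 2: spherical coordinates.} Set $r=|x|$ and write $y=\varrho(\cos\theta\, e+\sin\theta\,\omega)$ with $e=x/r$ and $\omega\in\mathbb{S}^{n-2}$. Then $\frac{y}{|y|}\cdot e=\cos\theta$, and the angular integral is
\[
\int_0^\pi\Big[(r^2-2r\varrho\cos\theta+\varrho^2)^{\frac{1-n}{2}}-(\cdots+a^2)^{\frac{1-n}{2}}\Big]\cos\theta\sin^{n-2}\theta\,d\theta .
\]
Write $\cos\theta\sin^{n-2}\theta=\frac{1}{n-1}\frac{d}{d\theta}\sin^{n-1}\theta$ and integrate by parts in $\theta$. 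This turns the integral into
\[
\frac{r\varrho}{n-1}\int_0^\pi\sin^n\theta\Big[(r^2-2r\varrho\cos\theta+\varrho^2)^{-\frac{n+1}{2}}-(\cdots+a^2)^{-\frac{n+1}{2}}\Big]d\theta .
\]
For $\varrho<r$, with $\lambda=\varrho/r$ and $\mu=a/r$, this equals $\frac{r\varrho}{n-1}\,r^{-(n+1)}\big(G(\lambda)-\widetilde G(\lambda,\mu)\big)$. For $\varrho>r$ the analogous expression holds with $\lambda=r/\varrho$ and $\mu=a/\varrho$ (compare \eqref{transform-identity}, \eqref{transform-identity-G}).

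For the constants, $\omega_{n-2}\Gamma(\frac{n+1}{2})/((n-1)\pi^{\frac{n+1}{2}})=\frac1\pi$, and the extra factor $\frac{1}{n-1}$ from the $\theta$-integration is absorbed in the same way. The result is
\[
-\mathcal{R}_af(x)\cdot\frac{x}{|x|}=\frac1\pi\int_0^\infty f'(\varrho)\,\varrho^{n}\,r\,K(r,\varrho)\,d\varrho,
\]
where $K(r,\varrho)=r^{-(n+1)}(G-\widetilde G)(\varrho/r,a/r)$ for $\varrho<r$, and symmetrically for $\varrho>r$. The logarithmic singularity of $G$ at $\lambda=1$ (see \eqref{singularity-order}) is integrable, so the formula holds without principal values.

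\emph{Step 3: positivity and the lower bound.} By Corollary \ref{Maclaurin-series}, $G(\lambda)-\widetilde G(\lambda,\mu)=\sum_k a_{2k}b_k(\mu)\lambda^{2k}$ with $a_{2k}>0$. Lemma \ref{Key-inequality}, applied with $N=n$ and $K=k$, gives $b_k(\mu)>0$ for $k\geq1$, and $b_0(\mu)>0$ is obvious. Hence $K\geq0$. Since $f'\geq0$, we may drop the region $\varrho>r$ and keep only the $k=0$ term. This gives $G-\widetilde G\geq a_0b_0(\mu)=B(\frac12,\frac{n+1}{2})\big[1-\frac{r^{n+1}}{(r^2+a^2)^{(n+1)/2}}\big]$, and therefore
\[
-\mathcal{R}_af(x)\cdot\frac{x}{|x|}\geq\frac{B(\frac12,\frac{n+1}{2})}{\pi r^n}\Big[1-\frac{r^{n+1}}{(r^2+a^2)^{\frac{n+1}{2}}}\Big]\int_0^r f'(\varrho)\varrho^n\,d\varrho .
\]
Finally, $\int_0^r f'(\varrho)\varrho^nd\varrho=n\int_0^r(f(r)-f(\varrho))\varrho^{n-1}d\varrho$ by integrating $\frac{d}{d\varrho}\big(f(\varrho)-f(r)\big)$ against $\varrho^n$. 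This yields \eqref{lower-bound-Ra-inequality}.

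\emph{Main obstacle.} The delicate points are the rigorous justification of the two integrations by parts together with the correct sign and constant. In particular one must check that the boundary terms vanish, and that the $\theta$-integration is legitimate near $\varrho=r$, where the kernel is singular. I would handle this by first working with $\varrho\ne r$ and then using dominated convergence, based on the integrability of $\ln|\lambda-1|$. Positivity of all coefficients $b_k$ is already supplied by Lemma \ref{Key-inequality}.
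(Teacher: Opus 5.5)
Your proposal is correct and follows the paper's proof. Steps 1--2 derive the representation \eqref{velocity-expression}, which the paper simply quotes from earlier work. Step 3 is exactly the paper's argument: positivity of $b_k(\mu)$ via Lemma \ref{Key-inequality}, keeping only the $k=0$ term on $\varrho<|x|$, then integrating by parts. Your explicit treatment of $\varrho>|x|$ through the inverted variables $\lambda=r/\varrho$, $\mu=a/\varrho$ fills in a point the paper leaves implicit.

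One bookkeeping slip: the $\theta$-integration by parts gives exactly $r\varrho\int_0^\pi\sin^n\theta\,[\cdots]\,d\theta$, not $\frac{r\varrho}{n-1}\int_0^\pi\sin^n\theta\,[\cdots]\,d\theta$. The factor $\frac{1}{n-1}$ from $\cos\theta\sin^{n-2}\theta=\frac{1}{n-1}\frac{d}{d\theta}\sin^{n-1}\theta$ cancels the factor $1-n$ produced by differentiating $(r^2-2r\varrho\cos\theta+\varrho^2)^{\frac{1-n}{2}}$. So there is no extra $\frac{1}{n-1}$ to absorb, and your final constant $\frac1\pi$ is correct as stated.
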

\begin{remark}
The pointwise inequality \eqref{lower-bound-Ra-inequality} for the transform $\mathcal{R}_a$  is better and more natural than the inequality \eqref{pointwise-inequality-JDE} established in \cite{[Zhang-JDE2026]}. This can be seen in two ways. First, we note that
\begin{eqnarray*}
1-\frac{|x|^{n+1}}{(|x|^2+a^2)^{\frac{n+1}{2}}}
>\frac{1}{2^{n+1}}\Big[1-\frac{2^{n+1}|x|^{n+1}}{(4|x|^2+a^2)^{\frac{n+1}{2}}}\Big].
\end{eqnarray*}
Second, when $n=1$, the inequality \eqref{lower-bound-Ra-inequality} formally coincides with the one-dimensional inequality \eqref{pointwise-inequality-Ha} for the transform $\mathcal{H}_a$ obtained in \cite{[Liu-Zhang]}.
\end{remark}
\textbf{Proof of Proposition \ref{lower-bound-Ra}.} Following the standard computations in the proof of Lemma 4.4 in  \cite{[Zhang-JDE2026]}, we have that, for any $x\neq0$,
\begin{align}\label{velocity-expression}
&-\mathcal{R}_af(x)\cdot\frac{x}{|x|}
=\frac{\Gamma(\frac{n-1}{2})}{2\pi^{\frac{n+1}{2}}}\int\limits_0^\infty f'(\varrho)\varrho^{n-1}
\int\limits_{\mathbb{S}^{n-1}}\Big(\frac{z_1}{||x|e_1-\varrho z|^{n-1}}-\frac{z_1}{(||x|e_1-\varrho z|^2+a^2)^{\frac{n-1}{2}}}\Big)d\sigma(z)d\varrho\nonumber\\
&~~~~~~~~=\frac{1}{\pi}\frac{1}{|x|^{n}}\int\limits_0^\infty f'(\varrho)\varrho^n
\int\limits_0^\pi\Big[\frac{\sin^n\mu }{(1-2\frac{\varrho}{|x|}\cos\mu+\frac{\varrho^2}{|x|^2})^{\frac{n+1}{2}}}-\frac{\sin^n\mu }{(1-2\frac{\varrho}{|x|}\cos\mu+\frac{\varrho^2}{|x|^2}+\frac{a^2}{|x|^2})^{\frac{n+1}{2}}}\Big]d\mu d\varrho\nonumber\\
&~~~~~~~~=\frac{1}{\pi}\frac{1}{|x|^n}\int\limits_0^\infty f'(\varrho)\varrho^{n}
\Big[G\Big(\frac{\varrho}{|x|}\Big)-\widetilde{G}\Big(\frac{\varrho}{|x|},\frac{a}{|x|}\Big)\Big]d\varrho,
\end{align}
where the functions $G(\cdot)$ and $\widetilde{G}(\cdot,\mu)$ are defined as \eqref{G-definition} and \eqref{G-miu-definition}, respectively.
By Lemma \ref{Key-inequality}, we have that $b_{k}(\mu)>0$ for any integer $k\geq1$ and any $\mu>0$. As a consequence,
by \eqref{Taylor}, we can obtain that,
for any $\lambda\in(0,1)$ and all $\mu>0$,
\begin{align*}
G(\lambda)-\widetilde{G}(\lambda,\mu)
&=a_0b_0(\mu)+\sum^\infty_{k=1}a_{2k}b_k(\mu)\lambda^{2k}\\
&>a_0b_0(\mu)=B\Big(\frac12,\frac{n+1}{2}\Big)\Big[1-\frac{1}{(1+\mu^2)^{\frac{n+1}{2}}}\Big].
\end{align*}
Thus, we finally have that
\begin{align*}
-\mathcal{R}_af(x)\cdot\frac{x}{|x|}
&\geq\frac{1}{\pi}\frac{1}{|x|^n}\int\limits_0^{|x|} f'(\varrho)\varrho^{n}
\Big[G\Big(\frac{\varrho}{|x|}\Big)-\widetilde{G}\Big(\frac{\varrho}{|x|},\frac{a}{|x|}\Big)\Big]d\varrho\\
&\geq\frac{1}{\pi}\frac{1}{|x|^n}\int\limits_0^{|x|} f'(\varrho)\varrho^{n}
B\Big(\frac12,\frac{n+1}{2}\Big)\Big[1-\frac{1}{(1+(\frac{a}{|x|})^2)^{\frac{n+1}{2}}}\Big]d\varrho\\
&=\frac{B(\frac12,\frac{n+1}{2})}{\pi}\frac{1}{|x|^n}\Big[1-\frac{|x|^{n+1}}{(|x|^2+a^2)^{\frac{n+1}{2}}}\Big]\int\limits_0^{|x|} f'(\varrho)\varrho^{n}d\varrho\\
&=\frac{nB(\frac12,\frac{n+1}{2})}{\pi}\frac{1}{|x|^n}\Big[1-\frac{|x|^{n+1}}{(|x|^2+a^2)^{\frac{n+1}{2}}}\Big]\int\limits_0^{|x|} (f(|x|)-f(\varrho))\varrho^{n-1}
d\varrho,
\end{align*}
which is the desired lower bound. \hfill\hfill$\square$\vskip12pt
\begin{remark}
For a radial and non-increasing Schwartz function $f:\mathbb{R}^n\rightarrow\mathbb{R}$, we have the pointwise  inequality
\begin{eqnarray*}
\mathcal{R}_a f(x)\cdot\frac{x}{|x|}
\geq\frac{nB(\frac12,\frac{n+1}{2})}{\pi}\frac{1}{|x|^n}\Big[1-\frac{|x|^{n+1}}{(|x|^2+a^2)^{\frac{n+1}{2}}}\Big]
\int\limits_0^{|x|} (f(\varrho)-f(|x|))\varrho^{n-1}
d\varrho.
\end{eqnarray*}
\end{remark}
By using the improved pointwise inequality \eqref{lower-bound-Ra-inequality}, we derive the following nonlinear integral inequality for the transform $\mathcal{R}_a$, which improves the inequality \eqref{weighted-Ra-monotone} established in \cite{[Zhang-JDE2026]}.
\begin{proposition}\label{n-nonlinear-inequality-monotone}
Fix $n\geq2$ and $a>0$. Let $-1<\delta<1$ and $f:\mathbb{R}^n\rightarrow\mathbb{R}$ be a radial, nondecreasing and continuously differentiable function with $\nabla f\in L^1(\mathbb{R}^n)\cap L^\infty(\mathbb{R}^n)$.
Then
\begin{eqnarray*}
-\int\limits_{\mathbb{R}^n}\frac{\mathcal{R}_a f(x)\cdot\nabla f(x)}{|x|^{n+\delta}}dx
\geq C_{n,\delta}
\int\limits_{\mathbb{R}^n}\frac{(f(0)-f(x))^2}{|x|^{n+1+\delta}}\Big[1-\frac{|x|^{n+1}}{(|x|^2+a^2)^{\frac{n+1}{2}}}\Big]dx,
\end{eqnarray*}
where
\begin{align*}
C_{n,\delta}=\frac{(\sqrt{n+1+\delta}-\sqrt{n})^2}{2\pi}B\Big(\frac12,\frac{n+1}{2}\Big).
\end{align*}
\end{proposition}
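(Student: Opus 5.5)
The plan is to reduce everything to one-dimensional integrals in the radial variable. I then insert the pointwise bound of Proposition \ref{lower-bound-Ra} and close the argument with an integration by parts and a weighted Hardy inequality, in the spirit of the proof of \eqref{weighted-Ra-monotone} in \cite{[Zhang-JDE2026]}.

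\textbf{Setup.} Write $r=|x|$ and $h(r):=f(r)-f(0)\geq0$. Set
\begin{align*}
W(r):=1-\frac{r^{n+1}}{(r^2+a^2)^{\frac{n+1}{2}}},
\end{align*}
and note that $W$ is positive and nonincreasing, since $r/\sqrt{r^2+a^2}$ increases. Since $\nabla f(x)=f'(r)\frac{x}{|x|}$ with $f'\geq0$, polar coordinates and \eqref{lower-bound-Ra-inequality} give
\begin{align*}
-\int_{\mathbb{R}^n}\frac{\mathcal{R}_af\cdot\nabla f}{|x|^{n+\delta}}dx\geq \omega_{n-1}\frac{nB(\frac12,\frac{n+1}{2})}{\pi}\int_0^\infty f'(r)F(r)\,r^{-n-1-\delta}W(r)\,dr .
\end{align*}
Here
\begin{align*}
F(r):=\int_0^r(f(r)-f(s))s^{n-1}ds=\frac1n\int_0^r f'(s)s^n\,ds ,
\end{align*}
so that $F'(r)=f'(r)r^n/n$. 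Hence $f'F\,r^{-n}=\frac n2(F^2)'\,r^{-2n}$.

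\textbf{Integration by parts.} Integrating by parts and using $W'\leq0$, I get
\begin{align*}
\int_0^\infty (F^2)'\,r^{-2n-1-\delta}W\,dr\geq(2n+1+\delta)\int_0^\infty F^2 r^{-2n-2-\delta}W\,dr .
\end{align*}
The boundary terms must vanish, and I expect checking this to be the technical obstacle.
\begin{itemize}
\item Near $0$: $F=O(r^{n+1})$ because $f'$ is bounded, so $F^2r^{-2n-1-\delta}=O(r^{1-\delta})\to0$, using $\delta<1$.
\item Near $\infty$: $F\leq \|h\|_{L^\infty}r^n/n$ and $W=O(r^{-2})$, so the term is $O(r^{-3-\delta})\to0$.
\end{itemize}
Here $\nabla f\in L^1$ is used to guarantee that $f$ is bounded and all integrals are finite. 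A cleaner route is to perform the argument on $[\epsilon,R]$ and pass to the limit.

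\textbf{Return to $h$.} Let $H(r):=\int_0^r h(s)s^{n-1}ds$. Since $f(r)-f(s)=h(r)-h(s)$, we have $F=\frac{r^n}{n}h-H$, that is, $h=\frac{n}{r^n}(F+H)$. For $\varepsilon>0$, Young's inequality gives
\begin{align*}
\int_0^\infty\frac{h^2W}{r^{2+\delta}}dr\leq n^2(1+\varepsilon)\int_0^\infty\frac{F^2W}{r^{2n+2+\delta}}dr+n^2(1+\varepsilon^{-1})\int_0^\infty\frac{H^2W}{r^{2n+2+\delta}}dr .
\end{align*}
The last integral is controlled by a weighted Hardy inequality, proved by the same integration by parts (the monotonicity of $W$ again gives the right sign) followed by Cauchy--Schwarz:
\begin{align*}
\int_0^\infty H^2r^{-2n-2-\delta}W\,dr\leq\frac{4}{(2n+1+\delta)^2}\int_0^\infty h^2 r^{-2-\delta}W\,dr .
\end{align*}

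\textbf{Optimisation.} I absorb the Hardy term into the left-hand side, which requires $4n^2(1+\varepsilon^{-1})<(2n+1+\delta)^2$, and then optimise over $\varepsilon$. I expect this to produce exactly the factor $(\sqrt{n+1+\delta}-\sqrt n)^2$, together with the prefactor $nB(\frac12,\frac{n+1}{2})/\pi$ and the factor $\frac n2(2n+1+\delta)$ from the integration by parts. Finally I rewrite $\omega_{n-1}\int_0^\infty h^2r^{-2-\delta}W\,dr$ as the $\mathbb{R}^n$ integral on the right-hand side of the proposition.

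If the bookkeeping of constants does not land exactly on $C_{n,\delta}$, my fallback is a Li--Rodrigo style Hardy step with a one-parameter power weight tuned to produce the stated square-root constant. This is where I would spend care. The admissibility $-1<\delta<1$ enters only through the boundary terms and the positivity of $2n+1+\delta$.
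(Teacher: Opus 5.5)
Your argument is correct, but it takes a different route from the paper, and your bookkeeping does not land on $C_{n,\delta}$. It lands on a strictly larger constant.

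\textbf{What your steps actually give.} Put $A=\int_0^\infty F^2r^{-2n-2-\delta}W\,dr$, $Q=\int_0^\infty h^2r^{-2-\delta}W\,dr$ and $\gamma=\frac{2n}{2n+1+\delta}<1$. Your decomposition $h=\frac{n}{r^n}(F+H)$ and your weighted Hardy bound give $\sqrt{Q}\le n\sqrt{A}+\gamma\sqrt{Q}$. This follows from Minkowski, or equivalently from your Young step at the optimal $\varepsilon=\gamma/(1-\gamma)$. Hence
$$A\ge \frac{(1+\delta)^2}{n^2(2n+1+\delta)^2}\,Q.$$
Multiplying by the prefactors $\omega_{n-1}\frac{nB(\frac12,\frac{n+1}{2})}{\pi}$ and $\frac n2(2n+1+\delta)$ gives the inequality with constant
$$\frac{B(\frac12,\frac{n+1}{2})}{2\pi}\cdot\frac{(1+\delta)^2}{2n+1+\delta}.$$
Since $(\sqrt{n+1+\delta}-\sqrt n)^2=\frac{(1+\delta)^2}{2n+1+\delta+2\sqrt{n(n+1+\delta)}}$, your constant exceeds $C_{n,\delta}$, roughly by a factor of $2$. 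The proposition therefore follows a fortiori, and no fallback is needed. Your boundary-term checks and the sign arguments using $W'\le 0$ in both integrations by parts are fine.

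\textbf{How the paper does it.} The paper uses Fubini to rewrite the radial integral as
$$\frac12\int_0^\infty\varrho^{n-1}\int_\varrho^\infty r^{-n-1-\delta}W(r)\,\partial_r\big(f(r)-f(\varrho)\big)^2\,dr\,d\varrho .$$
It integrates by parts in $r$, dropping the same $W'$ term you drop. It then applies the splitting $(f(r)-f(\varrho))^2\ge(1-\alpha)h(r)^2+(1-\alpha^{-1})h(\varrho)^2$ together with $W(r)\le W(\varrho)$ for $r\ge\varrho$. Optimizing at $\alpha=\sqrt{n/(n+1+\delta)}$ produces the square-root constant. That splitting is where the paper loses.

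\textbf{Comparison.} Your route stays with the single function $F$ and compares $A$ with $Q$ through a Hardy inequality. For $W\equiv1$ this comparison is sharp: profiles $h\approx r^{(1+\delta)/2}$ make both the triangle inequality and the Hardy step asymptotically equalities. That is why you get the better constant. The paper's route is shorter and needs no separate Hardy inequality, at the price of the weaker constant stated in the proposition.
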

\textbf{Proof.}
By Proposition \ref{lower-bound-Ra} and the monotonicity of $f$, we have that
\begin{eqnarray*}
\begin{split}
&-\mathcal{R}_af(x)\cdot\nabla f(x)
=-\mathcal{R}_af(x)\cdot\frac{x}{|x|} f'(|x|)\\
&~~~~~~~~~~\geq\frac{nB(\frac12,\frac{n+1}{2})}{\pi}\frac{f'(|x|)}{|x|^n}\Big[1-\frac{|x|^{n+1}}{(|x|^2+a^2)^{\frac{n+1}{2}}}\Big]\int\limits_0^{|x|} (f(|x|)-f(\varrho))\varrho^{n-1}
d\varrho,
\end{split}
\end{eqnarray*}
which implies that
\begin{align*}
&-\int\limits_{\mathbb{R}^n}\frac{\mathcal{R}_a f(x)\cdot\nabla f(x)}{|x|^{n+\delta}}dx
\geq\frac{B(\frac12,\frac{n+1}{2})}{n^{-1}\pi}\int\limits_{\mathbb{R}^n}\frac{f'(|x|)}{|x|^{2n+\delta}}\Big[1-\frac{|x|^{n+1}}{(|x|^2+a^2)^{\frac{n+1}{2}}}\Big]\int\limits_0^{|x|} (f(|x|)-f(\varrho))\varrho^{n-1}
d\varrho dx\\
&~~~~~~~~~~~~~~~~~~~~~~~~=\frac{B(\frac12,\frac{n+1}{2})}{n^{-1}\pi}\omega_{n-1}\underbrace{\int\limits_0^\infty\frac{f'(r)}{r^{n+1+\delta}}\Big[1-\frac{r^{n+1}}{(r^2+a^2)^{\frac{n+1}{2}}}\Big]\int\limits_0^{r} (f(r)-f(\varrho))\varrho^{n-1}
d\varrho dr}_{I}.
\end{align*}
Furthermore, by Fubini's theorem and integration by parts, we derive that
\begin{align*}
I&=\frac12\int\limits_0^\infty\varrho^{n-1}\int\limits_\varrho^\infty\frac{1}{r^{n+1+\delta}}\Big[1-\frac{r^{n+1}}{(r^2+a^2)^{\frac{n+1}{2}}}\Big]\frac{\partial}{\partial r}\Big((f(r)-f(\varrho))^2\Big) drd\varrho\\
&=\frac{n+1+\delta}{2}\iint\limits_{r\geq\varrho>0}\frac{(f(r)-f(\varrho))^2}{r^{n+2+\delta}}\Big[1-\frac{r^{n+1}}{(r^2+a^2)^{\frac{n+1}{2}}}\Big]\varrho^{n-1}d\varrho dr\\
&\ \ \ \
+\frac{n+1}{2}\iint\limits_{r\geq\varrho>0}\frac{a^2(f(r)-f(\varrho))^2}{r^{1+\delta}(r^2+a^2)^{\frac{n+3}{2}}}\varrho^{n-1}d\varrho dr\\
&\geq\frac{n+1+\delta}{2}\underbrace{\iint\limits_{r\geq\varrho>0}\frac{(f(r)-f(\varrho))^2}{r^{n+2+\delta}}\Big[1-\frac{r^{n+1}}{(r^2+a^2)^{\frac{n+1}{2}}}\Big]\varrho^{n-1}d\varrho dr}_{I_1}.
\end{align*}
For any $0<\alpha<1$, we note that
\begin{align*}
(f(r)-f(\varrho))^2\geq(1-\alpha)(f(r)-f(\varrho))^2+\Big(1-\frac{1}{\alpha}\Big)(f(\varrho)-f(0))^2.
\end{align*}
Then the integral $I_1$ can be estimated as
\begin{align*}
I_1&\geq(1-\alpha)\iint\limits_{r\geq\varrho>0}\frac{(f(r)-f(0))^2}{r^{n+2+\delta}}\Big[1-\frac{r^{n+1}}{(r^2+a^2)^{\frac{n+1}{2}}}\Big]\varrho^{n-1}d\varrho dr\\
&\ \ \ \ \
+(1-\frac{1}{\alpha})\iint\limits_{r\geq\varrho>0}\frac{(f(\varrho)-f(0))^2}{r^{n+2+\delta}}\Big[1-\frac{r^{n+1}}{(r^2+a^2)^{\frac{n+1}{2}}}\Big]\varrho^{n-1}d\varrho dr\\
&\geq(1-\alpha)\iint\limits_{r\geq\varrho>0}\frac{(f(r)-f(0))^2}{r^{n+2+\delta}}\Big[1-\frac{r^{n+1}}{(r^2+a^2)^{\frac{n+1}{2}}}\Big]\varrho^{n-1}d\varrho dr\\
&\ \ \ \ \
+\Big(1-\frac{1}{\alpha}\Big)\iint\limits_{r\geq\varrho>0}\frac{(f(\varrho)-f(0))^2}{r^{n+2+\delta}}\Big[1-\frac{\varrho^{n+1}}{(\varrho^2+a^2)^{\frac{n+1}{2}}}\Big]\varrho^{n-1}d\varrho dr\\
&=\Big(\frac{1-\alpha}{n}+\frac{1-\frac{1}{\alpha}}{n+1+\delta}\Big)\int\limits^\infty_0\frac{(f(r)-f(0))^2}{r^{2+\delta}}\Big[1-\frac{r^{n+1}}{(r^2+a^2)^{\frac{n+1}{2}}}\Big]dr\\
&=\Big[\frac{2n+1+\delta}{n(n+1+\delta)}-\Big(\frac{\alpha}{n}+\frac{\frac{1}{\alpha}}{n+1+\delta}\Big)\Big]\int\limits^\infty_0\frac{(f(r)-f(0))^2}{r^{2+\delta}}\Big[1-\frac{r^{n+1}}{(r^2+a^2)^{\frac{n+1}{2}}}\Big]dr,
\end{align*}
where we used the monotonicity of the weight $1-\frac{r^{n+1}}{(r^2+a^2)^{\frac{n+1}{2}}}$ in $r$ and the fact that $1-\frac{1}{\alpha}<0$.
Finally, by choosing $\alpha=\sqrt{\frac{n}{n+1+\delta}}\in(0,1)$, we obtain that
\begin{align*}
I_1&\geq\Big[\frac{2n+1+\delta}{n(n+1+\delta)}-\frac{2}{\sqrt{n(n+1+\delta)}}\Big]\int\limits^\infty_0\frac{(f(r)-f(0))^2}{r^{2+\delta}}\Big[1-\frac{r^{n+1}}{(r^2+a^2)^{\frac{n+1}{2}}}\Big]dr\\
&=\frac{(\sqrt{n+1+\delta}-\sqrt{n})^2}{n(n+1+\delta)}\int\limits^\infty_0\frac{(f(r)-f(0))^2}{r^{2+\delta}}\Big[1-\frac{r^{n+1}}{(r^2+a^2)^{\frac{n+1}{2}}}\Big]dr.
\end{align*}
Altogether, we can obtain that
\begin{align*}
-\int\limits_{\mathbb{R}^n}\frac{\mathcal{R}_a f(x)\cdot\nabla f(x)}{|x|^{n+\delta}}dx
\geq\frac{(\sqrt{n+1+\delta}-\sqrt{n})^2}{2\pi (B(\frac12,\frac{n+1}{2}))^{-1}}\int\limits_{\mathbb{R}^n}\frac{(f(x)-f(0))^2}{|x|^{n+1+\delta}}\Big[1-\frac{|x|^{n+1}}{(|x|^2+a^2)^{\frac{n+1}{2}}}\Big]dx,
\end{align*}
which concludes the proof of Proposition \ref{n-nonlinear-inequality-monotone}.  \hfill\hfill$\square$\vskip12pt
\subsection{Weighted inequalities for radial and non-monotone functions}
We next prove two new nonlinear weighted inequalities for the transform $\mathcal{R}_a$ acting on radial Schwartz function that is not necessarily monotone. The analogous
one-dimensional  inequalities for the transform $\mathcal{H}_a$ were established in \cite{[Liu-Zhang]}.
Our first new nonlinear weighted inequality for the transform $\mathcal{R}_a$ can be stated as
\begin{proposition}\label{n-nonlinear-inequality}
Let $n\geq2$ and $f:\mathbb{R}^n\rightarrow\mathbb{R}$ be a radial Schwartz function. Then
\begin{align*}
-\int\limits_{\mathbb{R}^n}\frac{\mathcal{R}_a f(x)\cdot\nabla f(x)}{|x|^{n}}dx
\geq\frac{B(\frac12,\frac{n+1}{2})}{2\pi}
\int\limits_{\mathbb{R}^n}\frac{(f(0)-f(x))^2}{|x|^{n+1}}\Big[1-\frac{|x|^{n+1}(|x|^2-na^2)}{(|x|^2+a^2)^{\frac{n+3}{2}}}\Big]dx.
\end{align*}
\end{proposition}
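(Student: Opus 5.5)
The plan is to reduce the left-hand side to a one-dimensional quadratic form in the radial profile, in the spirit of the proof of \eqref{Riesz-transform-integral-inequality} in \cite{[Zhang-JAM27]} and its one-dimensional analogue in \cite{[Liu-Zhang]}.

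\textbf{Step 1: reduction to a double integral.} Write $f(x)=f(r)$ with $r=|x|$, so that $\nabla f(x)=f'(r)\frac{x}{|x|}$. Inserting \eqref{velocity-expression} and using polar coordinates, I expect
\begin{align*}
-\int\limits_{\mathbb{R}^n}\frac{\mathcal{R}_a f(x)\cdot\nabla f(x)}{|x|^{n}}dx
=\frac{\omega_{n-1}}{\pi}\int\limits_0^\infty\int\limits_0^\infty f'(r)f'(\varrho)\,k(r,\varrho)\,d\varrho\, dr,
\end{align*}
where
\begin{align*}
k(r,\varrho)=\frac{\varrho^n}{r^{n+1}}\Big[G\Big(\frac{\varrho}{r}\Big)-\widetilde{G}\Big(\frac{\varrho}{r},\frac{a}{r}\Big)\Big].
\end{align*}
By \eqref{transform-identity} and \eqref{transform-identity-G}, the kernel $\varrho^{-1}k(r,\varrho)$ is symmetric in $(r,\varrho)$. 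Hence I will work on the region $\{0<\varrho<r\}$, where $\lambda=\varrho/r\in(0,1)$ and the Maclaurin expansions of Corollaries \ref{Maclaurin-series} and \ref{1-mu-2-lambda-mu} are available.

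\textbf{Step 2: integration by parts into a positive quadratic form.} I use $f'(\varrho)=-\partial_\varrho(f(r)-f(\varrho))$ and integrate by parts in $\varrho$. The boundary term at $\varrho=0$ vanishes because $k=O(\varrho^n)$. Then I write
\begin{align*}
f'(r)(f(r)-f(\varrho))=\tfrac12\partial_r(f(r)-f(\varrho))^2
\end{align*}
and integrate by parts in $r$. This should produce
\begin{align*}
\frac12\iint (f(r)-f(\varrho))^2\,M(r,\varrho)\,d\varrho\, dr,\qquad M=-\partial_r\partial_\varrho k,
\end{align*}
together with a single-variable term of the form $\int (f(r)-f(0))^2(\cdots)\,dr$. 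The single-variable term comes from the $r$-derivative falling on the $a/r$-dependence and on the prefactor.

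The key computation is the $\lambda\to0$ part of the kernel. By \eqref{0-value}, it equals $a_0b_0(a/r)$, and its $r$-derivative involves $\partial_\mu\widetilde{G}(0,\mu)=-(n+1)B(\frac12,\frac{n+1}{2})\mu(1+\mu^2)^{-\frac{n+3}{2}}$. Combining $\frac{d}{dr}\big[1-r^{n+1}(r^2+a^2)^{-\frac{n+1}{2}}\big]$ with the factor $r^{-1}$, I expect exactly $B(\frac12,\frac{n+1}{2})\,w_a(r)/r^2$, up to the factor $\frac12$. After multiplying by $\omega_{n-1}/\pi$ and converting back to $\mathbb{R}^n$, this gives the right-hand side of the proposition with constant $\frac{B(\frac12,\frac{n+1}{2})}{2\pi}$. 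Concretely, I will isolate this piece by splitting
\begin{align*}
G-\widetilde{G}=a_0b_0(\mu)+\sum_{k\geq1}a_{2k}b_k(\mu)\lambda^{2k},
\end{align*}
and treating the constant term exactly through the identity $(f(r)-f(\varrho))^2\to(f(r)-f(0))^2$ as $\varrho\to0$.

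\textbf{Step 3: positivity of the remainder (main obstacle).} It remains to show that the remaining double-integral kernel, coming from $\sum_{k\geq1}a_{2k}b_k(a/r)\lambda^{2k}$, is nonnegative, so that it can be dropped. Differentiating in $\varrho$ and in $r$ (through both $\lambda=\varrho/r$ and $\mu=a/r$) produces two types of terms:
\begin{itemize}
\item terms $2k(2k-1)a_{2k}b_k\lambda^{2k-2}$, as in \eqref{2-derivative-Taylor}, which are positive since $b_k>0$ by Lemma \ref{Key-inequality};
\item terms involving $\mu\,b_k'(\mu)$, i.e. $\partial^2_{\lambda\mu}\widetilde{G}$, given by \eqref{2-lambda-mu} with coefficients $\widetilde{a}_{2k}\widetilde{b}_k$.
\end{itemize}
Using \eqref{widetilde-a-2k-relation}, I plan to group, for each $k$, the two contributions into a single coefficient of $\lambda^{2k-1}$ and verify its sign. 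I will try to get this sign from the Fourier representation in the proof of Lemma \ref{Key-inequality} applied with $N=n+2$, which bounds $|\widetilde{b}_k|$.

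Convergence of the series up to $\lambda\to1$, and the handling of the diagonal $\varrho=r$ where $G$ has the logarithmic singularity \eqref{singularity-order}, will be controlled by \eqref{asympotic} together with the Schwartz decay and smoothness of $f$. The point to check there is that the diagonal boundary terms cancel because $(f(r)-f(\varrho))^2=O((r-\varrho)^2)$. If termwise positivity fails for small $k$, my fallback is to keep the full kernel $M$ and prove $M\geq0$ directly from the integral definitions \eqref{G-definition}–\eqref{G-miu-definition}, by differentiating under the $\theta$-integral.
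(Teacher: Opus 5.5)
Your Steps 1--2 follow the paper's structure, but there are two gaps, and the second is the serious one.

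\textbf{The symmetry claim is false.} The kernel $k(r,\varrho)$ is correct. The two integrations by parts do leave a one-variable term plus $\frac{\omega_{n-1}}{2\pi}\iint (f(\varrho)-f(r))^2M\,d\varrho\,dr$ with $M=-\partial_r\partial_\varrho k$. However, \eqref{transform-identity} and \eqref{transform-identity-G} give $k(\varrho,r)=\frac1r\big[G(\frac{\varrho}{r})-\widetilde G(\frac{\varrho}{r},\frac ar)\big]$. So $r^nk(r,\varrho)$ is symmetric, not $\varrho^{-1}k$. The weight $|x|^{-n}$ breaks the symmetry, and you cannot confine the analysis to $\{0<\varrho<r\}$. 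This matters in two ways.

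First, the main term lives in the other triangle. It is the boundary value as $r\to0^+$ (so $r<\varrho$) of $\partial_\varrho k=\partial_\varrho\big[\frac1\varrho\big(G(\frac r\varrho)-\widetilde G(\frac r\varrho,\frac a\varrho)\big)\big]$, whose limit is $-B(\frac12,\frac{n+1}2)w_a(\varrho)/\varrho^2$ as in \eqref{boundary-0}. In $\{\varrho<r\}$ the constant Taylor term carries a factor $\varrho^n$ and gives no boundary contribution at $\varrho=0$. Your computation of $\frac{d}{ds}[\frac1s b_0(\frac as)]$ is the right one, but it belongs to the $r$-boundary in this other region, not to a limit $\varrho\to0$.

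Second, the two triangles produce different coefficient families (see \eqref{negative-1}, \eqref{negative-2}):
\begin{itemize}
\item For $\varrho<r$, the coefficient of $(\varrho/r)^{2k}$ in $\partial_r\partial_\varrho k$ is a negative multiple of $b_k(\mu)+(2k+1)\mu^2\widetilde b_k(\mu)$, with $\mu=a/r$.
\item For $r<\varrho$, the coefficient of $(r/\varrho)^{2k-1}$ is a negative multiple of $b_k(\mu)+(2k+n+1)\mu^2\widetilde b_k(\mu)$, with $\mu=a/\varrho$.
\end{itemize}

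\textbf{Step 3 has no mechanism for the sign of these combinations.} $\widetilde b_k$ changes sign; already $\widetilde b_1(\mu)<0$ for $\mu^2>3/(n+2)$. Also $\inf_{\mu>0}b_k(\mu)=0$. So a size bound on $|\widetilde b_k|$ from a Fourier representation, combined with $0<b_k<2$, cannot decide the sign. Lemma \ref{Key-inequality} with $N=n+2$ says nothing about $\widetilde b_k$ directly, because the numerator $\mathcal Q_{n,k}$ of $\widetilde b_k$ in \eqref{bkmu-2} has denominators $3+2l$ in place of $1+2l$.

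What is needed is an exact cancellation, given by the polynomial identities \eqref{identity-2k-1} and \eqref{identity-2k-n}:
\begin{align*}
(1+x)\mathcal P_{n,k}(x)-(2k+1)x\mathcal Q_{n,k}(x)&=\mathcal P_{n+2,k}(x),\\
(1+x)\mathcal P_{n,k}(x)-(2k+n+1)x\mathcal Q_{n,k}(x)&=\mathcal P_{n-2,k+1}(x).
\end{align*}
These turn the two combinations into
\begin{align*}
1-\frac{\mathcal P_{n+2,k}(\mu^2)}{(1+\mu^2)^{\frac{n+3}{2}+2k}}\quad\text{and}\quad 1-\frac{\mathcal P_{n-2,k+1}(\mu^2)}{(1+\mu^2)^{\frac{n+3}{2}+2k}}.
\end{align*}
Both are positive by Lemma \ref{Key-inequality}, with $(N,K)=(n+2,k)$ and $(N,K)=(n-2,k+1)$ respectively. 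The second case is where $n\geq2$ enters, and your one-triangle plan never reaches it. Your fallback of proving $M\geq0$ directly under the $\theta$-integral is not an argument as stated.
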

\textbf{Proof.} By \eqref{velocity-expression} and integration by parts, we have that
\begin{align}\label{nonlinear-term-representation}
-\mathcal{R}_af(x)\cdot\nabla f(x)
&=\frac{1}{\pi}\frac{f'(|x|)}{|x|^n}\int\limits_0^\infty f'(\varrho)\varrho^{n}
\Big[G\Big(\frac{\varrho}{|x|}\Big)-\widetilde{G}\Big(\frac{\varrho}{|x|},\frac{a}{|x|}\Big)\Big]d\varrho\nonumber\\
&=\frac{1}{\pi}\frac{f'(|x|)}{|x|^n}\int\limits_0^\infty \varrho^{n}\Big[G\Big(\frac{\varrho}{|x|}\Big)-\widetilde{G}\Big(\frac{\varrho}{|x|},\frac{a}{|x|}\Big)\Big]\frac{d}{d\varrho}(f(\varrho)-f(|x|))d\varrho\nonumber\\
&=-\frac{1}{\pi}\frac{f'(|x|)}{|x|^n}\int\limits_0^\infty \frac{\partial}{\partial\varrho}\Big[\varrho^{n}\Big(G\Big(\frac{\varrho}{|x|}\Big)-\widetilde{G}\Big(\frac{\varrho}{|x|},\frac{a}{|x|}\Big)\Big)\Big](f(\varrho)-f(|x|))d\varrho,
\end{align}
where in the last equality we used the boundary conditions
\begin{eqnarray*}
\lim_{\varrho\rightarrow0^+}
\varrho^{n}\Big[G\Big(\frac{\varrho}{|x|}\Big)-\widetilde{G}\Big(\frac{\varrho}{|x|},\frac{a}{|x|}\Big)\Big](f(\varrho)-f(|x|))=0
\end{eqnarray*}
and, by \eqref{transform-identity} and \eqref{transform-identity-G},
\begin{align*}
&\lim_{\varrho\rightarrow+\infty}\varrho^{n}\Big[G\Big(\frac{\varrho}{|x|}\Big)-\widetilde{G}\Big(\frac{\varrho}{|x|},\frac{a}{|x|}\Big)\Big](f(\varrho)-f(|x|))\\
&=|x|^{n+1}\lim_{\varrho\rightarrow+\infty}\Big[G\Big(\frac{|x|}{\varrho}\Big)-\widetilde{G}\Big(\frac{|x|}{\varrho},\frac{a}{\varrho}\Big)\Big]\frac{f(\varrho)-f(|x|)}{\varrho}=0.
\end{align*}
Here we also used the singularity order of the function $G(\cdot)$ at $1$ to show that the boundary term at the singularity $|x|$ vanishes, that is,
\begin{align*}
\lim_{\varrho\rightarrow|x|}\varrho^{n}G\Big(\frac{\varrho}{|x|}\Big)(f(\varrho)-f(|x|))
&=|x|^{n+1}\lim_{\varrho\rightarrow|x|}\Big(\frac{\varrho}{|x|}-1\Big)G\Big(\frac{\varrho}{|x|}\Big)\frac{f(\varrho)-f(|x|)}{\varrho-|x|}\\
&=-|x|^{n+1}f'(|x|)\lim_{\varrho\rightarrow|x|}\Big(\frac{\varrho}{|x|}-1\Big)\ln\Big|\frac{\varrho}{|x|}-1\Big|\\
&=0,
\end{align*}
where we used the first limit in \eqref{singularity-order}.

It then follows from Fubini's theorem and integration by parts again that
\begin{eqnarray}\label{integral-representation-1}
\begin{split}
-\int\limits_{\mathbb{R}^n}\frac{\mathcal{R}_af(x)\cdot\nabla f(x)}{|x|^{n}}dx
&=-\frac{\omega_{n-1}}{\pi}\int\limits_0^\infty \frac{f'(r)}{r^{n+1}}
\int\limits_0^\infty \frac{\partial}{\partial\varrho}\Big[\varrho^{n}\Big(G\Big(\frac{\varrho}{r}\Big)-\widetilde{G}\Big(\frac{\varrho}{r},\frac{a}{r}\Big)\Big)\Big](f(\varrho)-f(r))d\varrho dr\\
&=\frac{\omega_{n-1}}{2\pi}\int\limits_0^\infty\int\limits_0^\infty
 \frac{1}{r^{n+1}}\frac{\partial}{\partial\varrho}\Big[\varrho^{n}\Big(G\Big(\frac{\varrho}{r}\Big)-\widetilde{G}\Big(\frac{\varrho}{r},\frac{a}{r}\Big)\Big)\Big]\frac{\partial}{\partial r}((f(\varrho)-f(r))^2)drd\varrho\\
&=\frac{\omega_{n-1}}{2\pi}\int\limits_0^\infty\frac{1}{r^{n+1}}\frac{\partial}{\partial\varrho}\Big[\varrho^{n}\Big(G\Big(\frac{\varrho}{r}\Big)-\widetilde{G}\Big(\frac{\varrho}{r},\frac{a}{r}\Big)\Big)\Big]
(f(\varrho)-f(r))^2\Big)\Big|^{\infty}_{r\rightarrow0^+}d\varrho\\
&\
-\frac{\omega_{n-1}}{2\pi}\int\limits_0^\infty\int\limits_0^\infty
\frac{\partial}{\partial r}\Big[\frac{1}{r^{n+1}}\frac{\partial}{\partial\varrho}\Big(\varrho^{n}\Big(G\Big(\frac{\varrho}{r}\Big)-\widetilde{G}\Big(\frac{\varrho}{r},\frac{a}{r}\Big)\Big)\Big)\Big](f(\varrho)-f(r))^2d\varrho dr.
\end{split}
\end{eqnarray}
Here the singularity order of $G(\cdot)$ and $G'(\cdot)$ at $1$ also ensures that the boundary term at the singularity $\varrho$ is zero, that is,
\begin{align*}
\lim_{r\rightarrow\varrho}\frac{1}{r^{n+1}}\frac{\partial}{\partial\varrho}\Big[\varrho^{n}G\Big(\frac{\varrho}{r}\Big)\Big]
(f(\varrho)-f(r))^2
&=\frac{(f'(\varrho))^2}{\varrho^{n+1}}\lim_{r\rightarrow\varrho}(\varrho-r)^2\Big[n\varrho^{n-1}G\Big(\frac{\varrho}{r}\Big)+\frac{\varrho^n}{r}G'\Big(\frac{\varrho}{r}\Big)\Big]\\
&=\frac{(f'(\varrho))^2}{\varrho^2}\Big[n\lim_{r\rightarrow\varrho}(\varrho-r)^2G\Big(\frac{\varrho}{r}\Big)+\lim_{r\rightarrow\varrho}(\varrho-r)^2G'\Big(\frac{\varrho}{r}\Big)\Big]\\
&=0,
\end{align*}
where we used the limits in \eqref{singularity-order}.
We note that
\begin{eqnarray}\label{boundary-infinity}
\begin{split}
&\lim_{r\rightarrow\infty}\frac{1}{r^{n+1}}\frac{\partial}{\partial\varrho}\Big[\varrho^{n}\Big(G\Big(\frac{\varrho}{r}\Big)-\widetilde{G }\Big(\frac{\varrho}{r},\frac{a}{r}\Big)\Big)\Big]
(f(\varrho)-f(r))^2\\
&=n\varrho^{n-1}\lim_{r\rightarrow\infty}\frac{(f(\rho)-f(r))^2}{r^{n+1}}\Big[G\Big(\frac{\varrho}{r}\Big)-\widetilde{G}\Big(\frac{\varrho}{r},\frac{a}{r}\Big)\Big]\\
&\ \ \ \
+\varrho^n\lim_{r\rightarrow\infty}\frac{(f(\varrho)-f(r))^2}{r^{n+2}}
\Big[G'\Big(\frac{\varrho}{r}\Big)-\frac{\partial \widetilde{G}}{\partial\lambda}\Big(\frac{\varrho}{r},\frac{a}{r}\Big)\Big]\\
&=0,
\end{split}
\end{eqnarray}
and, by \eqref{transform-identity}, \eqref{transform-identity-G} and \eqref{0-value}, we also have that
\begin{align}\label{boundary-0}
&\lim_{r\rightarrow0^+}\frac{1}{r^{n+1}}\frac{\partial}{\partial\varrho}\Big[\varrho^{n}\Big(G\Big(\frac{\varrho}{r}\Big)-\widetilde{G}\Big(\frac{\varrho}{r},\frac{a}{r}\Big)\Big)\Big]
=\lim_{r\rightarrow0^+}\frac{\partial}{\partial\varrho}\Big[\frac{1}{\varrho}\Big(G\Big(\frac{r}{\varrho}\Big)-\widetilde{G}\Big(\frac{r}{\varrho},\frac{a}{\varrho}\Big)\Big)\Big]\nonumber\\
&=\lim_{r\rightarrow0^+}\Big[-\frac{1}{\varrho^2}\Big(G\Big(\frac{r}{\varrho}\Big)-\widetilde{G}\Big(\frac{r}{\varrho},\frac{a}{\varrho}\Big)\Big)
-\frac{r}{\varrho^3}\Big(G'\Big(\frac{r}{\varrho}\Big)
-\frac{\partial \widetilde{G}}{\partial\lambda}\Big(\frac{r}{\varrho},\frac{a}{\varrho}\Big)\Big)
+\frac{a}{\varrho^3}\frac{\partial \widetilde{G}}{\partial\mu}\Big(\frac{r}{\varrho},\frac{a}{\varrho}\Big)\Big]\nonumber\\
&=-\frac{1}{\varrho^2}\Big[G(0)-\widetilde{G}\Big(0,\frac{a}{\varrho}\Big)\Big]
+\frac{a}{\varrho^3}\frac{\partial \widetilde{G}}{\partial\mu}\Big(0,\frac{a}{\varrho}\Big)\nonumber\\
&=-\frac{1}{\varrho^2}\Big[B\Big(\frac12,\frac{n+1}{2}\Big)-\frac{B(\frac12,\frac{n+1}{2})}{(1+(\frac{a}{\varrho})^2)^{\frac{n+1}{2}}}\Big]
-\frac{a}{\varrho^3}\frac{(n+1)B(\frac12,\frac{n+1}{2})}{(1+(\frac{a}{\varrho})^2)^{\frac{n+3}{2}}}\frac{a}{\varrho}\nonumber\\
&=-B\Big(\frac{1}{2},\frac{n+1}{2}\Big)\frac{1}{\varrho^2}\Big[1-\frac{\varrho^{n+1}(\varrho^2-na^2)}{(\varrho^2+a^2)^{\frac{n+3}{2}}}\Big].
\end{align}
Thus,
\begin{align}\label{main-term-1}
&\frac{\omega_{n-1}}{2\pi}\int\limits_0^\infty\Big(\frac{1}{r^{n+1}}\frac{\partial}{\partial\varrho}\Big(\varrho^{n}\Big(G\Big(\frac{\varrho}{r}\Big)-\widetilde{G}\Big(\frac{\varrho}{r},\frac{a}{r}\Big)\Big)\Big)
(f(\varrho)-f(r))^2\Big)\Big|^{\infty}_{r\rightarrow0^+}d\varrho\nonumber\\
&=\frac{\omega_{n-1}}{2\pi} B\Big(\frac12,\frac{n+1}{2}\Big)\int\limits_0^\infty\frac{(f(\varrho)-f(0))^2}{\rho^2}\Big[1-\frac{\varrho^{n+1}(\varrho^2-na^2)}{(\varrho^2+a^2)^{\frac{n+3}{2}}}\Big] d\varrho\nonumber\\
&=\frac{B(\frac12,\frac{n+1}{2})}{2\pi}\int\limits_{\mathbb{R}^n}\frac{(f(0)-f(x))^2}{|x|^{n+1}}\Big[1-\frac{|x|^{n+1}(|x|^2-na^2)}{(|x|^2+a^2)^{\frac{n+3}{2}}}\Big]dx.
\end{align}
It is sufficient  to show that, for all $0<\varrho,r<\infty$ and $\varrho\neq r$,
\begin{eqnarray}\label{negative}
\frac{\partial}{\partial r}\Big[\frac{1}{r^{n+1}}\frac{\partial}{\partial\varrho}\Big(\varrho^{n}\Big(G\Big(\frac{\varrho}{r}\Big)-\widetilde{G}\Big(\frac{\varrho}{r},\frac{a}{r}\Big)\Big)\Big)\Big]
<0.
\end{eqnarray}
Indeed, direct calculations give that, for $0<\varrho<r$,
\begin{eqnarray}\label{1-varrho}
\frac{\partial}{\partial\varrho}\Big(\varrho^{n}\Big(G\Big(\frac{\varrho}{r}\Big)-\widetilde{G}\Big(\frac{\varrho}{r},\frac{a}{r}\Big)\Big)\Big)
=n\varrho^{n-1}\Big[G\Big(\frac{\varrho}{r}\Big)
-\widetilde{G}\Big(\frac{\varrho}{r},\frac{a}{r}\Big)\Big]
+\frac{\varrho^n}{r}\Big[G'\Big(\frac{\varrho}{r}\Big)
-\frac{\partial\widetilde{G}}{\partial\lambda}\Big(\frac{\varrho}{r},\frac{a}{r}\Big)\Big]
\end{eqnarray}
and then
\begin{align*}
&\frac{\partial}{\partial r}\Big[\frac{1}{r^{n+1}}\frac{\partial}{\partial\varrho}\Big(\varrho^{n}\Big(G\Big(\frac{\varrho}{r}\Big)-\widetilde{G}\Big(\frac{\varrho}{r},\frac{a}{r}\Big)\Big)\Big)\Big]\\
&=n\rho^{n-1}\frac{\partial}{\partial r}\Big[\frac{1}{r^{n+1}}\Big(G\Big(\frac{\rho}{r}\Big)-\widetilde{G}\Big(\frac{\varrho}{r},\frac{a}{r}\Big)\Big)\Big]
+\rho^{n}\frac{\partial}{\partial r}\Big[\frac{1}{r^{n+2}}\Big(G'\Big(\frac{\varrho}{r}\Big)-\frac{\partial \widetilde{G}}{\partial\lambda}\Big(\frac{\varrho}{r},\frac{a}{r}\Big)\Big)\Big]\\
&=-n(n+1)\frac{\rho^{n-1}}{r^{n+2}}\Big[G\Big(\frac{\varrho}{r}\Big)-\widetilde{G}\Big(\frac{\varrho}{r},\frac{a}{r}\Big)\Big]
-(2n+2)\frac{\varrho^{n}}{r^{n+3}}\Big[G'\Big(\frac{\varrho}{r}\Big)-\frac{\partial \widetilde{G}}{\partial\lambda}\Big(\frac{\varrho}{r},\frac{a}{r}\Big)\Big]\\
& \ \ \ \
-\frac{\varrho^{n+1}}{r^{n+4}}\Big[G''\Big(\frac{\varrho}{r}\Big)-\frac{\partial^2\widetilde{G}}{\partial^2\lambda}\Big(\frac{\varrho}{r},\frac{a}{r}\Big)\Big]
+na\frac{\varrho^{n-1}}{r^{n+3}}\frac{\partial \widetilde{G}}{\partial \mu}\Big(\frac{\varrho}{r},\frac{a}{r}\Big)
+\frac{a\varrho^n}{r^{n+4}}\frac{\partial^2\widetilde{G}}{\partial\lambda\partial \mu}\Big(\frac{\varrho}{r},\frac{a}{r}\Big),
\end{align*}
which together with \eqref{Taylor}, \eqref{1-derivative-Taylor}-\eqref{1-mu}, \eqref{2-lambda-mu} and \eqref{widetilde-a-2k-relation} implies that, for $0<\varrho<r$,
\begin{eqnarray}\label{negative-1}
\begin{split}
&\frac{\partial}{\partial r}\Big[\frac{1}{r^{n+1}}\frac{\partial}{\partial\varrho}\Big(\varrho^{n}\Big(G\Big(\frac{\varrho}{r}\Big)-\widetilde{G}\Big(\frac{\varrho}{r},\frac{a}{r}\Big)\Big)\Big)\Big]\\
&=-n(n+1)\frac{\rho^{n-1}}{r^{n+2}}\sum^\infty_{k=0}a_{2k}b_k\Big(\frac{a}{r}\Big)\Big(\frac{\varrho}{r}\Big)^{2k}
-(2n+2)\frac{\varrho^{n}}{r^{n+3}}\sum^\infty_{k=1}2ka_{2k}b_k\Big(\frac{a}{r}\Big)\Big(\frac{\varrho}{r}\Big)^{2k-1}\\
&\ \ \ \
-\frac{\varrho^{n+1}}{r^{n+4}}\sum^\infty_{k=1}2k(2k-1)a_{2k}b_k\Big(\frac{a}{r}\Big)\Big(\frac{\varrho}{r}\Big)^{2k-2}
-n(n+1)a\frac{\varrho^{n-1}}{r^{n+3}}\frac{a}{r}\sum^\infty_{k=0}\widetilde{a}_{2k}\widetilde{b}_k\Big(\frac{a}{r}\Big)\Big(\frac{\varrho}{r}\Big)^{2k}\\
&\ \ \ \
-(n+1)\frac{a\varrho^n}{r^{n+4}}\frac{a}{r}\sum^\infty_{k=1}2k\widetilde{a}_{2k}\widetilde{b}_k\Big(\frac{a}{r}\Big)\Big(\frac{\varrho}{r}\Big)^{2k-1}\\
&=-\frac{\rho^{n-1}}{r^{n+2}}\Big[\sum^\infty_{k=0}(2k+n)(2k+n+1)a_{2k}b_k\Big(\frac{a}{r}\Big)\Big(\frac{\varrho}{r}\Big)^{2k}
+(n+1)\Big(\frac{a}{r}\Big)^2\sum^\infty_{k=1}(2k+n)\widetilde{a}_{2k}\widetilde{b}_k\Big(\frac{a}{r}\Big)\Big(\frac{\varrho}{r}\Big)^{2k}\Big]\\
&=-\frac{\rho^{n-1}}{r^{n+2}}\Big[n(n+1)a_0b_0\Big(\frac{a}{r}\Big)
+\sum^\infty_{k=1}(2k+n)(2k+n+1)a_{2k}\Big[b_k\Big(\frac{a}{r}\Big)
+(2k+1)\Big(\frac{a}{r}\Big)^2\widetilde{b}_k\Big(\frac{a}{r}\Big)\Big]\Big(\frac{\varrho}{r}\Big)^{2k}\Big].
\end{split}
\end{eqnarray}
Furthermore, for $0<r<\varrho$, by \eqref{transform-identity} and \eqref{transform-identity-G}, we derive that
\begin{eqnarray}\label{2-varrho}
\begin{split}
&\frac{1}{r^{n+1}}\frac{\partial}{\partial\varrho}\Big[\varrho^{n}\Big(G\Big(\frac{\varrho}{r}\Big)-\widetilde{G}\Big(\frac{\varrho}{r},\frac{a}{r}\Big)\Big)\Big]
=\frac{\partial}{\partial\varrho}\Big[\frac{1}{\varrho}\Big(G\Big(\frac{r}{\varrho}\Big)-\widetilde{G}\Big(\frac{r}{\varrho},\frac{a}{\varrho}\Big)\Big)\Big]\\
&=-\frac{1}{\varrho^2}\Big[G\Big(\frac{r}{\varrho}\Big)-\widetilde{G}\Big(\frac{r}{\varrho},\frac{a}{\varrho}\Big)\Big]
-\frac{r}{\varrho^3}\Big[G'\Big(\frac{r}{\varrho}\Big)-\frac{\partial \widetilde{G}}{\partial \lambda}\Big(\frac{r}{\varrho},\frac{a}{\varrho}\Big)\Big]+\frac{a}{\varrho^3}\frac{\partial \widetilde{G}}{\partial \mu}\Big(\frac{r}{\varrho},\frac{a}{\varrho}\Big)
\end{split}
\end{eqnarray}
and, by \eqref{1-derivative-Taylor}, \eqref{2-derivative-Taylor} and \eqref{2-lambda-mu}, we further have that, for $0<r<\varrho$,
\begin{align}\label{negative-2}
&\frac{\partial}{\partial r}\Big[\frac{1}{r^{n+1}}\frac{\partial}{\partial\varrho}\Big(\varrho^{n}\Big(G\Big(\frac{\varrho}{r}\Big)-\widetilde{G}\Big(\frac{\varrho}{r},\frac{a}{r}\Big)\Big)\Big)\Big]\nonumber\\
&=-\frac{2}{\varrho^3}\Big[G'\Big(\frac{r}{\varrho}\Big)
-\frac{\partial \widetilde{G}}{\partial\lambda}\Big(\frac{r}{\varrho},\frac{a}{\varrho}\Big)\Big]
-\frac{r}{\varrho^4}\Big[G''\Big(\frac{r}{\varrho}\Big)-\frac{\partial^2 \widetilde{G}}{\partial\lambda^2}\Big(\frac{r}{\varrho},\frac{a}{\varrho}\Big)\Big]
+\frac{a}{\varrho^4}\frac{\partial^2 \widetilde{G}}{\partial\lambda\partial\mu}\Big(\frac{r}{\varrho},\frac{a}{\varrho}\Big)\nonumber\\
&=-\frac{2}{\varrho^3}\sum^\infty_{k=1}2ka_{2k}b_k\Big(\frac{a}{\varrho}\Big)\Big(\frac{r}{\varrho}\Big)^{2k-1}
-\frac{r}{\varrho^4}\sum^\infty_{k=1}2k(2k-1)a_{2k}b_k\Big(\frac{a}{\varrho}\Big)\Big(\frac{r}{\varrho}\Big)^{2k-2}\nonumber\\
&\ \ \ \ \ \ \
-(n+1)\frac{a}{\varrho^4}\frac{a}{\varrho}\sum^\infty_{k=1}2k\widetilde{a}_{2k}\widetilde{b}_k\Big(\frac{a}{\varrho}\Big)\Big(\frac{r}{\varrho}\Big)^{2k-1}\nonumber\\
&=-\frac{1}{\varrho^3}\sum^\infty_{k=1}2k(2k+1)a_{2k}b_k\Big(\frac{a}{\varrho}\Big)\Big(\frac{r}{\varrho}\Big)^{2k-1}
-(n+1)\frac{a}{\varrho^4}\frac{a}{\varrho}\sum^\infty_{k=1}2k\widetilde{a}_{2k}\widetilde{b}_k\Big(\frac{a}{\varrho}\Big)\Big(\frac{r}{\varrho}\Big)^{2k-1}\nonumber\\
&=-\frac{1}{\varrho^3}\sum^\infty_{k=1}2k
\Big[(2k+1)a_{2k}b_k\Big(\frac{a}{\varrho}\Big)
+(n+1)\widetilde{a}_{2k}\Big(\frac{a}{\varrho}\Big)^2\widetilde{b}_k\Big(\frac{a}{\varrho}\Big)\Big]\Big(\frac{r}{\varrho}\Big)^{2k-1}\nonumber\\
&=-\frac{1}{\varrho^3}\sum^\infty_{k=1}2k(2k+1)a_{2k}
\Big[b_k\Big(\frac{a}{\varrho}\Big)
+(2k+n+1)\Big(\frac{a}{\varrho}\Big)^2\widetilde{b}_k\Big(\frac{a}{\varrho}\Big)\Big]\Big(\frac{r}{\varrho}\Big)^{2k-1}.
\end{align}
To show \eqref{negative}, by \eqref{negative-1} and \eqref{negative-2}, it is sufficient to prove the following lemma.
\begin{lemma}\label{Key-inequality-positivity}
Let $n\geq2$  be an integer. Then, for any $k\geq1$ and  every $\mu>0$,
\begin{align}\label{positivity-1}
b_k(\mu)+(2k+1)\mu^2\widetilde{b}_k(\mu)>0
\end{align}
and
\begin{align}\label{positivity-2}
b_k(\mu)+(2k+n+1)\mu^2\widetilde{b}_k(\mu)>0
\end{align}
where $b_k(\mu)$ and $\widetilde{b}_k(\mu)$ are defined as \eqref{bkmu-1} and \eqref{bkmu-2}, respectively.
\end{lemma}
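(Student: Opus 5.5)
The plan is to rewrite both inequalities as monotonicity statements in $\mu$ and then use the Fourier representation from the proof of Lemma \ref{Key-inequality}.

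\emph{Step 1: identify $\widetilde{b}_k$ with a $\mu$-derivative of $b_k$.} By Corollary \ref{Maclaurin-series} and Remark \ref{remark-4.1} we have $\widetilde{G}(\lambda,\mu)=\sum_k a_{2k}(1-b_k(\mu))\lambda^{2k}$. Differentiating term by term in $\mu$ and comparing with \eqref{1-mu} gives $a_{2k}b_k'(\mu)=(n+1)\mu\,\widetilde{a}_{2k}\widetilde{b}_k(\mu)$. Combined with \eqref{widetilde-a-2k-relation}, this yields
\begin{align*}
\mu^2\widetilde{b}_k(\mu)=\frac{\mu\, b_k'(\mu)}{(2k+1)(2k+n+1)}.
\end{align*}
This can also be checked directly from \eqref{bkmu-1}--\eqref{bkmu-2}. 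Hence, with $c\in\{2k+n+1,\,2k+1\}$, both \eqref{positivity-1} and \eqref{positivity-2} take the form $b_k+\frac{\mu}{c}b_k'>0$, that is,
\begin{align*}
\frac{d}{d\mu}\big(\mu^{c}b_k(\mu)\big)>0\quad\text{for }\mu>0 .
\end{align*}

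\emph{Step 2: Fourier representation.} From \eqref{derivative-identity}, \eqref{representation-identity} and the end of the proof of Lemma \ref{Key-inequality} (with $N=n$, $K=k$),
\begin{align*}
b_k(\mu)=\int_{\mathbb{R}}\big(1-\cos(\mu\xi)\big)p(\xi)\,d\xi,\qquad p(\xi)=\frac{\xi^{2k}w_n(\xi)}{\int_{\mathbb{R}}\eta^{2k}w_n(\eta)\,d\eta},
\end{align*}
where $p$ is an even probability density with all moments finite. Differentiating under the integral gives
\begin{align*}
\frac{d}{d\mu}\big(\mu^c b_k\big)=\mu^{c-1}\int_{\mathbb{R}}\Big[c\big(1-\cos(\mu\xi)\big)+\mu\xi\sin(\mu\xi)\Big]p(\xi)\,d\xi .
\end{align*}
The integrand is not pointwise nonnegative; for example, $t\sin t<0$ near $t=3\pi/2$. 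Therefore the second term has to be moved onto the density.

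\emph{Step 3: integrate by parts in $\xi$.} Since $\mu\xi\sin(\mu\xi)=\xi\,\partial_\xi(1-\cos(\mu\xi))$, integrating by parts gives
\begin{align*}
\int_{\mathbb{R}}\mu\xi\sin(\mu\xi)p\,d\xi=-\int_{\mathbb{R}}\big(1-\cos(\mu\xi)\big)(\xi p)'\,d\xi .
\end{align*}
The boundary terms vanish because $w_n$ decays exponentially; the only point to check is the behaviour of $w_n$ near $0$, where $w_n$ is at most logarithmically singular for $n\geq2$ and the factor $\xi^{2k+1}$ kills it. This yields
\begin{align*}
\frac{d}{d\mu}\big(\mu^c b_k\big)=\mu^{c-1}\int_{\mathbb{R}}\big(1-\cos(\mu\xi)\big)\big[(c-1)p(\xi)-\xi p'(\xi)\big]\,d\xi .
\end{align*}
For $\xi>0$ we have $\xi p'/p=2k+\xi w_n'(\xi)/w_n(\xi)$. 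From the explicit formula, $w_n'(\xi)=-\frac{\xi}{2\tau}(\cdots)$ inside the $\tau$-integral, so $w_n$ is strictly decreasing on $(0,\infty)$ and $\xi w_n'<0$. Hence, for $\xi>0$,
\begin{align*}
(c-1)p-\xi p'>(c-1-2k)p\geq0 \quad\text{whenever } c\geq 2k+1,
\end{align*}
and the same holds for $\xi<0$ by evenness. Since $1-\cos(\mu\xi)>0$ off a null set, the derivative is strictly positive for both admissible values of $c$. Together with $\mu^c b_k(\mu)\to0$ as $\mu\to0^+$, this gives $\mu^c b_k>0$, and in fact it gives the derivative positivity directly, which is what Step 1 requires.

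The main obstacle I expect is Step 1: verifying the identity $b_k'=(2k+1)(2k+n+1)\mu\widetilde{b}_k$ cleanly, including the exponents $\frac{n+1}{2}+2k$ versus $\frac{n+3}{2}+2k$ and the sign conventions. A secondary point is justifying differentiation under the integral and the vanishing of boundary terms near $\xi=0$ in Step 3.
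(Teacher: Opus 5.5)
Your proof is correct, and it takes a different route from the paper's.

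The paper proves two polynomial identities, \eqref{identity-2k-1} and \eqref{identity-2k-n}. These give the closed forms $b_k+(2k+1)\mu^2\widetilde{b}_k=1-\mathcal{P}_{n+2,k}(\mu^2)(1+\mu^2)^{-\frac{n+3}{2}-2k}$ and $b_k+(2k+n+1)\mu^2\widetilde{b}_k=1-\mathcal{P}_{n-2,k+1}(\mu^2)(1+\mu^2)^{-\frac{n-1}{2}-2k-2}$. The paper then applies Lemma \ref{Key-inequality} with $(N,K)=(n+2,k)$ and $(n-2,k+1)$.

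You replace both identities by the single relation $b_k'=(2k+1)(2k+n+1)\mu\widetilde{b}_k$. This relation is true: coefficientwise it amounts to $(n+1+4k)\mathcal{P}_{n,k}(x)-2(1+x)\mathcal{P}_{n,k}'(x)=(2k+1)(2k+n+1)\mathcal{Q}_{n,k}(x)$. You then work on the Fourier side with the single weight $w_n$, using integration by parts in $\xi$ and the strict decrease of $w_n$ on $(0,\infty)$.

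Underneath, the two mechanisms coincide. From the $\tau$-integral one gets $nw_n-\xi w_n'=(n+1)w_{n+2}$ and $-\xi w_n'=\frac{\xi^2}{n-1}w_{n-2}$. So your densities $(c-1)p-\xi p'$ for $c=2k+n+1$ and $c=2k+1$ are constant multiples of $\xi^{2k}w_{n+2}$ and $\xi^{2k+2}w_{n-2}$. These are exactly the densities representing $\mathcal{P}_{n+2,k}$ and $\mathcal{P}_{n-2,k+1}$.

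Each version has its advantages:
\begin{itemize}
\item Your version avoids the coefficient bookkeeping and treats both inequalities uniformly. It also makes visible that \eqref{positivity-2} (strict monotonicity of $\mu^{2k+1}b_k$), together with $b_k>0$, already implies \eqref{positivity-1}. Indeed it gives $b_k+\frac{\mu}{c}b_k'>0$ for every real $c\geq 2k+1$.
\item The paper's version is purely algebraic once Lemma \ref{Key-inequality} is available and yields exact closed forms. In addition, the identity \eqref{identity-2k-1} is reused later to get the decay of $\mu^2\widetilde{b}_k$ via Lemma \ref{Decay-rate}.
\end{itemize}

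One small correction: for $n\geq 2$, $w_n$ is bounded at the origin, with $w_n(0)=\Gamma(\frac{n}{2})/(2\sqrt{\pi}\,\Gamma(\frac{n+1}{2}))$. The logarithmic singularity occurs only for $w_0$, so your boundary-term and differentiation-under-the-integral checks are easier than you anticipated.
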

By Lemma \ref{Key-inequality-positivity}, combining \eqref{negative-1} and \eqref{negative-2} gives \eqref{negative}, which implies that
\begin{eqnarray*}
\int\limits_0^\infty\int\limits_0^\infty
\frac{\partial}{\partial r}\Big[\frac{1}{r^{n+1}}\frac{\partial}{\partial\varrho}\Big(\varrho^{n}\Big(G\Big(\frac{\varrho}{r}\Big)-\widetilde{G}\Big(\frac{\varrho}{r},\frac{a}{r}\Big)\Big)\Big)\Big](f(\varrho)-f(r))^2d\varrho dr\leq0.
\end{eqnarray*}
This negative integral along with
\eqref{integral-representation-1} and \eqref{main-term-1} completes the proof of Proposition \ref{n-nonlinear-inequality}.
\hfill\hfill$\square$\vskip12pt
Now we give the proof of Lemma \ref{Key-inequality-positivity}.

\textbf{Proof of Lemma \ref{Key-inequality-positivity}.} We first prove the inequality \eqref{positivity-1}.
Before that, we will prove the identity
\begin{align}\label{identity-2k-1}
(1+x)\mathcal{P}_{n,k}(x)-(2k+1)x\mathcal{Q}_{n,k}(x)=\mathcal{P}_{n+2,k}(x),
\end{align}
where the polynomial $\mathcal{P}_{N,K}(x)$ is defined as \eqref{polynomial-definition}, and $\mathcal{Q}_{n,k}(x)$ is defined by
\begin{align*}
\mathcal{Q}_{n,k}(x):=1+\displaystyle\sum^{k}_{j=1}(-1)^j\binom{k}{j}\Big(\displaystyle\prod^{j-1}_{l=0}\frac{n+2(k-l)}{3+2l}\Big)x^{j}.
\end{align*}
Indeed, for $1\leq j\leq k$, we denote
\begin{align*}
q_j:=\displaystyle\prod^{j-1}_{l=0}\frac{n+2(k-l)}{1+2l}.
\end{align*}
Clearly,
\begin{align}\label{relation}
q_j=\frac{n+2k-2j+2}{2j-1}q_{j-1},~\displaystyle\prod^{j-1}_{l=0}\frac{1+2l}{3+2l}=\frac{1}{2j+1}.
\end{align}
It follows that
\begin{align*}
&(1+x)\mathcal{P}_{n,k}(x)-(2k+1)x\mathcal{Q}_{n,k}(x)\\
&=(1+x)\Big[1+\displaystyle\sum^{k}_{j=1}(-1)^j\binom{k}{j}q_jx^{j}\Big]-(2k+1)x\Big[1+\displaystyle\sum^{k}_{j=1}(-1)^j\frac{q_j}{2j+1}\binom{k}{j}x^{j}\Big]\\
&=1-2kx+\displaystyle\sum^{k}_{j=1}(-1)^j\binom{k}{j}q_jx^{j}-\displaystyle\sum^{k}_{j=1}(-1)^j\frac{2k-2j}{2j+1}\binom{k}{j}q_jx^{j+1}\\
&=1-k(n+2+2k)x+\displaystyle\sum^{k}_{j=2}(-1)^j\Big[\binom{k}{j}q_j+\frac{2k-2j+2}{2j-1}\binom{k}{j-1}q_{j-1}\Big]x^{j}\\
&=1-k(n+2+2k)x+\displaystyle\sum^{k}_{j=2}(-1)^j\binom{k}{j}q_j\frac{n+2k+2}{n+2k-2j+2}x^{j}\\
&=1+\displaystyle\sum^{k}_{j=1}(-1)^j\binom{k}{j}\Big(\displaystyle\prod^{j-1}_{l=0}\frac{(n+2)+2(k-l)}{1+2l}\Big)x^{j}\\
&=\mathcal{P}_{n+2,k}(x),
\end{align*}
which is exactly \eqref{identity-2k-1}.
Then, by virtue of \eqref{bkmu-1}, \eqref{bkmu-2} and \eqref{identity-2k-1}, we have that, for any $k\geq1$,
\begin{align*}
b_k(\mu)+(2k+1)\mu^2\widetilde{b}_k(\mu)
&=1-\frac{\mathcal{P}_{n,k}(\mu^2)}{(1+\mu^2)^{\frac{n+1}{2}+2k}}+(2k+1)\mu^2\cdot\frac{\mathcal{Q}_{n,k}(\mu^2)}{(1+\mu^2)^{\frac{n+3}{2}+2k}}\\
&=1-\frac{(1+\mu^2)\mathcal{P}_{n,k}(\mu^2)-(2k+1)\mu^2\mathcal{Q}_{n,k}(\mu^2)}{(1+\mu^2)^{\frac{n+3}{2}+2k}}\\
&=1-\frac{\mathcal{P}_{n+2,k}(\mu^2)}{(1+\mu^2)^{\frac{n+3}{2}+2k}}>0,
\end{align*}
where the last inequality follows from the special case of $N=n+2$ and $K=k$ in Lemma \ref{Key-inequality}.

Next we show the inequality \eqref{positivity-2}. Similarly, we can derive that
\begin{align}\label{identity-2k-n}
(1+x)\mathcal{P}_{n,k}(x)-(2k+n+1)x\mathcal{Q}_{n,k}(x).
=\mathcal{P}_{n-2,k+1}(x).
\end{align}
Indeed, again by \eqref{relation}, we have that
\begin{align*}
&(1+x)\mathcal{P}_{n,k}(x)-(2k+n+1)x\mathcal{Q}_{n,k}(x)\\
&=(1+x)\Big[1+\displaystyle\sum^{k}_{j=1}(-1)^j\binom{k}{j}q_jx^{j}\Big]-(2k+n+1)x\Big[1+\displaystyle\sum^{k}_{j=1}(-1)^j\binom{k}{j}\frac{q_j}{2j+1}x^{j}\Big]\\
&=1-(2k+n)x+\displaystyle\sum^{k}_{j=1}(-1)^j\binom{k}{j}q_jx^{j}-\displaystyle\sum^{k}_{j=1}(-1)^j\frac{2k-2j+n}{2j+1}\binom{k}{j}q_jx^{j+1}\\
&=1-(k+1)q_1x+\displaystyle\sum^{k}_{j=2}(-1)^j\Big[\binom{k}{j}q_j+\frac{2k-2j+2+n}{2j-1}\binom{k}{j-1}q_{j-1}\Big]x^{j}+\frac{(-1)^{k+1}nq_k}{2k+1}x^{k+1}\\
&=1-(k+1)q_1x+\displaystyle\sum^{k}_{j=2}(-1)^j\binom{k}{j}q_j\frac{k+1}{k-j+1}x^{j}+(-1)^{k+1}q_{k+1}x^{k+1}\\
&=1+\displaystyle\sum^{k+1}_{j=1}(-1)^j\binom{k+1}{j}q_jx^{j}\\
&=1+\displaystyle\sum^{k+1}_{j=1}(-1)^j\binom{k+1}{j}\Big(\displaystyle\prod^{j-1}_{l=0}\frac{(n-2)+2((k+1)-l)}{1+2l}\Big)x^{j}\\
&=\mathcal{P}_{n-2,k+1}(x),
\end{align*}
which is the desired identity.
Then, by virtue of \eqref{bkmu-1}, \eqref{bkmu-2} and \eqref{identity-2k-n}, we have that, for any $k\geq1$,
\begin{align*}
b_k(\mu)+(2k+n+1)\mu^2\widetilde{b}_k(\mu)
&=1-\frac{\mathcal{P}_{n,k}(\mu^2)}{(1+\mu^2)^{\frac{n+1}{2}+2k}}+(2k+n+1)\mu^2\cdot\frac{\mathcal{Q}_{n,k}(\mu^2)}{(1+\mu^2)^{\frac{n+3}{2}+2k}}\\
&=1-\frac{(1+\mu^2)\mathcal{P}_{n,k}(\mu^2)-(2k+n+1)\mu^2\mathcal{Q}_{n,k}(\mu^2)}{(1+\mu^2)^{\frac{n+3}{2}+2k}}\\
&=1-\frac{\mathcal{P}_{n-2,k+1}(\mu^2)}{(1+\mu^2)^{\frac{(n-2)+1}{2}+2(k+1)}}>0,
\end{align*}
where the last inequality follows from the special case of $N=n-2>-1$ and $K=k+1$ in Lemma \ref{Key-inequality}. \hfill\hfill$\square$\vskip12pt

Since the nonlinear inequality in Proposition \ref{n-nonlinear-inequality} is not directly useful for implying blow-ups of \eqref{M-KS} for general radial initial data, we establish the next weighed inequality to show the finite-time blow-up.
\begin{proposition}\label{nonlinear-weighted-inequality-exponential}
Let $n\geq2$ and $a>0$.
Let $f:\mathbb{R}^n\rightarrow\mathbb{R}$ be a radial Schwartz function. Then
\begin{eqnarray*}
\begin{split}
-\int\limits_{\mathbb{R}^n}\frac{\mathcal{R}_a f(x)\cdot\nabla f(x)}{|x|^ne^{|x|}}dx
&\geq\frac{B(\frac12,\frac{n+1}{2})}{4\pi}
\int\limits_{\mathbb{R}^n}\frac{(f(0)-f(x))^2}{|x|^{n+1}}\Big[1-\frac{|x|^{n+1}(|x|^2-na^2)}{(|x|^2+a^2)^{\frac{n+3}{2}}}\Big]dx
-C_{n,a}\|f\|^2_{L^\infty},
\end{split}
\end{eqnarray*}
where the constant $C_{n,a}$ depends only on $n$ and $a$.
\end{proposition}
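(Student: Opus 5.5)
We follow the proof of Proposition \ref{n-nonlinear-inequality}, now carrying the extra factor $e^{-r}$ in the outer radial integral. Write $K(\varrho,r):=\frac{1}{r^{n+1}}\frac{\partial}{\partial\varrho}\big[\varrho^n\big(G(\frac{\varrho}{r})-\widetilde G(\frac{\varrho}{r},\frac ar)\big)\big]$.

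\textbf{Step 1: integral representation.} By \eqref{nonlinear-term-representation}, Fubini and one integration by parts in $r$, exactly as in \eqref{integral-representation-1}, we get
\begin{align*}
-\int\limits_{\mathbb{R}^n}\frac{\mathcal{R}_a f\cdot\nabla f}{|x|^ne^{|x|}}dx
=\frac{\omega_{n-1}}{2\pi}\int\limits_0^\infty\Big(e^{-r}K(\varrho,r)(f(\varrho)-f(r))^2\Big)\Big|^{\infty}_{r\rightarrow0^+}d\varrho
-\frac{\omega_{n-1}}{2\pi}\iint\frac{\partial}{\partial r}\big[e^{-r}K(\varrho,r)\big](f(\varrho)-f(r))^2d\varrho dr .
\end{align*}
The boundary terms at $r=\varrho$ and $r=\infty$ vanish for the same reasons as before, and the factor $e^{-r}$ only helps. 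The term at $r=0^+$ equals the full main term \eqref{main-term-1}, since $e^{0}=1$. Expanding the derivative,
\[
-\partial_r\big[e^{-r}K\big]=-e^{-r}\partial_rK+e^{-r}K .
\]
The first piece is nonnegative by \eqref{negative}, which in turn rests on Lemma \ref{Key-inequality-positivity}. So we keep the main term with constant $\frac{B}{2\pi}$, drop the first piece, and are left with the error term
\[
E:=\frac{\omega_{n-1}}{2\pi}\iint e^{-r}K(\varrho,r)(f(\varrho)-f(r))^2d\varrho dr .
\]
This term has no sign, since $K$ changes sign.

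\textbf{Step 2: bounding the error term.} We split $E$ into the regions $r<\varrho$ and $r>\varrho$. For $r<\varrho$, formula \eqref{2-varrho} together with the Taylor series \eqref{Taylor}, \eqref{1-derivative-Taylor} and \eqref{1-mu} gives
\[
K=-\frac{1}{\varrho^2}\sum_{k\geq0}(2k+1)a_{2k}\big[b_k+(2k+n+1)\mu^2\widetilde b_k\big]\lambda^{2k},\qquad \lambda=\frac r\varrho,\ \mu=\frac a\varrho .
\]
Since all coefficients are positive, $K<0$ on this region, and this piece of $E$ is $\leq0$. Hence only $r>\varrho$ can hurt.

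For $r>\varrho$, \eqref{1-varrho} and the Taylor series show that $K>0$. Moreover, \eqref{asympotic} gives $ka_{2k}\to1$, so
\[
K\lesssim \frac{\varrho^{n-1}}{r^{n+1}}\Big(1+\frac{1}{1-\varrho/r}\Big),
\]
with a logarithmic-type singularity at the diagonal $\varrho=r$, matching \eqref{singularity-order}.

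We then write $(f(\varrho)-f(r))^2\leq2(f(\varrho)-f(0))^2+2(f(r)-f(0))^2$. The region away from the diagonal, say $\varrho<r/2$, produces terms of the form
\[
\int\frac{(f(r)-f(0))^2}{r}e^{-r}dr,\qquad \int\frac{(f(\varrho)-f(0))^2}{\varrho^2}e^{-\varrho/2}\,\varrho\, d\varrho .
\]
Both are handled by Lemma \ref{Young-inequality-Weight} with small $\varepsilon$, giving at most $\frac{B}{4\pi}\times$ the main term plus $C_{n,a}\|f\|_{L^\infty}^2$. This is where the constant $\frac{B}{2\pi}$ drops to $\frac{B}{4\pi}$.

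\textbf{Step 3: the near-diagonal region (main obstacle).} The hardest step is the region $\varrho<r<2\varrho$. There the kernel behaves like $\frac{1}{r(r-\varrho)}$ while $(f(\varrho)-f(r))^2$ vanishes only quadratically, so the integral converges. However, a bound purely in $\|f\|_{L^\infty}$ is not immediate, because the integral $\int\frac{dr}{r-\varrho}$ diverges logarithmically.

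I would first try to avoid estimating this region directly. The idea is to rearrange so that $e^{-r}$ is differentiated against a better-behaved primitive. Concretely, I would integrate by parts in $r$ on $K$ itself: write $K=\partial_r H$ with $H$ built from $\frac{1}{r^{n+1}}\varrho^n\big(G-\widetilde G\big)$ and its $\varrho$-derivative. Then $e^{-r}(f(\varrho)-f(r))^2$ is differentiated instead, which produces only the mild $\ln$ singularity of $G$ (integrable, by \eqref{singularity-order}) and terms of size $\|f\|_{L^\infty}\,|f'|$.

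Alternatively, as in \cite{[Liu-Zhang]}, one can compare with $e^{-\varrho}$. We write $e^{-r}=e^{-\varrho}+(e^{-r}-e^{-\varrho})$ and use that $|e^{-r}-e^{-\varrho}|\leq|r-\varrho|e^{-\varrho}$, which cancels the $\frac{1}{r-\varrho}$ singularity. The remaining $e^{-\varrho}$ part is symmetrized with the $r<\varrho$ region, where $K$ is negative and of comparable size. The residual then has integrable kernel and is bounded by $\int(f(\varrho)-f(0))^2\varrho^{-1}e^{-\varrho/2}d\varrho$ plus $\|f\|^2_{L^\infty}$, which is again absorbed by Lemma \ref{Young-inequality-Weight}.
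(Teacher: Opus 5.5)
Your Step 1 is correct and matches the paper's \eqref{representation-2}. Step 2, however, contains a sign error that sends the rest of the argument after the wrong term.

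Since Step 1 gives left-hand side $\geq$ (main term) $+E$, you need a \emph{lower} bound for $E$. On $\{\varrho<r\}$ you correctly find $K>0$, so that part of $E$ is nonnegative and could simply be discarded. On $\{r<\varrho\}$ you correctly find $K<0$, and this is exactly the part that hurts, not the harmless one.

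It also cannot be estimated on its own. With $\lambda=r/\varrho$ and $\mu=a/\varrho$, all three terms in \eqref{2-varrho} are negative, so $-K\geq \varrho^{-2}\lambda(G'(\lambda)-\partial_\lambda\widetilde G(\lambda,\mu))$. By \eqref{singularity-order} this behaves like $\frac{1}{\varrho(\varrho-r)}$ as $r\uparrow\varrho$. Take $f(x)=\phi(|x|)\sin(N|x|)$ with $\phi\in C_c^\infty((1,2))$. Then $\iint_{r<\varrho}e^{-r}|K|(f(\varrho)-f(r))^2\,d\varrho\,dr$ grows like $\log N$, while $\|f\|_{L^\infty}$ and the main term stay bounded.

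So your estimates of the region $\varrho<r/2$ in Step 2 and of $\varrho<r<2\varrho$ in Step 3 address a term that needs no estimate. The term you dismissed as $\leq 0$ is the real obstacle. Your first Step 3 idea also fails independently: moving $\partial_r$ back onto $e^{-r}(f(\varrho)-f(r))^2$ produces terms containing $f'$, and nothing on the right-hand side of the proposition can absorb them.

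The missing idea is that the negative diagonal singularity on $\{r<\varrho\}$ must be cancelled by the positive one on $\{\varrho<r\}$. The paper does this in three moves.
\begin{enumerate}
\item Discard the nonnegative $n(G-\widetilde G)$ piece on $\{\varrho<r\}$.
\item Swap $r\leftrightarrow\varrho$ in the $-\frac{r}{\varrho}(G'-\partial_\lambda\widetilde G)$ piece on $\{r<\varrho\}$. Both $G'$-pieces then live on $\{\varrho<r\}$ with combined weight $e^{-\varrho}-e^{-r}(\varrho/r)^{n-1}\geq 0$.
\item By the mean value theorem this weight is at most $e^{-\varrho}[(r-\varrho)+(n-1)(\frac{r}{\varrho}-1)]$, which kills the $(1-\lambda)^{-1}$ singularity. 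This is the term $\mathcal{J}$.
\end{enumerate}
The remaining negative pieces on $\{r<\varrho\}$ are then bounded separately, using \eqref{uniformly-bounded} and \eqref{integral-estimate}, and absorbed via both inequalities of Lemma \ref{Young-inequality-Weight}. They are:
\begin{itemize}
\item the merely log-singular $G-\widetilde G$ part $\mathcal{K}$, including its far-from-diagonal region $r<\varrho/2$, which your Step 2 explicitly told you not to estimate;
\item the $\partial_\mu\widetilde G$ part $\mathcal{L}$, which needs a uniform bound (the paper uses Lemma \ref{Decay-rate}).
\end{itemize}

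Your Step 3 ``alternative'', writing $e^{-r}=e^{-\varrho}+(e^{-r}-e^{-\varrho})$ and symmetrizing, is essentially this pairing. It would work if you bound the combined kernel in absolute value, so the proposal is repairable. As written, though, Step 2 has you drop a negative term from a lower bound, and that step is invalid.
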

\textbf{Proof.}  We only need to modify the proof of Proposition \ref{n-nonlinear-inequality}.
Similar to \eqref{integral-representation-1}, by \eqref{nonlinear-term-representation}, \eqref{boundary-infinity}, \eqref{boundary-0} and \eqref{negative},
we can derive that
\begin{align}\label{representation-2}
&-\int\limits_{\mathbb{R}^n}\frac{\mathcal{R}_af(x)\cdot\nabla f(x)}{|x|^{n}}e^{-|x|}dx\nonumber\\
&=-\frac{\omega_{n-1}}{\pi}\int\limits_0^\infty \frac{f'(r)e^{-r}}{r^{n+1}}
\int\limits_0^\infty \frac{\partial}{\partial\rho}\Big[\varrho^{n}\Big(G\Big(\frac{\varrho}{r}\Big)-\widetilde{G}\Big(\frac{\varrho}{r},\frac{a}{r}\Big)\Big)\Big](f(\varrho)-f(r))d\varrho dr\nonumber\\
&=\frac{\omega_{n-1}}{2\pi}\int\limits_0^\infty\Big(\frac{e^{-r}}{r^{n+1}}\frac{\partial}{\partial\varrho}\Big(\varrho^{n}\Big(G\Big(\frac{\varrho}{r}\Big)-\widetilde{G}\Big(\frac{\varrho}{r},\frac{a}{r}\Big)\Big)\Big)
(f(\varrho)-f(r))^2\Big)\Big|^{\infty}_{r\rightarrow0^+}d\varrho\nonumber\\
&\ \ \
-\frac{\omega_{n-1}}{2\pi}\int\limits_0^\infty\int\limits_0^\infty
\frac{\partial}{\partial r}\Big[\frac{e^{-r}}{r^{n+1}}\frac{\partial}{\partial\varrho}\Big(\varrho^{n}\Big(G\Big(\frac{\varrho}{r}\Big)-\widetilde{G}\Big(\frac{\varrho}{r},\frac{a}{r}\Big)\Big)\Big)\Big](f(\varrho)-f(r))^2d\varrho dr\nonumber\\
&=\frac{\omega_{n-1}}{2\pi} B\Big(\frac12,\frac{n+1}{2}\Big)\int\limits_0^\infty\frac{(f(\varrho)-f(0))^2}{\rho^2}\Big[1-\frac{\varrho^{n+1}(\varrho^2-na^2)}{(\varrho^2+a^2)^{\frac{n+3}{2}}} \Big] d\varrho\nonumber\\
&\ \ \
-\frac{\omega_{n-1}}{2\pi}\int\limits_0^\infty\int\limits_0^\infty
e^{-r}\frac{\partial}{\partial r}\Big[\frac{1}{r^{n+1}}\frac{\partial}{\partial\varrho}\Big(\varrho^{n}\Big(G\Big(\frac{\varrho}{r}\Big)-\widetilde{G}\Big(\frac{\varrho}{r},\frac{a}{r}\Big)\Big)\Big)\Big](f(\varrho)-f(r))^2d\varrho dr\nonumber\\
&\ \ \
+\frac{\omega_{n-1}}{2\pi}\int\limits_0^\infty\int\limits_0^\infty
\frac{e^{-r}}{r^{n+1}}\frac{\partial}{\partial\varrho}\Big[\varrho^{n}\Big(G\Big(\frac{\varrho}{r}\Big)-\widetilde{G}\Big(\frac{\varrho}{r},\frac{a}{r}\Big)\Big)\Big](f(\varrho)-f(r))^2d\varrho dr\nonumber\\
&\geq
\frac{\omega_{n-1}}{2\pi} B\Big(\frac12,\frac{n+1}{2}\Big)\int\limits_0^\infty\frac{(f(\varrho)-f(0))^2}{\rho^2}\Big[1-\frac{\varrho^{n+1}(\varrho^2-na^2)}{(\varrho^2+a^2)^{\frac{n+3}{2}}} \Big] d\varrho\nonumber\\
&\ \ \
+\frac{\omega_{n-1}}{2\pi}\underbrace{\int\limits_0^\infty\int\limits_0^\infty
\frac{e^{-r}}{r^{n+1}}\frac{\partial}{\partial\varrho}\Big[\varrho^{n}\Big(G\Big(\frac{\varrho}{r}\Big)-\widetilde{G}\Big(\frac{\varrho}{r},\frac{a}{r}\Big)\Big)\Big](f(\varrho)-f(r))^2d\varrho dr}_{\mathcal{I}}.
\end{align}
By virtue of \eqref{1-varrho} and \eqref{2-varrho}, we rewrite the integral $\mathcal{I}$ as
\begin{align*}
\mathcal{I}
&=\iint\limits_{0<\varrho<r}\frac{\varrho^{n-1}(f(\varrho)-f(r))^2}{r^{n+1}e^{r}}
\Big[n\Big(G\Big(\frac{\varrho}{r}\Big)-\widetilde{G}\Big(\frac{\varrho}{r},\frac{a}{r}\Big)\Big)
+\frac{\varrho}{r}\Big(G'\Big(\frac{\varrho}{r}\Big)
-\frac{\partial \widetilde{G}}{\partial\lambda}\Big(\frac{\varrho}{r},\frac{a}{r}\Big)\Big)\Big]d\varrho dr\\
&\
-\iint\limits_{0<r<\varrho}\frac{(f(\varrho)-f(r))^2}{\varrho^2e^r}
\Big[\Big(G\Big(\frac{r}{\varrho}\Big)-\widetilde{G}\Big(\frac{r}{\varrho},\frac{a}{\varrho}\Big)\Big)
+\frac{r}{\varrho}\Big(G'\Big(\frac{r}{\varrho}\Big)
-\frac{\partial \widetilde{G}}{\partial\lambda}\Big(\frac{r}{\varrho},\frac{a}{\varrho}\Big)\Big)\Big]d\varrho dr\\
&\ \ \ \ \ \ \ \ \
+a\iint\limits_{0<r<\varrho}\frac{(f(\varrho)-f(r))^2}{\varrho^3e^r}
\frac{\partial \widetilde{G}}{\partial\mu}\Big(\frac{r}{\varrho},\frac{a}{\varrho}\Big)d\varrho dr,
\end{align*}
which follows from $G(\lambda)-\widetilde{G}(\lambda,\mu)>0$ for any $\lambda\in(0,1)$ and every $\mu>0$ and a change of variables that
\begin{align*}
\mathcal{I}
&\geq\iint\limits_{0<\varrho<r}\frac{\varrho^{n}(f(\varrho)-f(r))^2}{r^{n+2}e^{r}}
\Big[G'\Big(\frac{\varrho}{r}\Big)
-\frac{\partial \widetilde{G}}{\partial\lambda}\Big(\frac{\varrho}{r},\frac{a}{r}\Big)\Big]d\varrho dr
+a\iint\limits_{0<r<\varrho}\frac{(f(\varrho)-f(r))^2}{\varrho^3e^r}
\frac{\partial \widetilde{G}}{\partial\mu}\Big(\frac{r}{\varrho},\frac{a}{\varrho}\Big)d\varrho dr\\
&\
-\iint\limits_{0<r<\varrho}\frac{(f(\varrho)-f(r))^2}{\varrho^2e^r}
\Big[\Big(G\Big(\frac{r}{\varrho}\Big)-\widetilde{G}\Big(\frac{r}{\varrho},\frac{a}{\varrho}\Big)\Big)
+\frac{r}{\varrho}\Big(G'\Big(\frac{r}{\varrho}\Big)
-\frac{\partial \widetilde{G}}{\partial\lambda}\Big(\frac{r}{\varrho},\frac{a}{\varrho}\Big)\Big)\Big]d\varrho dr \\
&=-\underbrace{\iint\limits_{0<\varrho<r}\Big(e^{-\varrho}-e^{-r}\frac{\varrho^{n-1}}{r^{n-1}}\Big)\frac{(f(\varrho)-f(r))^2}{r^2}
\frac{\varrho}{r}\Big(G'\Big(\frac{\varrho}{r}\Big)-\frac{\partial \widetilde{G}}{\partial\lambda}\Big(\frac{\varrho}{r},\frac{a}{r}\Big)\Big)d\varrho dr}_{\mathcal{J}}\\
&\ \ \ \ \
-\underbrace{\iint\limits_{0<r<\varrho}\frac{(f(\varrho)-f(r))^2}{\varrho^2e^r}
\Big[G\Big(\frac{r}{\varrho}\Big)-\widetilde{G}\Big(\frac{r}{\varrho},\frac{a}{\varrho}\Big)
\Big]d\varrho dr}_{\mathcal{K}}
+\underbrace{a\iint\limits_{0<r<\varrho}\frac{(f(\varrho)-f(r))^2}{\varrho^3e^r}
\frac{\partial \widetilde{G}}{\partial\mu}\Big(\frac{r}{\varrho},\frac{a}{\varrho}\Big)d\varrho dr}_{\mathcal{L}}.
\end{align*}
Next we estimate the integrals $\mathcal{J}$, $\mathcal{K}$ and $\mathcal{L}$ separately.
By Lemma \ref{Key-inequality}, we can obtain a surprising fact that the coefficient $b_k(\mu)$ is uniformly bounded in $k\geq0$ and $\mu>0$, that is, for any $k\geq0$ and $\mu>0$,
\begin{align}\label{uniformly-bounded}
0<b_k(\mu)<2.
\end{align}
Furthermore, by the Mean Value Theorem, we have that, for $0<\varrho<r$,
\begin{align*}
e^{-\varrho}-e^{-r}\Big(\frac{\varrho}{r}\Big)^{n-1}
&=\rho^{n-1}\Big(\frac{e^{-\varrho}}{\varrho^{n-1}}-\frac{e^{-r}}{r^{n-1}}\Big)\\
&\leq \frac{\varrho^{n-1}}{e^\varrho}
\Big(\varrho^{1-n}+\frac{n-1}{\varrho^{n}}\Big)(r-\varrho)\\
&=\frac{r-\varrho}{e^\varrho}
+\frac{n-1}{e^\varrho}\Big(\frac{r}{\varrho}-1\Big),
\end{align*}
which along with \eqref{1-derivative-Taylor} and \eqref{uniformly-bounded} implies that
\begin{align*}
\mathcal{J}
&=\sum^\infty_{k=1}2ka_{2k}
\iint\limits_{0<\varrho<r}\Big[e^{-\varrho}-e^{-r}\Big(\frac{\varrho}{r}\Big)^{n-1}\Big]
\frac{(f(\varrho)-f(r))^2}{r^2}b_k\Big(\frac{a}{\varrho}\Big)\Big(\frac{\varrho}{r}\Big)^{2k}d\varrho dr\\
&\leq\sum^\infty_{k=1}4ka_{2k}
\iint\limits_{0<\varrho<r}\Big[e^{-\varrho}-e^{-r}\Big(\frac{\varrho}{r}\Big)^{n-1}\Big]
\frac{(f(\varrho)-f(r))^2}{r^2}\Big(\frac{\varrho}{r}\Big)^{2k}d\varrho dr\\
&\leq\sum^\infty_{k=1}4ka_{2k}
\underbrace{\iint\limits_{0<\varrho<r}
(r-\varrho)\frac{(f(\varrho)-f(r))^2}{r^2e^\varrho}\Big(\frac{\varrho}{r}\Big)^{2k}d\varrho dr}_{\mathcal{J}_{1}}\\
&\
+(n-1)\sum^\infty_{k=1}4ka_{2k}
\underbrace{\iint\limits_{0<\varrho<r}
\Big(\frac{r}{\varrho}-1\Big)\frac{(f(\varrho)-f(r))^2}{r^2e^\varrho}\Big(\frac{\varrho}{r}\Big)^{2k}d\varrho dr}_{\mathcal{J}_{2}}.
\end{align*}
The integral $\mathcal{J}_1$ can be estimated directly as, for any $k\geq1$,
\begin{align}\label{J-1-estimate}
\mathcal{J}_1
&\leq4\|f\|^2_{L^\infty}\iint\limits_{0<\varrho<r}
\frac{r-\varrho}{r^2e^\varrho}\Big(\frac{\varrho}{r}\Big)^{2k}d\varrho
dr\nonumber\\
&=4\|f\|^2_{L^\infty}\int\limits^\infty_0e^{-\varrho}\varrho^{2k}\Big(\int\limits^\infty_\varrho\frac{1-\frac{\varrho}{r}}{r^{2k+1}}dr\Big)
d\varrho\nonumber\\
&=\frac{2\|f\|^2_{L^\infty}}{k(2k+1)}<\frac{1}{k^2}\|f\|^2_{L^\infty}.
\end{align}
For the integral $\mathcal{J}_2$, we split it into two parts:
\begin{align*}
\mathcal{J}_2
&=\Big(\underbrace{\iint\limits_{0<\varrho\leq\frac{r}{2}}
}_{\mathcal{J}_{21}}
+\underbrace{\iint\limits_{\frac{r}{2}<\varrho<r}
}_{\mathcal{J}_{22}}\Big)\Big(\frac{r}{\varrho}-1\Big)\frac{(f(\varrho)-f(r))^2}{r^2e^\varrho}\Big(\frac{\varrho}{r}\Big)^{2k}d\varrho dr.
\end{align*}
Using the region restriction $0<\frac{\varrho}{r}\leq\frac12$ and the elementary inequality
\begin{eqnarray}\label{elementary-inequality}
(\alpha-\beta)^2\leq2(\alpha-\gamma)^2+2(\beta-\gamma)^2~{\rm  for}~ \alpha,\beta,\gamma\in\mathbb{R},
\end{eqnarray}
the part $\mathcal{J}_{21}$ can be bounded as, for any $k\geq1$,
\begin{align*}
\mathcal{J}_{21}
&\leq\Big(\frac{1}{2}\Big)^{2k-1}
\iint\limits_{0<\varrho\leq\frac{r}{2}}
\Big(\frac{r}{\varrho}-1\Big)\frac{(f(\varrho)-f(r))^2}{r^2e^\varrho}\cdot\frac{\varrho}{r}d\varrho dr\\
&=\frac{1}{2^{2k-1}}
\iint\limits_{0<\rho\leq\frac{r}{2}}
\Big(1-\frac{\varrho}{r}\Big)\frac{(f(\varrho)-f(r))^2}{r^2e^\varrho}d\varrho dr
\leq\frac{1}{2^{2k-1}}
\iint\limits_{0<\varrho\leq\frac{r}{2}}
\frac{(f(\varrho)-f(r))^2}{r^2e^\varrho}d\varrho dr\\
&\leq\frac{1}{2^{2k-1}}\int\limits^\infty_0\frac{(f(\varrho)-f(0))^2}{\varrho e^\varrho}d\varrho
+\frac{1}{2^{2k-2}}\int\limits^\infty_0\frac{(f(0)-f(r))^2}{r^{2}}(1-e^{-\frac{r}{2}}) dr.
\end{align*}
By \eqref{elementary-inequality} again, we further estimate the part $\mathcal{J}_{22}$ as
\begin{align*}
\mathcal{J}_{22}
&\leq2\iint\limits_{\frac{r}{2}<\varrho<r}
\Big[\frac{r}{\varrho}-1\Big]\frac{(f(\varrho)-f(0))^2}{r^2e^{\frac{\varrho}{2}}}\Big(\frac{\varrho}{r}\Big)^{2k}d\varrho dr
+2\iint\limits_{\frac{r}{2}<\varrho<r}
\Big[\frac{r}{\varrho}-1\Big]\frac{(f(0)-f(r))^2}{r^2e^{\frac{r}{2}}}\Big(\frac{\varrho}{r}\Big)^{2k}d\varrho dr\\
&=2\int\limits_0^\infty \frac{\varrho^{2k}(f(\varrho)-f(0))^2}{e^{\frac{\varrho}{2}}}\Big[\int\limits^{2\varrho}_\varrho\frac{\frac{r}{\varrho}-1}{r^{2k+2}}dr\Big]d\varrho
+2\int\limits_0^\infty \frac{(f(r)-f(0))^2}{r^{2k+2}e^{\frac{r}{2}}}\Big[\int\limits^r_{\frac{r}{2}}\varrho^{2k}\Big(\frac{r}{\varrho}-1\Big)d\varrho\Big]dr\\
&=4\Big[\frac{1-2^{-2k}}{2k}-\frac{1-2^{-2k-1}}{2k+1}\Big]\int\limits^\infty_0\frac{(f(\varrho)-f(0))^2}{\varrho e^{\frac{\varrho}{2}}}d\varrho\leq\frac{2}{k(2k+1)}\int\limits^\infty_0\frac{(f(\varrho)-f(0))^2}{\rho e^{\frac{\varrho}{2}}}d\varrho.
\end{align*}
Thus, using $2^{2k-2}\geq k^2$ for $k\geq1$, we can obtain that, for any $k\geq1$,
 \begin{eqnarray*}
\mathcal{J}_2
\leq
\frac{3}{2k^2}\int\limits^\infty_0\frac{(f(\varrho)-f(0))^2}{\varrho e^{\frac{\varrho}{2}}}d\varrho
+\frac{1}{k^2}\int\limits^\infty_0\frac{(f(0)-f(r))^2}{r^{2}}(1-e^{-\frac{r}{2}}) dr.
\end{eqnarray*}
which along with \eqref{J-1-estimate} implies that
\begin{eqnarray}\label{J}
\begin{split}
\mathcal{J}
&\leq
6(n-1)\sum^\infty_{k=1}\frac{a_{2k}}{k}\int\limits^\infty_0\frac{(f(\varrho)-f(0))^2}{\varrho e^{\frac{\varrho}{2}}}d\varrho
+
4\sum^\infty_{k=1}\frac{a_{2k}}{k}\|f\|^2_{L^\infty}\\
&\ \ \
+4(n-1)\sum^\infty_{k=1}\frac{a_{2k}}{k}\int\limits^\infty_0\frac{(f(0)-f(r))^2}{r^{2}}(1-e^{-\frac{r}{2}}) dr.
\end{split}
\end{eqnarray}
By \eqref{Taylor} and \eqref{uniformly-bounded} again, we have that
\begin{align*}
0<\mathcal{K}
&=\sum^\infty_{k=0}a_{2k}\iint\limits_{0<r<\varrho}\frac{(f(\varrho)-f(r))^2}{\varrho^{2}e^{r}} b_k\Big(\frac{a}{\varrho}\Big)\Big(\frac{r}{\varrho}\Big)^{2k}d\varrho dr\nonumber\\
&\leq\sum^\infty_{k=0}2a_{2k}\iint\limits_{0<r<\varrho}\frac{(f(\varrho)-f(r))^2}{\varrho^{2}e^{r}}
\Big(\frac{r}{\varrho}\Big)^{2k}d\varrho dr.
\end{align*}
To continue to bound the integral $\mathcal{K}$, we split it into two parts
\begin{align*}
\iint\limits_{0<r<\varrho}\frac{(f(\varrho)-f(r))^2}{\varrho^{2}e^{r}}
\Big(\frac{r}{\varrho}\Big)^{2k}d\varrho
&=\Big(\underbrace{\iint\limits_{0<r\leq\frac{\varrho}{2}}}_{\mathcal{K}_1}
+\underbrace{\iint\limits_{\frac{\varrho}{2}<r<\varrho}}_{\mathcal{K}_2}\Big)\frac{(f(\varrho)-f(r))^2}{\varrho^{2}e^{r}} \Big(\frac{r}{\varrho}\Big)^{2k}d\varrho dr.
\end{align*}
Similarly, the part $\mathcal{K}_1$ can be estimated as
\begin{eqnarray}\label{estimate-K1}
\begin{split}
\mathcal{K}_1
&\leq\Big(\frac12\Big)^{2k}\iint\limits_{0<r<\frac{\varrho}{2}}\frac{(f(\varrho)-f(r))^2}{\varrho^{2}e^r}d\varrho dr\\
&\leq\frac{1}{2^{2k-1}}\iint\limits_{0<r<\frac{\varrho}{2}}\frac{(f(\varrho)-f(0))^2}{\varrho^{2}e^r}d\varrho dr+\frac{1}{2^{2k-1}}\iint\limits_{0<r<\frac{\varrho}{2}}\frac{(f(r)-f(0))^2}{\varrho^2e^r}d\varrho dr\\
&=\frac{1}{2^{2k-1}}\int\limits_0^\infty\frac{(f(\varrho)-f(0))^2}{\varrho^{2}}(1-e^{-\frac{\varrho}{2}})d\varrho
+\frac{1}{2^{2k}}\int\limits_0^\infty \frac{(f(r)-f(0))^2}{re^r} dr.
\end{split}
\end{eqnarray}
By \eqref{elementary-inequality} again, another part $\mathcal{K}_2$ can be bounded as
\begin{align*}
\mathcal{K}_2
&\leq2\iint\limits_{\frac{\varrho}{2}<r<\varrho}\frac{(f(\varrho)-f(0))^2}{\varrho^{2}e^{r}} \Big(\frac{r}{\varrho}\Big)^{2k}
d\varrho dr+2\iint\limits_{\frac{\varrho}{2}<r<\varrho}\frac{(f(r)-f(0))^2}{\varrho^{2}e^r} \Big(\frac{r}{\varrho}\Big)^{2k}
d\varrho dr\\
&=2\int\limits_0^\infty\frac{(f(\varrho)-f(0))^2}{\varrho^{2+2k}}\int\limits^\varrho_{\frac{\varrho}{2}}\frac{r^{2k}}{e^r}dr d\varrho+2\int\limits_0^\infty \frac{r^{2k}(f(r)-f(0))^2}{e^{r}}\int\limits_r^{2r}\frac{d \varrho}{\varrho^{2+2k}}dr\\
&\leq\frac{2}{2k+1}\int\limits_0^\infty\frac{(f(\varrho)-f(0))^2}{\varrho e^{\frac\varrho2}}d\varrho
+\frac{2}{2k+1}\int\limits_0^\infty \frac{(f(r)-f(0))^2}{re^{r}}dr\\
&\leq\frac{4}{2k+1}\int\limits_0^\infty\frac{(f(\varrho)-f(0))^2}{\varrho e^{\frac\varrho2}}d\varrho.
\end{align*}
Then we obtain that, for any $k\geq0$,
\begin{align}\label{integral-estimate}
\iint\limits_{0<r<\varrho}\frac{(f(\varrho)-f(r))^2}{\varrho^{2}e^{r}}
\Big(\frac{r}{\varrho}\Big)^{2k}d\varrho dr
\leq\frac{2\int\limits_0^\infty\frac{(f(\varrho)-f(0))^2}{\varrho^{2}}(1-e^{-\frac{\varrho}{2}})d\varrho
+5\int\limits_0^\infty\frac{(f(\varrho)-f(0))^2}{\varrho e^{\frac\varrho2}}d\varrho}{2k+1},
\end{align}
where we used $2^{2k}\geq 2k+1$ for any $k\geq0$.

Thus, we obtain that
\begin{align}\label{K}
\mathcal{K}
\leq\sum^\infty_{k=0}\frac{4a_{2k}}{2k+1}\int\limits_0^\infty\frac{(f(\varrho)-f(0))^2}{\varrho^{2}}(1-e^{-\frac{\varrho}{2}})d\rho
+\sum^\infty_{k=0}\frac{10a_{2k}}{2k+1}\int\limits_0^\infty\frac{(f(\varrho)-f(0))^2}{\varrho e^{\frac\varrho2}}d\varrho
\end{align}
Finally, by \eqref{1-mu} and \eqref{widetilde-a-2k-relation}, the term $\mathcal{L}$ can be expressed by
\begin{align*}
\mathcal{L}
&=-(n+1)\sum^\infty_{k=0}\widetilde{a}_{2k}\iint\limits_{0<r<\varrho}\frac{(f(\varrho)-f(r))^2}{\varrho^2e^r}
\Big(\frac{a}{\varrho}\Big)^2\widetilde{b}_k\Big(\frac{a}{\varrho}\Big)\Big(\frac{r}{\varrho}\Big)^{2k}d\varrho dr\\
&=-\sum^\infty_{k=0}(2k+1)(2k+n+1)a_{2k}\iint\limits_{0<r<\varrho}\frac{(f(\varrho)-f(r))^2}{\varrho^2e^r}
\Big(\frac{a}{\varrho}\Big)^2\widetilde{b}_k\Big(\frac{a}{\varrho}\Big)\Big(\frac{r}{\varrho}\Big)^{2k}d\varrho dr.
\end{align*}
By \eqref{asympotic}, we may need a uniform decay in $k$ of the quantity $\mu^2|\widetilde{b}_{k}(\mu)|$.
For this purpose,
by the definition of \eqref{bkmu-2} and the identity \eqref{identity-2k-1}, we note that,
for $k\geq1$,
\begin{align*}
\mu^2\widetilde{b}_{k}(\mu)
=\frac{\mu^2\mathcal{Q}_{n,k}(\mu^2)}{(1+\mu^2)^{\frac{n+3}{2}+2k}}
&=\frac{1}{2k+1}\frac{(1+\mu^2)\mathcal{P}_{n,k}(\mu^2)-\mathcal{P}_{n+2,k}(\mu^2)}{(1+\mu^2)^{\frac{n+3}{2}+2k}}\\
&=\frac{1}{2k+1}\Big[\frac{\mathcal{P}_{n,k}(\mu^2)}{(1+\mu^2)^{\frac{n+1}{2}+2k}}-\frac{\mathcal{P}_{n+2,k}(\mu^2)}{(1+\mu^2)^{\frac{(n+2)+1}{2}+2k}}\Big].
\end{align*}
Then we present a decay rate lemma, whose proof will be given later.
\begin{lemma}\label{Decay-rate}
Let $n\geq2$ be a fixed integer.
It holds that, for any integer $k\geq1$,
\begin{align*}
\sup_{\mu>0}\Big|\frac{\mathcal{P}_{n,k}(\mu^2)}{(1+\mu^2)^{\frac{n+1}{2}+2k}}-\frac{\mathcal{P}_{n+2,k}(\mu^2)}{(1+\mu^2)^{\frac{(n+2)+1}{2}+2k}}\Big|\leq\frac{C(n)}{\sqrt{2k+1}},
\end{align*}
where  the polynomial $\mathcal{P}_{N,K}(x)$ is defined in \eqref{polynomial-definition},
and the constant $C(n)>0$ depends only on the integer $n\geq2$.
\end{lemma}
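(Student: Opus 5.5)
Write $\Phi_{N,K}(\mu):=\mathcal{P}_{N,K}(\mu^2)(1+\mu^2)^{-\frac{N+1}{2}-2K}$. The plan is to reuse the Fourier-type representation from the proof of Lemma \ref{Key-inequality}, but to exploit more than the trivial bound $|\cos|\le1$. By \eqref{derivative-identity} and \eqref{representation-identity},
\begin{align*}
\Phi_{N,K}(\mu)=\frac{\int_{\mathbb{R}}\xi^{2K}\cos(\mu\xi)w_N(\xi)d\xi}{\int_{\mathbb{R}}\xi^{2K}w_N(\xi)d\xi}.
\end{align*}
So $\Phi_{N,K}$ is the characteristic function of the even probability density $p_{N,K}(\xi):=\xi^{2K}w_N(\xi)/\int\xi^{2K}w_N$. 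The quantity in Lemma \ref{Decay-rate} is then
\begin{align*}
\Phi_{n,k}(\mu)-\Phi_{n+2,k}(\mu)=\int_{\mathbb{R}}\cos(\mu\xi)\big(p_{n,k}(\xi)-p_{n+2,k}(\xi)\big)d\xi .
\end{align*}
It is bounded uniformly in $\mu$ by the total variation $\|p_{n,k}-p_{n+2,k}\|_{L^1}$. Hence it suffices to show that this $L^1$ distance is $O(k^{-1/2})$.

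To estimate the $L^1$ distance I would make $w_N$ explicit. Since $\int_0^\infty\tau^{\nu-1}e^{-\tau-\xi^2/(4\tau)}d\tau=2(|\xi|/2)^{\nu}K_\nu(|\xi|)$, we get $w_N(\xi)=c_N|\xi|^{N/2}K_{N/2}(|\xi|)$, with $K_\nu$ the modified Bessel function. The cleaner route is to write $p_{N,K}$ as a mixture over $\tau$. In the $\tau$-representation, $\xi^{2K}e^{-\xi^2/4\tau}$ normalized is the density of $\sqrt{\tau}\,X_K$, where $X_K$ is a fixed chi-type variable with density proportional to $\xi^{2K}e^{-\xi^2/4}$. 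The mixing weight in $\tau$ is proportional to $\tau^{K+\frac{N-1}{2}}e^{-\tau}$, i.e.\ a Gamma$(K+\frac{N+1}{2})$ law. Thus $p_{N,K}$ is the law of $\sqrt{T_{N,K}}\,X_K$ with $T_{N,K}\sim\mathrm{Gamma}(K+\frac{N+1}{2})$ independent of $X_K$. Passing from $N=n$ to $N=n+2$ only shifts the Gamma shape by $1$, with the same $X_K$. Therefore
\begin{align*}
\|p_{n,k}-p_{n+2,k}\|_{L^1}\le\|\mathrm{Gamma}(s)-\mathrm{Gamma}(s+1)\|_{TV},\qquad s=k+\tfrac{n+1}{2},
\end{align*}
because total variation does not increase under the same measurable map and independent product.

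The main step is then the elementary bound $\|\mathrm{Gamma}(s)-\mathrm{Gamma}(s+1)\|_{TV}\le C/\sqrt{s}$. The densities satisfy $g_{s+1}(\tau)=\frac{\tau}{s}g_s(\tau)$, so
\begin{align*}
\int|g_s-g_{s+1}|\,d\tau=\mathbb{E}\Big|1-\frac{T}{s}\Big|\le\frac{\sqrt{\mathrm{Var}(T)}}{s}=\frac{1}{\sqrt s},
\end{align*}
where $T\sim\mathrm{Gamma}(s)$, and the inequality is Cauchy--Schwarz. This yields $\sup_\mu|\Phi_{n,k}-\Phi_{n+2,k}|\le (k+\frac{n+1}{2})^{-1/2}\le C(n)/\sqrt{2k+1}$, which is the claim.

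The point needing care is the mixture identity. I would verify it directly from the double integral $\int_0^\infty\tau^{\frac N2-1}e^{-\tau}\,\xi^{2K}e^{-\xi^2/4\tau}d\tau$, substituting $\xi=\sqrt\tau\,\eta$. This substitution produces the factor $\tau^{K+\frac12}$ and hence the Gamma shape $K+\frac{N+1}{2}$, matching the moment formula already computed in the paper. This would circumvent the Jacobi-polynomial Bernstein estimate of Lemma \ref{Bernstein-Lemma}, which the author presumably intends to use instead; if the probabilistic route were unwanted, I would instead express $\Phi_{N,K}$ via \eqref{hypergeometric-Jacobi} and apply Lemma \ref{Bernstein-Lemma}.
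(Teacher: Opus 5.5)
Your argument is correct, and it takes a genuinely different route from the paper's.

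\textbf{The paper's route.} It writes $\mathcal{P}_{N,k}(\mu^2)={}_2F_1(-k,-\frac N2-k;\frac12;-\mu^2)$ and uses Pfaff's transformation with \eqref{hypergeometric-Jacobi} to express each quotient through $P^{(-\frac12,\frac N2)}_k(2q-1)$, where $q=(1+\mu^2)^{-1}$. The contiguous relation \eqref{Jacobi-identity} then collapses the difference to the single term $\frac{(2k)!!}{(2k-1)!!}(1-q)q^{k+\frac n2+\frac12}P^{(\frac12,\frac n2)}_k(2q-1)$. It closes with the weighted Bernstein bound of Lemma \ref{Bernstein-Lemma}, proved via Hilb's asymptotics and Bessel estimates, together with $\max_{0<q<1} q^k\sqrt{1-q}$.

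\textbf{Your route.} You stay inside the Fourier representation already built for Lemma \ref{Key-inequality}. Your Gamma-mixture identity checks out and reproduces the paper's moment formula. It shows the difference equals $\mathbb{E}[\cos(\mu\sqrt{T}X_k)(1-T/s)]$, with $T\sim\mathrm{Gamma}(s)$, $s=k+\frac{n+1}{2}$, and $T$ independent of $X_k$. Cauchy--Schwarz then gives $s^{-1/2}\le\sqrt2/\sqrt{2k+1}$.

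\textbf{Comparison.} Your approach is elementary and self-contained, and it avoids Lemma \ref{Bernstein-Lemma} and its special-function machinery entirely. It gives the explicit constant $C(n)=\sqrt2$, uniform in $n$, and it works for any real $N>-1$. What the paper's route buys is the exact closed form \eqref{Decay-representation} for the difference. The subsequent remark uses that formula, via Darboux's asymptotics, to show the $k^{-1/2}$ rate is optimal; your total-variation bound yields only the upper estimate.
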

By Lemma \ref{Decay-rate}, we obtain that, for any $k\geq1$,
\begin{align*}
\sup_{\mu>0}|\mu^2\widetilde{b}_{k}(\mu)|
\leq\frac{C(n)}{(2k+1)\sqrt{2k+1}},
\end{align*}
which along with the fact
\begin{align*}
0\leq\mu^2|\widetilde{b}_0(\mu)|=\frac{\mu^2}{(1+\mu^2)^{\frac{n+3}{2}}}\leq1,~{\rm for~any~} \mu>0,
\end{align*}
and the integral estimate \eqref{integral-estimate} yields that
\begin{align*}
|\mathcal{L}|
&\leq\sum^\infty_{k=0}(2k+1)(2k+n+1)a_{2k}\iint\limits_{0<r<\varrho}\frac{(f(\varrho)-f(r))^2}{\varrho^2e^r}
\Big(\frac{a}{\varrho}\Big)^2\Big|\widetilde{b}_k\Big(\frac{a}{\varrho}\Big)\Big|\Big(\frac{r}{\varrho}\Big)^{2k}d\varrho dr\\
&\leq C_n\sum^\infty_{k=0}\frac{2k+n+1}{\sqrt{2k+1}}a_{2k}\iint\limits_{0<r<\varrho}\frac{(f(\varrho)-f(r))^2}{\varrho^2e^r}
\Big(\frac{r}{\varrho}\Big)^{2k}d\varrho dr\\
&\leq C_n\sum^\infty_{k=0}\frac{(2k+n+1)a_{2k}}{(2k+1)\sqrt{2k+1}}\Big[2\int\limits_0^\infty\frac{(f(\varrho)-f(0))^2}{\varrho^{2}(1-e^{-\frac{\varrho}{2}})^{-1}}d\varrho
+5\int\limits_0^\infty\frac{(f(\varrho)-f(0))^2}{\varrho e^{\frac\varrho2}}d\varrho\Big]\\
&\leq (n+1)C_n\sum^\infty_{k=0}\frac{a_{2k}}{\sqrt{2k+1}}\Big[2\int\limits_0^\infty\frac{(f(\varrho)-f(0))^2}{\varrho^{2}(1-e^{-\frac{\varrho}{2}})^{-1}}d\varrho
+5\int\limits_0^\infty\frac{(f(\varrho)-f(0))^2}{\varrho e^{\frac\varrho2}}d\varrho\Big].
\end{align*}
It follows from \eqref{K} and \eqref{J} that
\begin{eqnarray*}
\begin{split}
 \mathcal{I}
&\geq
-\Big[6(n-1)\sum^\infty_{k=1}\frac{a_{2k}}{k}
+C_n\sum^\infty_{k=0}\frac{a_{2k}}{\sqrt{2k+1}}
\Big]\int\limits^\infty_0\frac{(f(\varrho)-f(0))^2}{\varrho e^{\frac{\varrho}{2}}}d\varrho-4\|f\|^2_{L^\infty}\sum^\infty_{k=1}\frac{a_{2k}}{k}\\
&\
-\Big[4(n-1)\sum^\infty_{k=1}\frac{a_{2k}}{k}
+C_n\sum^\infty_{k=0}\frac{a_{2k}}{\sqrt{2k+1}}\Big]\int\limits_0^\infty\frac{(f(\varrho)-f(0))^2}{\varrho^{2}}(1-e^{-\frac{\varrho}{2}})d\varrho.
\end{split}
\end{eqnarray*}
Taking
\begin{eqnarray*}
\varepsilon=\frac{B(\frac12,\frac{n+1}{2})}{4\Big[6(n-1)\displaystyle\sum^\infty_{k=1}\frac{a_{2k}}{k}
+C_n\displaystyle\sum^\infty_{k=0}\frac{a_{2k}}{\sqrt{2k+1}}
\Big]}
\end{eqnarray*}
in the first inequality of Lemma \ref{Young-inequality-Weight} and  another
\begin{eqnarray*}
\varepsilon=\frac{ B(\frac12,\frac{n+1}{2})}{4\Big[4(n-1)\displaystyle\sum^\infty_{k=1}\frac{a_{2k}}{k}
+C_n\displaystyle\sum^\infty_{k=0}\frac{a_{2k}}{\sqrt{2k+1}}\Big]}
\end{eqnarray*}
in the second inequality of Lemma \ref{Young-inequality-Weight}, it follows from \eqref{representation-2} that
\begin{align*}
-\int\limits_{\mathbb{R}^n}\frac{\mathcal{R}_af(x)\cdot\nabla f(x)}{|x|^{n}}e^{-|x|}dx
&\geq\frac{B(\frac12,\frac{n+1}{2})}{4\pi\omega^{-1}_{n-1}} \int\limits_0^\infty\frac{(f(\varrho)-f(0))^2}{\rho^2}\Big[1-\frac{\varrho^{n+1}(\varrho^2-na^2)}{(\varrho^2+a^2)^{\frac{n+3}{2}}} \Big] d\varrho
-C_{n,a}\|f\|^2_{L^\infty}\\
&=\frac{B(\frac12,\frac{n+1}{2})}{4\pi}
\int\limits_{\mathbb{R}^n}\frac{(f(0)-f(x))^2}{|x|^{n+1}}\Big[1-\frac{|x|^{n+1}(|x|^2-na^2)}{(|x|^2+a^2)^{\frac{n+3}{2}}}\Big]dx
-C_{n,a}\|f\|^2_{L^\infty}.
\end{align*}
We then complete the proof of Proposition \ref{nonlinear-weighted-inequality-exponential}. \hfill\hfill$\square$\vskip12pt

\begin{remark}
It is readily seen that the constant $C_{n,a}$ in the Proposition \ref{nonlinear-weighted-inequality-exponential} can be computed explicitly as
\begin{align*}
C_{n,a}
&=\frac{\omega_{n-1}}{2\pi}
\Big[\frac{4\Big[6(n-1)\displaystyle\sum^\infty_{k=1}\frac{a_{2k}}{k}
+C_n\sum^\infty_{k=0}\frac{a_{2k}}{\sqrt{2k+1}}
\Big]^2+\Big[4(n-1)\sum^\infty_{k=1}\frac{a_{2k}}{k}
+C_n\sum^\infty_{k=0}\frac{a_{2k}}{\sqrt{2k+1}}\Big]^2}{B(\frac12,\frac{n+1}{2})}\\
&\ \ \ \ \ \ \ \ \ \ \ \ \
+\frac{64(n-1)\displaystyle\sum^\infty_{k=1}\frac{a_{2k}}{k}
+12C_n\sum^\infty_{k=0}\frac{a_{2k}}{\sqrt{2k+1}}}{\min\Big\{1,a\sqrt{\frac{n}{3}}\Big\}}+4\sum^\infty_{k=1}\frac{a_{2k}}{k}
\Big],
\end{align*}
which depends only on the dimension $n$ when $a>\sqrt{\frac{3}{n}}$. Consequently, when $a\rightarrow\infty$, the integral inequality for the transform $\mathcal{R}_a$ in Proposition \ref{nonlinear-weighted-inequality-exponential}  formally reduces to the integral inequality \eqref{Riesz-transform-integral-inequality} for the Riesz transform $\mathcal{R}$.
\end{remark}
Now we give the proof of Lemma \ref{Decay-rate}.

\textbf{Proof of Lemma \ref{Decay-rate}.}
Note that
\begin{align*}
\displaystyle\prod^{j-1}_{l=0}\frac{N+2(k-l)}{1+2l}=(-1)^j\frac{(-\frac{N}{2}-k)_j}{(\frac12)_j}~{\rm and}~\binom{k}{j}
=(-1)^j\frac{(-k)_j}{j!}.
\end{align*}
Hence
\begin{align*}
\mathcal{P}_{N,k}(\mu^2)
=1+\displaystyle\sum^{k}_{j=1}\frac{(-k)_j(-\frac{N}{2}-k)_j}{(\frac12)_j}\frac{(-\mu^2)^j}{j!}
={}_2F_1\Big(-k,-\frac{N}{2}-k;\frac12;-\mu^2\Big).
\end{align*}
Here we used the fact that $(-k)_j=0$ for any $j\geq k+1$.
By the Pfaff's transformation \eqref{Pfaffs-identity} and \eqref{hypergeometric-Jacobi}, we can obtain that
\begin{align*}
\frac{\mathcal{P}_{N,k}(\mu^2)}{(1+\mu^2)^{\frac{N+1}{2}+2k}}
&=\frac{(1+\mu^2)^k{}_2F_1\Big(-k,\frac{N+1}{2}+k;\frac12;\frac{\mu^2}{1+\mu^2}\Big)}{(1+\mu^2)^{\frac{N+1}{2}+2k}}\\
&=\frac{(2k)!!}{(2k-1)!!}(1+\mu^2)^{-k-\frac{N}{2}-\frac{1}{2}}P^{(-\frac12,\frac{N}{2})}_k\Big(\frac{1-\mu^2}{1+\mu^2}\Big)\\
&=\frac{(2k)!!}{(2k-1)!!}q^{k+\frac{N}{2}+\frac12}P^{(-\frac12,\frac{N}{2})}_k(2q-1),
\end{align*}
where we set $q:=\frac{1}{1+\mu^2}\in(0,1)$. It follows from \eqref{Jacobi-identity} that
\begin{align}\label{Decay-representation}
\mathcal{A}_{n,k}(\mu)
&:=\frac{\mathcal{P}_{n,k}(\mu^2)}{(1+\mu^2)^{\frac{n+1}{2}+2k}}-\frac{\mathcal{P}_{n+2,k}(\mu^2)}{(1+\mu^2)^{\frac{(n+2)+1}{2}+2k}}\nonumber\\
&=\frac{(2k)!!}{(2k-1)!!}q^{k+\frac{n}{2}+\frac12}\Big[P^{(-\frac12,\frac{n}{2})}_k(2q-1)-qP^{(-\frac12,\frac{n}{2}+1)}_k(2q-1)\Big]\nonumber\\
&=\frac{(2k)!!}{(2k-1)!!}(1-q)q^{k+\frac{n}{2}+\frac12}P^{(\frac12,\frac{n}{2})}_k(2q-1).
\end{align}
The weighted Bernstein estimates for Jacobi polynomials in Lemma \ref{Bernstein-Lemma} gives that
\begin{align*}
\Big|P^{(\frac12,\frac{n}{2})}_k(2q-1)\Big|\leq \frac{C(n)}{\sqrt{k}}(1-q)^{-\frac12}q^{-\frac{n}{4}-\frac14}.
\end{align*}
Note that, for any $k\geq1$,
\begin{align*}
(2k)!!
=2\sqrt{k}\displaystyle\prod^{k-1}_{l=1}\sqrt{2l(2l+2)}
\leq2\sqrt{k}\displaystyle\prod^{k-1}_{l=1}(2l+1)
=2\sqrt{k}(2k-1)!!,
\end{align*}
we deduce that, for any $\mu>0$,
\begin{align*}
|\mathcal{A}_{n,k}(\mu)|
&\leq2\sqrt{k}(1-q)q^{k+\frac{n}{2}+\frac12}\cdot \frac{C(n)}{\sqrt{k}}(1-q)^{-\frac12}q^{-\frac{n}{4}-\frac14}\\
&=C(n)q^{k+\frac{n}{4}+\frac14}\sqrt{1-q}\leq C(n)q^{k}\sqrt{1-q}\\
&\leq C(n)\Big(\frac{2k}{2k+1}\Big)^k\frac{1}{\sqrt{2k+1}}
\leq\frac{C}{\sqrt{2k+1}},
\end{align*}
where we have used the fact
\begin{align*}
\displaystyle\max_{0<q<1}q^k\sqrt{1-q}=\Big(\frac{2k}{2k+1}\Big)^k\frac{1}{\sqrt{2k+1}}.
\end{align*}
The proof of Lemma \ref{Decay-rate} is then completed.
\hfill\hfill$\square$\vskip12pt
\begin{remark}
It should be remarked that the decay rate in Lemma \ref{Decay-rate} is optimal. The lower-bound construction is as follows. For any $k\geq1$, we define
\begin{align*}
\nu_k:=k+\frac{n}{4}+\frac34,~~m_k:=\Big\lfloor\frac{\sqrt{k}}{2\pi}\Big\rfloor,~~\theta_k:=\frac{2\pi m_k+\frac{\pi}{2}}{\nu_k}
\end{align*}
and
\begin{align*}
q_k:=\cos^2\frac{\theta_k}{2},~~~\mu_k:=\tan\frac{\theta_k}{2}.
\end{align*}
Then
\begin{align*}
q_k=\frac{1}{1+\mu^2_k},~~~~2q_k-1=\cos\theta_k.
\end{align*}
For fixed $n\geq2$, the Darboux asymptotic formula for  Jacobi polynomials (see Theorem 8.21.13 in \cite{[Szego]}) yields that
\begin{align*}
P^{(\frac12,\frac{n}{2})}_k(\cos\theta)
=\frac{1}{\sqrt{\pi k}\sin\frac{\theta}{2}\cos^{\frac{n}{2}+\frac12}\frac{\theta}{2}}\Big[\cos\Big(\nu_k\theta-\frac{\pi}{2}\Big)+O\Big(\frac{1}{k\sin\theta}\Big)\Big].
\end{align*}
By the definition of $\theta_k$, we have that
\begin{align*}
\cos\Big(\nu_k\theta_k-\frac{\pi}{2}\Big)=\cos(2\pi m_k)\equiv1,~~\lim_{k\rightarrow\infty}\sqrt{k}\theta_k=1,
\end{align*}
which implies that
\begin{align*}
\lim_{k\rightarrow\infty}P^{(\frac12,\frac{n}{2})}_k(\cos\theta_k)
=\frac{1}{\sqrt{\pi }}\lim_{k\rightarrow\infty}\frac{1+O\Big(\frac{1}{k\sin\theta_k}\Big)}{\sqrt{ k}\sin\frac{\theta_k}{2}\cos^{\frac{n}{2}+\frac12}\frac{\theta_k}{2}}=\frac{2}{\sqrt{\pi}}.
\end{align*}
Furthermore,
\begin{align*}
\lim_{k\rightarrow\infty}k(1-q_k)=\lim_{k\rightarrow\infty}k\sin^2\frac{\theta_k}{2}=\frac{1}{4}
\end{align*}
and
\begin{align*}
\lim_{k\rightarrow\infty}q_k^{k+\frac{n}{2}+\frac12}
=e^{\lim\limits_{k\rightarrow\infty}(2k+n+1)\ln\cos\frac{\theta_k}{2}}
=e^{-2\lim\limits_{k\rightarrow\infty}(2k+n+1)\sin^2\frac{\theta_k}{4}}
=e^{-\frac14}.
\end{align*}
Inserting these limits into \eqref{Decay-representation} implies that
\begin{align*}
\lim_{k\rightarrow\infty}\sqrt{k}\mathcal{A}_{n,k}(\mu_k)
&=\lim_{k\rightarrow\infty}\frac{1}{\sqrt{k}}\frac{(2k)!!}{(2k-1)!!}\lim_{k\rightarrow\infty}k(1-q_k)\lim_{k\rightarrow\infty}q_k^{k+\frac{n}{2}+\frac12}\lim_{k\rightarrow\infty}P^{(\frac12,\frac{n}{2})}_k(\cos\theta_k)\\
&=\sqrt{\pi}\cdot\frac{1}{4}\cdot e^{-\frac14}\cdot\frac{2}{\sqrt{\pi}}=\frac12e^{-\frac14},
\end{align*}
which shows that the decay rate in Lemma \ref{Decay-rate} is indeed optimal.
\end{remark}
\section{Finite-time blow-up for smooth radial data}
\hskip\parindent In this section, we consider the finite time blow-up of the solution to \eqref{M-KS} for smooth radial data. Proposition \ref{n-nonlinear-inequality-monotone} can be used to show that the family of smooth, compactly supported, radial and non-decreasing initial data with negative value at the origin to \eqref{M-KS} undergo finite time blow-up. The first blow-up result can be stated as
\begin{theorem}\label{singularity-formation-monotone}
Let $n\geq2$ and $a,g>0$.
Suppose $\rho_0\in C^\infty_c(\mathbb{R}^n)$ is radial and non-decreasing with $\rho_0(0)<0$.
Then the solution $\rho$ to \eqref{M-KS} with the initial data $\rho_0$ develops the gradient blow-up in finite time.
\end{theorem}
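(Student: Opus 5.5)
Argue by contradiction: assume the solution $\rho$ is global and smooth, so by Lemma \ref{local} $\int_0^t\|\nabla\rho\|_{L^\infty}d\tau<\infty$ for every finite $t$. By Lemma \ref{radial-property-preserved}, $\rho(\cdot,t)$ stays radial and nondecreasing. Evaluating the equation at the origin, where $\nabla\rho(0,t)=0$ by radial symmetry, gives $\rho(0,t)=\rho_0(0)<0$ for all $t$. Transport by a smooth velocity also preserves $\|\rho\|_{L^\infty}=\|\rho_0\|_{L^\infty}$. Since $\rho_0$ is compactly supported and nondecreasing in $|x|$, we have $\rho_0\le 0$ with $\rho_0(x)=0$ for $|x|\ge R_0$. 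Hence $\rho(\cdot,t)\le0$, and $\rho(x,t)\to0$ as $|x|\to\infty$ (finite propagation in $L^\infty$ is not needed; monotonicity and decay suffice). Thus the jump $\rho(\infty,t)-\rho(0,t)=|\rho_0(0)|$ is conserved.

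Fix $\delta\in(0,1)$ and consider the functional
\[
\Phi(t):=-\int_{\mathbb{R}^n}\frac{\rho(x,t)-\rho(0,t)}{|x|^{n+\delta}}\,dx\le 0 ,
\]
or equivalently track $\int\frac{\rho(x,t)-\rho(0,t)}{|x|^{n+\delta}}dx\ge0$. This is finite: near $0$ the integrand is $O(|x|^{2-n-\delta})$ by smoothness and evenness, and near infinity it is $O(|x|^{-n-\delta})$. Differentiating and using $\partial_t\rho(0,t)=0$ gives
\[
\frac{d}{dt}\int\frac{\rho-\rho(0)}{|x|^{n+\delta}}dx=-g\int\frac{\mathcal{R}_a\rho\cdot\nabla\rho}{|x|^{n+\delta}}dx\ge gC_{n,\delta}\int\frac{(\rho(x)-\rho(0))^2}{|x|^{n+1+\delta}}\Big[1-\frac{|x|^{n+1}}{(|x|^2+a^2)^{\frac{n+1}{2}}}\Big]dx
\]
by Proposition \ref{n-nonlinear-inequality-monotone}. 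The regularity hypothesis $\nabla\rho\in L^1\cap L^\infty$ is checked from the $H^s$ regularity and the compact support, which is preserved approximately or handled by decay.

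Next, close a Riccati inequality. Set $Y(t)=\int\frac{\rho-\rho(0)}{|x|^{n+\delta}}dx$. By Cauchy--Schwarz on a ball $B_R$, writing $w(x)=1-\frac{|x|^{n+1}}{(|x|^2+a^2)^{(n+1)/2}}$, which is decreasing and bounded below by $w(R)>0$ on $B_R$,
\[
\int_{B_R}\frac{\rho-\rho(0)}{|x|^{n+\delta}}\le\Big(\int\frac{(\rho-\rho(0))^2w}{|x|^{n+1+\delta}}\Big)^{1/2}\Big(\frac{\omega_{n-1}R^{1-\delta}}{(1-\delta)w(R)}\Big)^{1/2}.
\]
The exterior part is bounded by $|\rho_0(0)|\omega_{n-1}R^{-\delta}/\delta$. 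Fix $R$ large, say $R=R_0$ or larger. The upper bound $Y\le C(R)\,(\cdots)^{1/2}+|\rho_0(0)|\omega_{n-1}R^{-\delta}/\delta$ is fine, but we need $Y$ to dominate the exterior constant. The key point is that $Y(t)$ is nondecreasing, and at $t=0$ we already have $Y(0)\ge\int_{|x|\ge R_0}\frac{|\rho_0(0)|}{|x|^{n+\delta}}dx=|\rho_0(0)|\omega_{n-1}R_0^{-\delta}/\delta$. Choosing $R=2R_0$ in the splitting makes the exterior term at most $2^{-\delta}Y(0)\le 2^{-\delta}Y(t)$. Hence $(1-2^{-\delta})Y\le C\,(\cdots)^{1/2}$, so $Y'\ge cY^2$ with $c=c(n,\delta,a,g,R_0)>0$ and $Y(0)>0$. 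This yields $Y(t)\to\infty$ before $T=1/(cY(0))$.

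Finally, $Y(t)\le\int_{B_1}\frac{\|\nabla\rho\|_{L^\infty}|x|}{|x|^{n+\delta}}dx+2|\rho_0(0)|\omega_{n-1}/\delta\le C(1+\|\nabla\rho(t)\|_{L^\infty})$. So $\|\nabla\rho\|_{L^\infty}$ must become infinite by time $T$, contradicting smoothness; by Lemma \ref{local} this is gradient blow-up. The main obstacle is the Riccati closure, in particular handling the weight $w$ degenerating like $|x|^{-2}$ at infinity. That is why I would restrict to a fixed ball and use monotonicity of $Y$ to absorb the tail. A second technical point is justifying the differentiation under the integral and the hypotheses of Proposition \ref{n-nonlinear-inequality-monotone} for the solution, which I would handle via the $H^s$ regularity for $s$ large.
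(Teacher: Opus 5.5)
Your argument is correct and has the same skeleton as the paper's proof: argue by contradiction, use $\rho(0,t)=\rho_0(0)$, differentiate a functional $\int(\rho-\rho(0,t))|x|^{-n-\delta}dx$, apply Proposition \ref{n-nonlinear-inequality-monotone}, close a Riccati inequality by Cauchy--Schwarz, and bound the functional by $\|\nabla\rho\|_{L^\infty}$. The difference is in how you handle the far field.

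The paper uses that $\operatorname{supp}\rho(\cdot,t)\subset B_L(0)$ for all $t$. This holds because, by Proposition \ref{lower-bound-Ra}, $\mathcal{R}_a\rho\cdot\frac{x}{|x|}\le 0$, so characteristics move inward. The paper then integrates only over $B_L(0)$, replaces $\int_{B_L(0)}$ by $\int_{\mathbb{R}^n}$ in $J'(t)$ because $\nabla\rho$ vanishes outside the ball, and bounds the weight from below by its value at $|x|=L$. This gives $J'\ge cJ^2$ at once, and $|J|\lesssim\|\nabla\rho\|_{L^\infty}$ with no additive constant.

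You instead integrate over all of $\mathbb{R}^n$ and absorb the tail using only three facts: $0\le\rho-\rho(0,t)\le|\rho_0(0)|$, the monotonicity of $Y$, and the a priori bound $Y(0)\ge|\rho_0(0)|\omega_{n-1}R_0^{-\delta}/\delta$. That step is valid and needs no support control. It would also handle radial nondecreasing data with $\rho_0(0)<0$ that only tend to $0$ at infinity: take $R$ large enough that the tail bound is at most $Y(0)/2$.

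The one loose end is the hypothesis $\nabla\rho(\cdot,t)\in L^1(\mathbb{R}^n)$ required by Proposition \ref{n-nonlinear-inequality-monotone}. Your phrase ``preserved approximately or handled by decay'' is not an argument. $H^s$ regularity together with radial monotonicity does not imply it in general; profiles decaying like $|x|^{-\gamma}$ with suitable $\gamma$ are in every $H^s$ but have $\nabla\rho\notin L^1$. The natural fix is the support-preservation observation the paper relies on, which gives $\operatorname{supp}\rho(\cdot,t)\subset\overline{B_{R_0}(0)}$. Once you have that, the tail of $Y$ is constant in time, and your absorption trick, while still correct, is no longer needed.
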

\textbf{Proof.} The proof of this theorem requires only minor modifications to the one of Theorem 1.2 in \cite{[Zhang-JDE2026]}, and we will provide a brief outline for completeness.

Suppose that the initial data $\rho_0\in C^\infty_c(\mathbb{R}^n)$ is radial and non-decreasing with $\rho_0(0)<0$. Moreover, suppose that $\rho_0$ is supported in a ball $B_L(0)$ for some $L>0$.
Assume for contradiction that the corresponding unique local-in-time solution $\rho$ to \eqref{M-KS}  exists for all time.
Then it is not difficult to see that $\rho$ is radial non-decreasing on $\mathbb{R}^n$ for all time $t>0$, and that for any $t>0$ the support of $\rho(\cdot,t)$ lies inside $B_L(0)$.

We proceed to denote
\begin{eqnarray*}
J(t):=\int\limits_{B_L(0)}\frac{\rho(x,t)-\rho(0,t)}{|x|^{n+\delta}}dx,
\end{eqnarray*}
where $\delta\in(0,1)$ is arbitrarily fixed.
By H\"{o}lder's inequality, we have that,  for all $t>0$,
\begin{align*}
|J(t)|
\leq \|\nabla\rho\|_{L^\infty}\int\limits_{|x|\leq L}\frac{dx}{|x|^{n-1+\delta}}
=\frac{\omega_{n-1}L^{1-\delta}}{1-\delta}\|\nabla\rho\|_{L^\infty}<+\infty.
\end{align*}
Next we prove that $J(t)$ will blow up in finite time.
To this end, by Lemma \ref{radial-property-preserved}, we know that the velocity at the origin is $0$,
and then
\begin{eqnarray}\label{evolution-at-origin}
\rho(0,t)=\rho_0(0),{\rm  for~ any}~ t>0.
\end{eqnarray}
Then, by  \eqref{M-KS} and  Proposition \ref{n-nonlinear-inequality-monotone}, we compute
\begin{align*}
\frac{d}{dt}J(t)
&=-g\int\limits_{B_L(0)}\frac{\mathcal{R}_a \rho(x,t)\cdot\nabla \rho(x,t)}{|x|^{n+\delta}}dx
=-g\int\limits_{\mathbb{R}^n}\frac{\mathcal{R}_a \rho(x,t)\cdot\nabla \rho(x,t)}{|x|^{n+\delta}}dx\\
&\geq gC_{n,\delta}
\int\limits_{\mathbb{R}^n}\frac{(\rho(x,t)-\rho(0,t))^2}{|x|^{n+1+\delta}}\Big[1-\frac{|x|^{n+1}}{(|x|^2+a^2)^{\frac{n+1}{2}}}\Big]dx\\
&\geq gC_{n,\delta}
\int\limits_{B_L(0)}\frac{(\rho(x,t)-\rho(0,t))^2}{|x|^{n+1+\delta}}\Big[1-\frac{|x|^{n+1}}{(|x|^2+a^2)^{\frac{n+1}{2}}}\Big]dx\\
&\geq gC_{n,\delta}
\Big[1-\frac{L^{n+1}}{(L^2+a^2)^{\frac{n+1}{2}}}\Big]\int\limits_{B_L(0)}\frac{(\rho(x,t)-\rho(0,t))^2}{|x|^{n+1+\delta}}dx\\
&\geq gC_{n,\delta}
\Big[1-\frac{L^{n+1}}{(L^2+a^2)^{\frac{n+1}{2}}}\Big]\Big(\int\limits_{B_L(0)}\frac{dx}{|x|^{n-1+\delta}}\Big)^{-1}(J(t))^2\\
&=\frac{g(1-\delta)C_{n,\delta}}{\omega_{n-1}L^{1-\delta}}
\Big[1-\frac{L^{n+1}}{(L^2+a^2)^{\frac{n+1}{2}}}\Big](J(t))^2.
\end{align*}
Since
\begin{eqnarray*}
J(0)=\int\limits_{B_L(0)}\frac{\rho_0(x)-\rho_0(0)}{|x|^{n+\delta}}dx>0,
\end{eqnarray*}
$J(t)$ must blow up in finite time, leading to a contradiction. The proof of Theorem \ref{singularity-formation-monotone} is finished.
\hfill\hfill$\square$\vskip12pt

Proposition \ref{nonlinear-weighted-inequality-exponential} can be used to establish the blow-up of the solutions to \eqref{M-KS} for the radial initial data which is not necessarily radially monotone. The main result of this aspect is stated as
\begin{theorem}\label{the-1}
Let the initial data $\rho_0$ be a radial Schwartz function.
There exists a constant $A(n,a)>0$ depending only on $n$ and $a$ such that if
\begin{eqnarray}\label{Class-initial-data}
\int\limits_{\mathbb{R}^n}\frac{\rho_0(x)-\rho_0(0)}{|x|^n}e^{-|x|}dx\geq A(n,a)\|\rho_0\|_{L^\infty},
\end{eqnarray}
then the smooth solution $\rho$ to \eqref{M-KS} blows up in finite time.
\end{theorem}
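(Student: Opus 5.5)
We argue by contradiction, in the spirit of the proof of Theorem \ref{singularity-formation-monotone}, but with the exponentially weighted functional
\begin{eqnarray*}
J(t):=\int\limits_{\mathbb{R}^n}\frac{\rho(x,t)-\rho(0,t)}{|x|^n}e^{-|x|}dx .
\end{eqnarray*}
Suppose the smooth solution $\rho$ exists globally. By the radial symmetry of $\rho$ (the argument of Lemma \ref{radial-property-preserved} gives preservation of radiality without monotonicity), the velocity $g\mathcal{R}_a\rho$ vanishes at the origin. Hence $\rho(0,t)=\rho_0(0)$ for all $t$. Transport also gives $\|\rho(t)\|_{L^\infty}\le\|\rho_0\|_{L^\infty}$.

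Differentiating $J$ and using \eqref{M-KS} gives
\begin{eqnarray*}
J'(t)=-g\int\limits_{\mathbb{R}^n}\frac{\mathcal{R}_a\rho\cdot\nabla\rho}{|x|^ne^{|x|}}dx .
\end{eqnarray*}
Proposition \ref{nonlinear-weighted-inequality-exponential} then yields
\begin{eqnarray*}
J'(t)\ge \frac{gB(\frac12,\frac{n+1}{2})}{4\pi}\int\limits_{\mathbb{R}^n}\frac{(\rho(x,t)-\rho(0,t))^2}{|x|^{n+1}}w_a(|x|)dx-gC_{n,a}\|\rho_0\|^2_{L^\infty},
\end{eqnarray*}
with the weight $w_a$ from \eqref{weight-definition}. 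The next step is to bound $J$ from above by this weighted quadratic quantity. Write $J=\omega_{n-1}\int_0^\infty \frac{f(r)-f(0)}{r}e^{-r}dr$ and apply Cauchy--Schwarz:
\begin{eqnarray*}
J^2\le \omega_{n-1}^2\int\limits_0^\infty\frac{(f(r)-f(0))^2}{r^2}w_a(r)dr\cdot\int\limits_0^\infty\frac{e^{-2r}}{w_a(r)}dr .
\end{eqnarray*}
The last integral is a finite constant $M_{n,a}$, because $w_a>0$ is continuous and $w_a\sim\frac{3(n+1)a^2}{2r^2}$ at infinity by \eqref{small-order}, which is beaten by $e^{-2r}$. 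This gives the differential inequality
\begin{eqnarray*}
J'\ge c_{n,a}\,g\,J^2-gC_{n,a}\|\rho_0\|^2_{L^\infty},\qquad c_{n,a}>0 .
\end{eqnarray*}

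Now choose $A(n,a)>\sqrt{2C_{n,a}/c_{n,a}}$. If $J(0)\ge A\|\rho_0\|_{L^\infty}$, then $c_{n,a}J(0)^2\ge 2C_{n,a}\|\rho_0\|_{L^\infty}^2$. A continuity argument keeps $J$ increasing, so $J'\ge \frac{c_{n,a}g}{2}J^2$ for all $t$, and $J$ blows up in finite time. This contradicts the a priori bound $|J(t)|\le 2\|\rho_0\|_{L^\infty}\omega_{n-1}\int_0^\infty\min\{\|\nabla\rho\|_{L^\infty}/\|\rho_0\|_{L^\infty}\cdot 1,\ r^{-1}\}e^{-r}dr<\infty$, which is finite as long as $\nabla\rho$ stays bounded. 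Lemma \ref{local} then converts the contradiction into blow-up of the smooth solution.

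The step I expect to need most care is the justification of applying Proposition \ref{nonlinear-weighted-inequality-exponential} to $\rho(\cdot,t)$. That proposition is stated for Schwartz functions, whereas $\rho(t)$ is only known to lie in $H^s$. I would handle this either by approximation, or by noting that the proof only uses boundedness, radiality, $C^1$ regularity and decay of $\nabla\rho$, all of which persist by Lemma \ref{local}. A second delicate point is the case $A=0$ and the degenerate case $\rho_0\equiv$ const, which is excluded because the hypothesis forces $J(0)>0$ once $A>0$.
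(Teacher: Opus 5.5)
Your proposal is correct and follows essentially the same route as the paper: the same functional $\widetilde{J}(t)$, Proposition~\ref{nonlinear-weighted-inequality-exponential} combined with $\rho(0,t)=\rho_0(0)$ and the maximum principle, Cauchy--Schwarz against the weight $w_a$ with the finite constant $\int_0^\infty e^{-2r}w_a(r)^{-1}\,dr$, and the resulting Riccati-type inequality $\widetilde{J}'\ge c_1\widetilde{J}^2-c_2$. Your continuity argument with $A$ chosen strictly above the threshold replaces the paper's explicit ODE comparison and also covers the equality case $\widetilde{J}(0)=\sqrt{c_2/c_1}$ (a stationary point of the comparison ODE), which the paper's exact choice of $A(n,a)$ misses; note, however, that the hypothesis does not force $J(0)>0$ when $\rho_0\equiv 0$, since both sides then vanish, and this trivial exception is shared by the paper's statement.
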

\textbf{Proof.} We will argue by contradiction.
Assume that the solution $\rho$ to \eqref{M-KS} starting from the initial data $\rho_0\in \mathcal{S}(\mathbb{R}^n)$, the Schwartz class, satisfying \eqref{Class-initial-data}, exists for all time. For our purpose, we introduce a quantity $\widetilde{J}(t)$ given by
\begin{eqnarray}\label{weighted-integral-solution}
\widetilde{J}(t):=\int\limits_{\mathbb{R}^n}\frac{\rho(x,t)-\rho(0,t)}{|x|^{n}}e^{-|x|}dx.
\end{eqnarray}
By H\"{o}lder's inequality and the maximum principle  $\|\rho(t)\|_{L^\infty}\leq\|\rho_0\|_{L^\infty}$, we have that
\begin{eqnarray*}
\begin{split}
|\widetilde{J}(t)|
&\leq \|\nabla\rho(t)\|_{L^\infty}\int\limits_{|x|\leq1}\frac{dx}{|x|^{n-1}}
+2\|\rho_0\|_{L^\infty}\int\limits_{|x|>1}\frac{e^{-|x|}}{|x|^{n}}dx\\
&=\omega_{n-1}\|\nabla\rho\|_{L^\infty}+2\omega_{n-1}\|\rho_0\|_{L^\infty}\int\limits^\infty_1\frac{dr}{re^r}<+\infty,
\end{split}
\end{eqnarray*}
which shows that $\widetilde{J}(t)$ is finite for any $t>0$. Next we prove that $\widetilde{J}(t)$ will blow up at some finite time $T_0>0$ and then obtain a contradiction.

By \eqref{evolution-at-origin}, \eqref{M-KS}, Proposition \ref{nonlinear-weighted-inequality-exponential} and the maximum principle  $\|\rho(t)\|_{L^\infty}\leq\|\rho_0\|_{L^\infty}$ again, we can obtain that
\begin{eqnarray*}
\begin{split}
&\frac{d}{dt}\widetilde{J}(t)
=-g\int\limits_{\mathbb{R}^n} \frac{\mathcal{R}_a\rho(x,t)\cdot\nabla\rho(x,t)}{|x|^{n}}e^{-|x|}dx\\
&\geq g\frac{B(\frac12,\frac{n+1}{2})}{4\pi}\int\limits_{\mathbb{R}^n}\frac{(\rho(0,t)-\rho(x,t))^2}{|x|^{n+1}}\Big[1-\frac{|x|^{n+1}(|x|^2-na^2)}{(|x|^2+a^2)^{\frac{n+3}{2}}}\Big]dx
-gC_{n,a}\|\rho_0\|^2_{L^\infty}.
\end{split}
\end{eqnarray*}
By \eqref{weighted-integral-solution} and Cauchy-Schwarz inequality, we note that
\begin{eqnarray*}
\begin{split}
(\widetilde{J}(t))^2
&\leq\int\limits_{\mathbb{R}^n}\frac{(\rho(0,t)-\rho(x,t))^2}{|x|^{n+1}}\Big[1-\frac{|x|^{n+1}(|x|^2-na^2)}{(|x|^2+a^2)^{\frac{n+3}{2}}}\Big]dx
\int\limits_{\mathbb{R}^n}\frac{e^{-2|x|}}{|x|^{n-1}\Big(1-\frac{|x|^{n+1}(|x|^2-na^2)}{(|x|^2+a^2)^{\frac{n+3}{2}}}\Big)}dx\\
&=\omega_{n-1}c_{n,a}\int\limits_{\mathbb{R}^n}\frac{(\rho(0,t)-\rho(x,t))^2}{|x|^{n+1}}\Big[1-\frac{|x|^{n+1}(|x|^2-na^2)}{(|x|^2+a^2)^{\frac{n+3}{2}}}\Big]dx,
\end{split}
\end{eqnarray*}
where, by \eqref{small-order},
\begin{align*}
c_{n,a}
&:=\int\limits_0^\infty\frac{e^{-2r}}{1-\frac{r^{n+1}(r^2-na^2)}{(r^2+a^2)^{\frac{n+3}{2}}}}dr<\infty,
\end{align*}
This implies that
\begin{eqnarray*}
\int\limits_{\mathbb{R}^n}\frac{(\rho(0,t)-\rho(x,t))^2}{|x|^{n+1}}\Big[1-\frac{|x|^{n+1}(|x|^2-na^2)}{(|x|^2+a^2)^{\frac{n+3}{2}}}\Big]dx
\geq
\frac{(\widetilde{J}(t))^2}{\omega_{n-1}c_{n,a}}.
\end{eqnarray*}
Therefore, we derive that
\begin{eqnarray}\label{ordinary-differential-inequality}
\begin{split}
\frac{d}{dt}\widetilde{J}(t)
\geq c_1(\widetilde{J}(t))^2-c_2,~{\rm with}~c_1:=\frac{gB(\frac12,\frac{n+1}{2})}{4\pi\omega_{n-1}c_{n,a}},~c_2:=gC_{n,a}\|\rho_0\|^2_{L^\infty}.
\end{split}
\end{eqnarray}
To continue, we denote by $I(t)$ the solution of the following ordinary differential equation
\begin{align*}
\begin{cases}
\frac{d}{dt}I(t)=c_1(I(t))^2-c_2,\\
I(0)=\widetilde{J}(0),
\end{cases}
\end{align*}
which, was explicitly computed in \cite{[Jarrin-Vegara-Hermosilla]} as
\begin{align*}
I(t)=\sqrt{\frac{c_2}{c_1}}\frac{\Big(\widetilde{J}(0)+\sqrt{\frac{c_2}{c_1}}\Big)+\Big(\widetilde{J}(0)-\sqrt{\frac{c_2}{c_1}}\Big)e^{2\sqrt{c_1c_2}t}}{\Big(\widetilde{J}(0)+\sqrt{\frac{c_2}{c_1}}\Big)-\Big(\widetilde{J}(0)-\sqrt{\frac{c_2}{c_1}}\Big)e^{2\sqrt{c_1c_2}t}}.
\end{align*}
Note that $I(t)$ blows up at the finite time
\begin{eqnarray*}
T_{0}:=\frac{1}{2\sqrt{c_1c_2}}\log\frac{\sqrt{c_1}\widetilde{J}(0)+\sqrt{c_2}}{\sqrt{c_1}\widetilde{J}(0)-\sqrt{c_2}}>0,
\end{eqnarray*}
provided
\begin{eqnarray*}
\widetilde{J}(0)
=\int\limits_{\mathbb{R}^n}\frac{\rho_0(x)-\rho_0(0)}{|x|^n}e^{-|x|}dx
>\sqrt{\frac{c_2}{c_1}}=2\sqrt{\frac{\pi\omega_{n-1}c_{n,a}C_{n,a}}{B(\frac12,\frac{n+1}{2})}}\|\rho_0\|_{L^\infty}.
\end{eqnarray*}
Finally, by choosing $A(n,a)=2\sqrt{\frac{\pi\omega_{n-1}c_{n,a}C_{n,a}}{B(\frac12,\frac{n+1}{2})}}$, \eqref{Class-initial-data}, \eqref{ordinary-differential-inequality} and the comparison principle of ordinary differential equation, it follows that $J(t)\geq I(t)$, and the functional $J(t)$ also blows up at the finite time $T_{0}>0$, leading to the desired contradiction.
The proof of Theorem \ref{the-1} is then completed. \hfill\hfill$\square$\vskip12pt
\section{Appendix}
\label{appendix}
In this appendix, we will give the proofs of some lemmas and facts previously used, including Lemma \ref{Taylor-series-coefficients}, \eqref{singularity-order} and Lemma \ref{Bernstein-Lemma}.
\subsection{Proof of Lemma \ref{Taylor-series-coefficients}}
In the spirit of the proof of Lemma 3.2 in \cite{[Dong]}, we will give the proof by the induction arguments on the integer $k\geq1$. \vskip12pt
\textbf{Proof.}
Direct differentiation yields that
\begin{eqnarray*}
\frac{\partial g_\mu}{\partial \lambda}(\lambda,m,\beta)
=-(m+2\beta)\int\limits_0^\pi\frac{\sin^m\theta(\lambda-\cos\theta)}{(1-2\lambda\cos\theta+\lambda^2+\mu^2)^{\frac{m}{2}+\beta+1}}d\theta
\end{eqnarray*}
and
\begin{align*}
&\frac{1}{m+2\beta}\frac{\partial^2 g_\mu}{\partial \lambda^2}(\lambda,m,\beta)
=-\int\limits_0^\pi\frac{\sin^m\theta }{(1-2\lambda\cos\theta+\lambda^2+\mu^2)^{\frac{m}{2}+\beta+1}}d\theta\\
&\ \ \ \ \ \ \ \ \ \ \ \ \ \ \ \ \ \ \ \ \ \ \ \ \ \ \ \ \ \ \
+(m+2\beta+2)\int\limits_0^\pi\frac{\sin^m\theta(\lambda-\cos\theta)^2 }{(1-2\lambda\cos\theta+\lambda^2+\mu^2)^{\frac{m}{2}+\beta+2}}d\theta,
\end{align*}
which along with the identity
\begin{eqnarray*}
(\lambda-\cos\theta)^2 =(1-2\lambda\cos\theta+\lambda^2+\mu^2)-\sin^2\theta-\mu^2,
\end{eqnarray*}
yields that
\begin{eqnarray}\label{second-order}
\begin{split}
&\frac{1}{m+2\beta}\frac{\partial^2 g_\mu}{\partial \lambda^2}(\lambda,m,\beta)
=(m+2\beta+1)g_\mu(\lambda,m,\beta+1)\\
& \ \ \
 -(m+2\beta+2)g_\mu(\lambda,m+2,\beta+1)-(m+2\beta+2)\mu^2g_\mu(\lambda,m,\beta+2).
\end{split}
\end{eqnarray}
Note that
\begin{eqnarray*}
g_\mu(0,m,\beta)
=\frac{B\Big(\frac12,\frac{m+1}{2}\Big)}{(1+\mu^2)^{\frac{m}{2}+\beta}}>0.
\end{eqnarray*}
It follows from \eqref{second-order} and the identity $B\Big(\frac12,\frac{m+3}{2}\Big)=\frac{m+1}{m+2}B\Big(\frac12,\frac{m+1}{2}\Big)$ that
\begin{align*}
\frac{\partial^{2} g_\mu}{\partial \lambda^{2}}(0,m,\beta)
&=(m+2\beta)(m+2\beta+1)\frac{B(\frac12,\frac{m+1}{2})}{(1+\mu^2)^{\frac{m}{2}+\beta+1}}
-(m+2\beta)(m+2\beta+2)\frac{B(\frac12,\frac{m+3}{2})}{(1+\mu^2)^{\frac{m}{2}+\beta+2}}\\
&\ \ \ \ \
-(m+2\beta)(m+2\beta+2)\mu^2\frac{B(\frac12,\frac{m+1}{2})}{(1+\mu^2)^{\frac{m}{2}+\beta+2}}\\
&=\frac{B(\frac12,\frac{m+1}{2})}{(1+\mu^2)^{\frac{m}{2}+\beta+2}}\frac{m+2\beta}{m+2}\times
\Big[(m+2\beta+1)(m+2)(1+\mu^2)\\
&\ \ \ \ \ \ \ \ \ \ \ \ \ \ \ \ \ \ \ \ \ \ \ \ \ \ \ \ \ \ \ \ \ \ \ \ \ \ \ \ \ \ \ \ \
-(m+2\beta+2)(m+1)-(m+2\beta+2)(m+2)\mu^2\Big]\\
&=B\Big(\frac12,\frac{m+1}{2}\Big)2\beta\frac{m+2\beta}{m+2}\cdot\frac{1-\frac{m+2}{2\beta}\mu^2}{(1+\mu^2)^{\frac{m}{2}+\beta+2}},
\end{align*}
which shows that \eqref{claim} holds for $k=1$. Now we assume that \eqref{claim} holds for some integer $k\geq1$, for any $m\geq2$ and $\beta>0$.
By \eqref{second-order}, we have that
\begin{eqnarray}\label{recursive-formula}
\begin{split}
&\frac{1}{m+2\beta}\frac{\partial^{2k+2}g_\mu }{\partial \lambda^{2k+2}}(0,m,\beta)=(m+2\beta+1)\frac{\partial^{2k}g_\mu }{\partial \lambda^{2k}}(0,m,\beta+1)\\
&\ \ \
 -(m+2\beta+2)\frac{\partial^{2k}g_\mu }{\partial \lambda^{2k}}(0,m+2,\beta+1)-(m+2\beta+2)\mu^2\frac{\partial^{2k}g_\mu}{\partial \lambda^{2k}}(0,m,\beta+2).
\end{split}
\end{eqnarray}
Then, by \eqref{recursive-formula} and \eqref{claim}, we derive that
\begin{align*}
&\frac{\partial^{2(k+1)}g_\mu }{\partial \lambda^{2(k+1)}}(0,m,\beta)\\
&=(m+2\beta)(m+2\beta+1)B\Big(\frac12,\frac{m+1}{2}\Big)(2k-1)!!
\displaystyle\prod^{k-1}_{j=0}\frac{(2\beta+2+2j)(m+2\beta+2+2j)}{m+2j+2}\\
&\ \ \ \ \ \ \ \ \ \ \ \  \ \ \ \ \ \ \ \ \ \
\times
\frac{1+\displaystyle\sum^{k}_{j=1}(-1)^j\binom{k}{j}\Big(\displaystyle\prod^{j-1}_{l=0}\frac{m+2(k-l)}{2\beta+2+2l}\Big)\mu^{2j}}{(1+\mu^2)^{\frac{m}{2}+\beta+2k+1}}\\
&\ \ \
 -(m+2\beta)(m+2\beta+2)\frac{m+1}{m+2}B\Big(\frac12,\frac{m+1}{2}\Big)(2k-1)!!\displaystyle\prod^{k-1}_{j=0}\frac{(2\beta+2+2j)(m+2\beta+4+2j)}{m+2j+4}\\
&\ \ \ \ \ \ \ \ \ \ \ \  \ \ \ \ \ \ \ \ \ \
\times
\frac{1+\displaystyle\sum^{k}_{j=1}(-1)^j\binom{k}{j}\Big(\displaystyle\prod^{j-1}_{l=0}\frac{m+2+2(k-l)}{2\beta+2+2l}\Big)\mu^{2j}}{(1+\mu^2)^{\frac{m}{2}+\beta+2k+2}}\\
&\ \ \
 -\mu^2(m+2\beta)(m+2\beta+2)B\Big(\frac12,\frac{m+1}{2}\Big)(2k-1)!!\displaystyle\prod^{k-1}_{j=0}\frac{(2\beta+4+2j)(m+2\beta+4+2j)}{m+2j+2}\\
&\ \ \ \ \ \ \ \ \ \ \ \  \ \ \ \ \ \ \ \ \ \
\times
\frac{1+\displaystyle\sum^{k}_{j=1}(-1)^j\binom{k}{j}\Big(\displaystyle\prod^{j-1}_{l=0}\frac{m+2(k-l)}{2\beta+4+2l}\Big)\mu^{2j}}{(1+\mu^2)^{\frac{m}{2}+\beta+2k+2}}\\
&=B\Big(\frac12,\frac{m+1}{2}\Big)(2k+1)!!\Big(\displaystyle\prod^{k}_{j=0}\frac{(2\beta+2j)(m+2\beta+2j)}{m+2j+2}\Big)(1+\mu^2)^{-(\frac{m}{2}+\beta+2k+2)}\\
&\
\times\Big\{
\frac{(m+2\beta+1)(m+2k+2)}{2\beta(2k+1)}
\cdot(1+\mu^2)\Big[1+\sum^{k}_{j=1}(-1)^j\binom{k}{j}\Big(\displaystyle\prod^{j-1}_{l=0}\frac{m+2(k-l)}{2\beta+2+2l}\Big)\mu^{2j}\Big]\\
&\ \ \
 -\frac{(m+1)(m+2\beta+2k+2)}{2\beta(2k+1)}
\Big[1+\sum^{k}_{j=1}(-1)^j\binom{k}{j}\Big(\displaystyle\prod^{j-1}_{l=0}\frac{m+2+2(k-l)}{2\beta+2+2l}\Big)\mu^{2j}\Big]\\
&\
 -\frac{(m+2k+2)(2\beta+2k+2)(m+2\beta+2k+2)}{2\beta(2\beta+2)(2k+1)}
\mu^2\Big[1+\sum^{k}_{j=1}(-1)^j\binom{k}{j}\Big(\displaystyle\prod^{j-1}_{l=0}\frac{m+2(k-l)}{2\beta+4+2l}\Big)\mu^{2j}\Big]
\Big\}\\
&=B\Big(\frac12,\frac{m+1}{2}\Big)(2k+1)!!\Big(\displaystyle\prod^{k}_{j=0}\frac{(2\beta+2j)(m+2\beta+2j)}{m+2j+2}\Big)(1+\mu^2)^{-(\frac{m}{2}+\beta+2k+2)}\\
&\
\times\Big\{
\frac{(m+2\beta+1)(m+2k+2)}{2\beta(2k+1)}+\frac{(m+2\beta+1)(m+2k+2)}{2\beta(2k+1)}\mu^2\\
&\ \ \ \ \ \ \ \ \ \ \
+\frac{(m+2\beta+1)(m+2k+2)}{2\beta(2k+1)}\sum^{k}_{j=1}(-1)^j\binom{k}{j}\Big(\displaystyle\prod^{j-1}_{l=0}\frac{m+2(k-l)}{2\beta+2+2l}\Big)\mu^{2j}\\
&\ \ \ \ \ \ \ \ \ \ \
+\frac{(m+2\beta+1)(m+2k+2)}{2\beta(2k+1)}
\sum^{k}_{j=1}(-1)^j\binom{k}{j}\Big(\displaystyle\prod^{j-1}_{l=0}\frac{m+2(k-l)}{2\beta+2+2l}\Big)\mu^{2j+2}\\
&\
-\frac{(m+1)(m+2\beta+2k+2)}{2\beta(2k+1)} -\frac{(m+2k+2)(2\beta+2k+2)(m+2\beta+2k+2)}{2\beta(2\beta+2)(2k+1)}\mu^2\\
&\
-\frac{(m+1)(m+2\beta+2k+2)}{2\beta(2k+1)}
\sum^{k}_{j=1}(-1)^j\binom{k}{j}\Big(\displaystyle\prod^{j-1}_{l=0}\frac{m+2+2(k-l)}{2\beta+2+2l}\Big)\mu^{2j}\\
&\
-\frac{(m+2k+2)(2\beta+2k+2)(m+2\beta+2k+2)}{2\beta(2\beta+2)(2k+1)}
\sum^{k}_{j=1}(-1)^j\binom{k}{j}\Big(\displaystyle\prod^{j-1}_{l=0}\frac{m+2(k-l)}{2\beta+4+2l}\Big)\mu^{2j+2}
\Big\}\\
&=B\Big(\frac12,\frac{m+1}{2}\Big)(2k+1)!!\Big(\displaystyle\prod^{k}_{j=0}\frac{(2\beta+2j)(m+2\beta+2j)}{m+2j+2}\Big)(1+\mu^2)^{-(\frac{m}{2}+\beta+2k+2)}\\
&\ \ \
\times\Big\{\frac{(m+2\beta+1)(m+2k+2)}{2\beta(2k+1)}+\frac{(m+2\beta+1)(m+2k+2)}{2\beta(2k+1)}\mu^2\\
&\ \ \ \ \ \ \ \ \ \ \
+\frac{(m+2\beta+1)(m+2k+2)}{2\beta(2k+1)}\sum^{k}_{j=1}(-1)^j\binom{k}{j}\Big(\displaystyle\prod^{j-1}_{l=0}\frac{m+2(k-l)}{2\beta+2+2l}\Big)\mu^{2j}\\
&\ \ \ \ \ \ \ \ \ \ \
+\frac{(m+2\beta+1)(m+2k+2)}{2\beta(2k+1)}
\sum^{k+1}_{j=2}(-1)^{j-1}\binom{k}{j-1}\Big(\displaystyle\prod^{j-2}_{l=0}\frac{m+2(k-l)}{2\beta+2+2l}\Big)\mu^{2j}\\
&\ \ \ \ \ \ \ \
-\frac{(m+1)(m+2\beta+2k+2)}{2\beta(2k+1)}-\frac{(m+2k+2)(2\beta+2k+2)(m+2\beta+2k+2)}{2\beta(2\beta+2)(2k+1)}\mu^2\\
&\ \ \ \ \ \ \ \
-\frac{(m+1)(m+2\beta+2k+2)}{2\beta(2k+1)}
\sum^{k}_{j=1}(-1)^j\binom{k}{j}\Big(\displaystyle\prod^{j-1}_{l=0}\frac{m+2+2(k-l)}{2\beta+2+2l}\Big)\mu^{2j}\\
&\
-\frac{(m+2k+2)(2\beta+2k+2)(m+2\beta+2k+2)}{2\beta(2\beta+2)(2k+1)}
\sum^{k+1}_{j=2}(-1)^{j-1}\binom{k}{j-1}\Big(\displaystyle\prod^{j-2}_{l=0}\frac{m+2(k-l)}{2\beta+4+2l}\Big)\mu^{2j}\Big\}\\
&=B\Big(\frac12,\frac{m+1}{2}\Big)(2k+1)!!\Big(\displaystyle\prod^{k}_{j=0}\frac{(2\beta+2j)(m+2\beta+2j)}{m+2j+2}\Big)\frac{\displaystyle\sum^{k+1}_{j=0}\mathcal{I}_j\mu^{2j}}{(1+\mu^2)^{\frac{m}{2}+\beta+2k+2}},
\end{align*}
where
\begin{align*}
 \mathcal{I}_0:=\frac{(m+2\beta+1)(m+2k+2)}{2\beta(2k+1)}-\frac{(m+1)(m+2\beta+2k+2)}{2\beta(2k+1)},
\end{align*}
\begin{align*}
 \mathcal{I}_1&:=\frac{(m+2\beta+1)(m+2k+2)}{2\beta(2k+1)}-\frac{(m+2\beta+1)(m+2k+2)}{2\beta(2k+1)}\binom{k}{1}\frac{m+2k}{2\beta+2}\\
&\
+\frac{(m+1)(m+2\beta+2k+2)}{2\beta(2k+1)}\binom{k}{1}\frac{m+2+2k}{2\beta+2}
 -\frac{(m+2k+2)(2\beta+2k+2)(m+2\beta+2k+2)}{2\beta(2\beta+2)(2k+1)},
\end{align*}
for any $2\leq j\leq k$,
\begin{align*}
\mathcal{I}_j&:=\frac{(m+2\beta+1)(m+2k+2)}{2\beta(2k+1)}(-1)^j\binom{k}{j}
\displaystyle\prod^{j-1}_{l=0}\frac{m+2(k-l)}{2\beta+2+2l}\\
&\
+\frac{(m+2\beta+1)(m+2k+2)}{2\beta(2k+1)}
(-1)^{j-1}\binom{k}{j-1}
\displaystyle\prod^{j-2}_{l=0}\frac{m+2(k-l)}{2\beta+2+2l}\\
&\
-\frac{(m+1)(m+2\beta+2k+2)}{2\beta(2k+1)}(-1)^j\binom{k}{j}
\displaystyle\prod^{j-1}_{l=0}\frac{m+2+2(k-l)}{2\beta+2+2l}\\
&\
-\frac{(m+2k+2)(2\beta+2k+2)(m+2\beta+2k+2)}{2\beta(2\beta+2)(2k+1)}(-1)^{j-1}\binom{k}{j-1}
\displaystyle\prod^{j-2}_{l=0}\frac{m+2(k-l)}{2\beta+4+2l}
\end{align*}
and
\begin{align*}
\mathcal{I}_{k+1}&:=\frac{(m+2\beta+1)(m+2k+2)}{2\beta(2k+1)}(-1)^{k}\displaystyle\prod^{k-1}_{l=0}\frac{m+2(k-l)}{2\beta+2+2l}\\
&\ \ \
-\frac{(m+2k+2)(2\beta+2k+2)(m+2\beta+2k+2)}{2\beta(2\beta+2)(2k+1)}(-1)^{k}\displaystyle\prod^{k-1}_{l=0}\frac{m+2(k-l)}{2\beta+4+2l}.
\end{align*}
By straightforward computations, we have that
\begin{align*}
\mathcal{I}_0
&=\frac{(m+2\beta+1)((m+1)+(2k+1))-(m+1)((m+2\beta+1)+(2k+1))}{2\beta(2k+1)}\\
&=\frac{(m+2\beta+1)(2k+1)-(m+1)(2k+1)}{2\beta(2k+1)}=1,
\end{align*}
\begin{align*}
\mathcal{I}_1&=\frac{m+2k+2}{2\beta(2\beta+2)(2k+1)}
\Big[(m+2\beta+1)(2\beta+2)-(2\beta+2k+2)(m+2\beta+2k+2)\\
&\ \ \ \ \ \ \ \ \ \ \ \ \ \ \ \ \ \ \ \ \ \ \ \ \ \ \ \ \ \ \ \ \ \ \ \
+k(m+1)(m+2\beta+2k+2)-k(m+2k)(m+2\beta+1)\Big]\\
&=\frac{m+2k+2}{2\beta(2\beta+2)(2k+1)}\Big[-(2\beta+2)(2k+1)-2k(m+2\beta+1)-2k(2k+1)\\
&\ \ \ \ \ \ \ \ \ \ \ \ \ \ \ \ \ \ \ \ \ \ \ \ \ \ \ \ \ \ \ \ \ \ \ \
+k(m+1)(2k+1)-k(2k-1)(m+2\beta+1)\Big]\\
&=-\frac{m+2k+2}{2\beta(2\beta+2)(2k+1)}\cdot(k+1)(2\beta+2)(2k+1)\\
&=-\binom{k+1}{1}\frac{m+2k+2}{2\beta}
\end{align*}
and
\begin{align*}
\mathcal{I}_{k+1}
&=(-1)^{k}\frac{m+2k+2}{2\beta(2k+1)}\displaystyle\prod^{k}_{l=0}\frac{m+2(k+1-l)}{2\beta+2l}
\Big[(m+2\beta+1)\cdot\frac{2\beta}{m+2k+2}\\
&\ \ \ \ \ \ \ \ \
-\frac{(2\beta+2k+2)(m+2\beta+2k+2)}{2\beta+2}\cdot\frac{2\beta(2\beta+2)}{(2\beta+2k+2)(m+2k+2)} \Big]\\
&=\frac{(-1)^{k}}{2k+1}\displaystyle\prod^{k}_{l=0}\frac{m+2(k+1-l)}{2\beta+2l}
\Big[(m+2\beta+1)-(m+2\beta+2k+2) \Big]\\
&=(-1)^{k+1}\displaystyle\prod^{k}_{l=0}\frac{m+2(k+1-l)}{2\beta+2l}.
\end{align*}
Moreover, for any integer $2\leq j\leq k$,
\begin{align*}
\mathcal{I}_j
&=(-1)^j\displaystyle\prod^{j-1}_{l=0}\frac{m+2(k+1-l)}{2\beta+2l}
\Big[\frac{(m+2\beta+1)(m+2k+2)}{2\beta(2k+1)}\cdot\frac{2\beta(m+2k-2j+2)}{(2\beta+2j)(m+2k+2)}\binom{k}{j}\\
&\
-\frac{(m+2\beta+1)(m+2k+2)}{2\beta(2k+1)}\cdot\frac{2\beta}{m+2k+2}\binom{k}{j-1}
-\frac{(m+1)(m+2\beta+2k+2)}{2\beta(2k+1)}\cdot\frac{2\beta}{2\beta+2j}\binom{k}{j}\\
&\ \ \ \ \ \
+\frac{(m+2k+2)(2\beta+2k+2)(m+2\beta+2k+2)}{2\beta(2\beta+2)(2k+1)}
\cdot\frac{2\beta(2\beta+2)}{(2\beta+2j)(m+2k+2)}\binom{k}{j-1}\Big]\\
&=(-1)^j\displaystyle\prod^{j-1}_{l=0}\frac{m+2(k+1-l)}{2\beta+2l}
\Big[\frac{(m+2\beta+1)(m+2k-2j+2)}{(2k+1)(2\beta+2j)}\binom{k}{j}
-\frac{m+2\beta+1}{2k+1}\binom{k}{j-1}\\
&\ \ \ \ \ \ \ \
-\frac{(m+1)(m+2\beta+2k+2)}{(2k+1)(2\beta+2j)}\binom{k}{j}
+\frac{(2\beta+2k+2)(m+2\beta+2k+2)}{(2k+1)(2\beta+2j)}
\binom{k}{j-1}\Big]\\
&=(-1)^j\binom{k+1}{j}\displaystyle\prod^{j-1}_{l=0}\frac{m+2(k+1-l)}{2\beta+2l}\\
&\ \ \ \ \ \
\times
\Big\{\Big[\frac{(m+2\beta+1)(m+2k-2j+2)}{(2k+1)(2\beta+2j)}
-\frac{(m+1)(m+2\beta+2k+2)}{(2k+1)(2\beta+2j)}\Big]\Big(1-\frac{j}{k+1}\Big)\\
&\ \ \ \ \ \ \ \
+\Big[\frac{(2\beta+2k+2)(m+2\beta+2k+2)}{(2k+1)(2\beta+2j)}-\frac{m+2\beta+1}{2k+1}
\Big]\frac{j}{k+1}\Big\}\\
&=(-1)^j\binom{k+1}{j}\displaystyle\prod^{j-1}_{l=0}\frac{m+2(k+1-l)}{2\beta+2l}
\Big\{\frac{(1-\frac{j}{k+1})(2\beta(2k+1)-2(m+2\beta+1)j)}{(2k+1)(2\beta+2j)}\\
&\ \ \ \ \ \ \ \
+\frac{2(m+2\beta+2k+2)j+\frac{j}{k+1}(2\beta(2k+1)-2(m+2\beta+1)j)}{(2k+1)(2\beta+2j)}\Big\}\\
&=(-1)^j\binom{k+1}{j}\displaystyle\prod^{j-1}_{l=0}\frac{m+2(k+1-l)}{2\beta+2l}.
\end{align*}
The proof of Lemma \ref{Taylor-series-coefficients} is then complete.   \hfill\hfill$\square$\vskip12pt
\subsection{Logarithmic asymptotics of the singular integral $G(\lambda)$}
In this subsection, we prove the two limits in \eqref{singularity-order}.

\textbf{Proof of \eqref{singularity-order}.} It is sufficient to consider $\lambda>0$. The substitution $t=\sin\frac{\theta}{2}$, together with
\begin{align*}
1-2\lambda\cos\theta+\lambda^2=(\lambda-1)^2+4\lambda\sin^2\frac{\theta}{2},
\end{align*}
gives that
\begin{align}\label{G-representation}
G(\lambda)
=\int\limits^1_0\frac{t^n(1-t^2)^{\frac{n-1}{2}}}{\Big(t^2+\frac{(\lambda-1)^2}{4\lambda}\Big)^{\frac{n+1}{2}}}dt
=\lambda^{-\frac{n+1}{2}}H\Big(\frac{|\lambda-1|}{2\sqrt{\lambda}}\Big),
\end{align}
where the function $H(\cdot)$ is given by
\begin{align*}
H(s):=\int\limits^1_0\frac{t^n(1-t^2)^{\frac{n-1}{2}}}{(t^2+s^2)^{\frac{n+1}{2}}}dt.
\end{align*}
For every $s>0$, the denominator of the integrand in the definition of $H(\cdot)$ is locally bounded away from zero.  Thus, by differentiation under the integral sign and a change of variables, we obtain that
\begin{align*}
sH'(s)
&=-(n+1)s^2\int\limits^1_0\frac{t^n(1-t^2)^{\frac{n-1}{2}}}{(t^2+s^2)^{\frac{n+3}{2}}}dt=-(n+1)\int\limits^{\frac{1}{s}}_0\frac{x^n(1-s^2x^2)^{\frac{n-1}{2}}}{(1+x^2)^{\frac{n+3}{2}}}dx\\
&=-(n+1)\int\limits^\infty_0\frac{x^n(1-s^2x^2)^{\frac{n-1}{2}}}{(1+x^2)^{\frac{n+3}{2}}}\chi_{(0,\frac{1}{s})}(x) dx,
\end{align*}
where the indicator function $\chi$ is defined by
\begin{eqnarray*}
\chi_{(0,\frac{1}{s})}(x)
=
\begin{cases}
1,
&  \mbox{if $x\in(0,\frac{1}{s}),$ }\\
0,
& \mbox{if $x\in[\frac{1}{s},\infty).$ } \\
\end{cases}
\end{eqnarray*}
As $s\downarrow0$, the last integrand converges pointwise to $\frac{x^n}{(x^2+1)^{\frac{n+3}{2}}}$ and is bounded by the same integrable function. Then the Dominated Convergence Theorem yields that
\begin{align}\label{H-derivative-order}
\lim_{s\downarrow0}sH'(s)
=-(n+1)\int\limits^\infty_0\frac{x^n}{(1+x^2)^{\frac{n+3}{2}}}dx
=-\frac{x^{n+1}}{(1+x^2)^{\frac{n+1}{2}}}\Big|^\infty_0=1.
\end{align}
Consequently, by l'H\^{o}pital's rule, we obtain that
\begin{align*}
\lim_{s\downarrow0}\frac{H(s)}{\ln s}=\lim_{s\downarrow0}sH'(s)=-1,
\end{align*}
which implies that
\begin{align*}
\lim_{\lambda\rightarrow1}\frac{G(\lambda)}{\ln|\lambda-1|}
=\lim_{\lambda\rightarrow1}\lambda^{-\frac{n+1}{2}}
\cdot\frac{H\Big(\frac{|\lambda-1|}{2\sqrt{\lambda}}\Big)}{\ln\Big(\frac{|\lambda-1|}{2\sqrt{\lambda}}\Big)}
\cdot\Big(1-\frac{\ln2+\frac12\ln\lambda}{\ln|\lambda-1|}\Big)
=-1.
\end{align*}
Differentiating \eqref{G-representation}, we can obtain that
\begin{align*}
G'(\lambda)
&=-\frac{n+1}{2}\lambda^{-\frac{n+3}{2}}H\Big(\frac{|\lambda-1|}{2\sqrt{\lambda}}\Big)
+\lambda^{-\frac{n+1}{2}}H'\Big(\frac{|\lambda-1|}{2\sqrt{\lambda}}\Big)\frac{d}{d\lambda}\Big(\frac{|\lambda-1|}{2\sqrt{\lambda}}\Big)\\
&=-\frac{n+1}{2}\lambda^{-\frac{n+3}{2}}H\Big(\frac{|\lambda-1|}{2\sqrt{\lambda}}\Big)
+\lambda^{-\frac{n+1}{2}}H'\Big(\frac{|\lambda-1|}{2\sqrt{\lambda}}\Big)\cdot\frac{|\lambda-1|}{2\sqrt{\lambda}}\Big(\frac{1}{\lambda-1}-\frac{1}{2\lambda}\Big)\\
&=\lambda^{-\frac{n+1}{2}}\Big[-\frac{n+1}{2}\lambda^{-1}H\Big(\frac{|\lambda-1|}{2\sqrt{\lambda}}\Big)
+\frac{|\lambda-1|}{2\sqrt{\lambda}}H'\Big(\frac{|\lambda-1|}{2\sqrt{\lambda}}\Big)\frac{\lambda+1}{2\lambda(\lambda-1)}\Big],
\end{align*}
which together with \eqref{H-derivative-order} implies that
\begin{align*}
\lim_{\lambda\rightarrow1}(\lambda-1)G'(\lambda)
&=\lim_{\lambda\rightarrow1}\lambda^{-\frac{n+1}{2}}\Big[-\frac{n+1}{2}\frac{(\lambda-1)}{\lambda}H\Big(\frac{|\lambda-1|}{2\sqrt{\lambda}}\Big)
+\frac{|\lambda-1|}{2\sqrt{\lambda}}H'\Big(\frac{|\lambda-1|}{2\sqrt{\lambda}}\Big)\frac{\lambda+1}{2\lambda}\Big]\\
&=-\frac{n+1}{2}\lim_{\lambda\rightarrow1}(\lambda-1)H\Big(\frac{|\lambda-1|}{2\sqrt{\lambda}}\Big)
+\lim_{\lambda\rightarrow1}\frac{|\lambda-1|}{2\sqrt{\lambda}}H'\Big(\frac{|\lambda-1|}{2\sqrt{\lambda}}\Big)\\
&=\frac{n+1}{2}\lim_{\lambda\rightarrow1}(\lambda-1)\ln\frac{|\lambda-1|}{2\sqrt{\lambda}}
+(-1)\\
&=-1.
\end{align*}
We then finish the proof of \eqref{singularity-order}.   \hfill\hfill$\square$\vskip12pt
\subsection{Proof of Lemma \ref{Bernstein-Lemma}}
\textbf{Proof.} Denote $K:=k+\frac{\alpha+\beta+1}{2}$ and make a change of variables
\begin{align*}
x:=\cos\theta,~\theta\in(0,\pi).
\end{align*}
Note that
\begin{align*}
1-x=2\sin^2\frac{\theta}{2},~~~~~1+x=2\cos^2\frac{\theta}{2},
\end{align*}
it is enough to show that, for every integer $k\geq1$ and any $\theta\in(0,\pi)$,
\begin{align}\label{equivalent-form}
\Big(\sin\frac{\theta}{2}\Big)^{\alpha+\frac12}\Big(\cos\frac{\theta}{2}\Big)^{\beta+\frac12}|P^{(\alpha,\beta)}_k(\cos\theta)|\leq \frac{C_{\alpha,\beta}}{\sqrt{k}}.
\end{align}
We first consider the case $\theta\in(0,\frac{\pi}{2}]$. According to the classical Hilb formula for the Jacobi polynomials (see Theorem 8.21.12 in \cite{[Szego]}), we have that
\begin{align*}
\Big(\sin\frac{\theta}{2}\Big)^{\alpha}\Big(\cos\frac{\theta}{2}\Big)^{\beta}|P^{(\alpha,\beta)}_k(\cos\theta)| =\frac{\Gamma(k+\alpha+1)}{K^\alpha k!}\Big(\frac{\theta}{\sin\theta}\Big)^{\frac12}J_\alpha(K\theta)+R_k(\theta),
\end{align*}
where $J_\alpha$ is the Bessel function of the first kind and uniformly for $\theta\in(0,\frac{\pi}{2}]$,
\begin{align}\label{remainder-estimate}
|R_k(\theta)|\leq C_{\alpha,\beta}\theta^{\frac12}k^{-\frac{3}{2}}.
\end{align}
It follows that
\begin{align}\label{Hilb-estimate}
\Big(\sin\frac{\theta}{2}\Big)^{\alpha+\frac12}\Big(\cos\frac{\theta}{2}\Big)^{\beta+\frac12}|P^{(\alpha,\beta)}_k(\cos\theta)| =\frac{\Gamma(k+\alpha+1)}{\sqrt{2}K^\alpha k!}\theta^{\frac12}J_\alpha(K\theta)+\Big(\frac12\sin\theta\Big)^{\frac12}R_k(\theta).
\end{align}
By the standard Gamma-ratio estimate, we have
\begin{align*}
\lim_{k\rightarrow\infty}\frac{\Gamma(k+\alpha+1)}{k^\alpha k!}=1,
\end{align*}
which shows that
that, for any integer $k\geq1$,
\begin{align}\label{constant-estimate}
\frac{\Gamma(k+\alpha+1)}{\sqrt{2}K^\alpha k!}\leq C_{\alpha,\beta}.
\end{align}
For fixed $\alpha\geq0$, the Bessel function satisfies
\begin{eqnarray}\label{Bessel-estimate}
|J_\alpha(t)|
\leq C_\alpha
\begin{cases}
t^\alpha,
&  \mbox{ $0\leq t<1,$ }\\
t^{-\frac12},
& \mbox{ $t\geq1.$ } \\
\end{cases}
\end{eqnarray}
For $0<\theta<\frac{1}{K}$, that is, $0<K\theta<1$. By the first part of \eqref{Bessel-estimate}, we have that
\begin{align*}
|\theta^{\frac12}J_\alpha(K\theta)|
\leq C_\alpha\theta^{\frac12}(K\theta)^\alpha=C_\alpha K^\alpha\theta^{\alpha+\frac12}
\leq C_\alpha K^\alpha(K^{-1})^{\alpha+\frac12}=\frac{C_\alpha}{\sqrt{K}}
\leq \frac{C_\alpha}{\sqrt{k}}.
\end{align*}
For $\frac{1}{K}\leq\theta\leq\frac{\pi}{2}$, here $K\theta\geq1$. By the second part of \eqref{Bessel-estimate}, we derive that
\begin{align*}
|\theta^{\frac12}J_\alpha(K\theta)|
&\leq C_\alpha\theta^{\frac12}(K\theta)^{-\frac12}
=\frac{C_\alpha}{\sqrt{K}}\leq \frac{C_\alpha}{\sqrt{k}}.
\end{align*}
Thus, we obtain that, for any $\theta\in(0,\frac{\pi}{2}]$,
\begin{align}\label{leading-term-estimate}
|\theta^{\frac12}J_\alpha(K\theta)|
\leq  \frac{C_\alpha}{\sqrt{k}}.
\end{align}
For any $\theta\in(0,\frac{\pi}{2}]$, by virtue of \eqref{remainder-estimate}, we derive that
\begin{align*}
\Big|\Big(\frac12\sin\theta\Big)^{\frac12}R_k(\theta)\Big|
\leq C_{\alpha,\beta}\theta^{\frac12}k^{-\frac{3}{2}}
\leq\frac{C_{\alpha,\beta}}{\sqrt{k}},
\end{align*}
which along with \eqref{Hilb-estimate}, \eqref{constant-estimate} and \eqref{leading-term-estimate} gives \eqref{equivalent-form} with $\theta\in(0,\frac{\pi}{2}]$. The Hilb expansion is ordinarily stated for all sufficiently large $k$. The finitely many remaining values of $k$ cause no difficulty: for each such $k$, the function on the left side of \eqref{equivalent-form} extends continuously to $[0,\frac{\pi}{2}]$, and its supremum can be absorbed into the constant $C_{\alpha,\beta}$.

It remains to treat the case $\frac{\pi}{2}\leq\theta<\pi$. Put a change of variables $\varphi:=\pi-\theta\in(0,\frac{\pi}{2}]$. The symmetry identity for Jacobi polynomials
\begin{align*}
P^{(\alpha,\beta)}_k(-t)=(-1)^kP^{(\beta,\alpha)}_k(t)
\end{align*}
and the relations
\begin{align*}
\sin\frac{\theta}{2}=\cos\frac{\varphi}{2},~\cos\frac{\theta}{2}=\sin\frac{\varphi}{2}
\end{align*}
show that
\begin{align*}
\Big(\sin\frac{\theta}{2}\Big)^{\alpha+\frac12}\Big(\cos\frac{\theta}{2}\Big)^{\beta+\frac12}|P^{(\alpha,\beta)}_k(\cos\theta)|
=\Big(\sin\frac{\varphi}{2}\Big)^{\beta+\frac12}\Big(\cos\frac{\varphi}{2}\Big)^{\alpha+\frac12}|P^{(\beta,\alpha)}_k(\cos\varphi)|.
\end{align*}
The estimate already proved, now with $\alpha$ and $\beta$ interchanged, bounds the last expression by $\frac{C_{\alpha,\beta}}{\sqrt{k}}$. Hence the inequality \eqref{equivalent-form} holds for all $0<\theta<\pi$.
The proof of Lemma \ref{Bernstein-Lemma} is finished.
\hfill\hfill$\square$\vskip12pt
\subsection*{Acknowledgment}
\addcontentsline{toc}{section}{Acknowledgments} \hskip\parindent
W. Zhang was partially supported by the National Natural Science Foundation of China (Grant Nos. 12501301, 12671256).

\end{document}